\documentclass{amsart}
\usepackage[T1]{fontenc}
\usepackage{graphicx, amsmath, amsfonts, amsthm, bbding, stix, bbm, hyperref, apptools}
\usepackage[backend=biber]{biblatex}
\addbibresource{biblio.bib}
\usepackage{enumerate}
\usepackage{lipsum}
\usepackage{fancyhdr}
\newtheorem{theorem}[subsection]{Theorem}
\newtheorem{remark}[subsection]{Remark}
\usepackage{listofitems}
\newcommand\cycle[2][\,]{%
  \readlist\thecycle{#2}%
  (\foreachitem\i\in\thecycle{\ifnum\icnt=1\else#1\fi\i})%
}

\voffset = -20pt
\hoffset = -80pt
\textwidth = 520pt \textheight
=650pt \headheight = 8pt \headsep = 10pt

\title{A Sharp Higher Order Sobolev Inquality on Riemannian Manifolds}
\author{samuel zeitler}
\date{September 2024}
\thanks{This work is part of the author's thesis under the supervision of J. Vétois, submitted to McGill University on May 31, 2024.}
\newcommand{\bbR}{\ensuremath{\mathbb{R}}}
\newtheorem{lem}[subsection]{Lemma}
\AtAppendix{\counterwithin{subsection}{section}}

\begin{document}
\begin{abstract}
Let \( m, n \) be integers such that \( \frac{n}{2} > m \geq 1 \) and let \( (M, g) \) be a closed \( n-\)dimensional Riemannian manifold. We prove there exists some \( B \in \mathbb{R} \) depending only on \( (M, g) \), \( m \), and \( n \) such that for all \( u \in H_m^2(M) \),
\[ \lVert u \rVert_{2^\#}^2 \leq K(m,n) \int_M (\Delta^\frac{m}{2} u)^2 dv_g + B \lVert u \rVert_{H_{m-1}^2(M)}^2 \] 
where \( 2^\# = \frac{2n}{n-2m} \), \( K(m,n) \) is the square of the best constant for the embedding \( W^{m,2}(\bbR^n) \subset  L^{2^\#}(\bbR^n) \), \( H_m^2(M) \) is the Sobolev space consisting of functions on \( M \) with \( m \) weak derivatives in \( L^2(M) \), and \( \Delta^\frac{m}{2} = \nabla \Delta^{\frac{m-1}{2}} \) if \( m \) is odd. This inequality is sharp in the sense that \( K(m,n) \) cannot be lowered to any smaller constant. This extends the work of Hebey-Vaugon\cite{hebeyvaugon} and Hebey\cite{hebeysecond} which correspond respectively to the cases \( m=1 \) and \( m=2 \).
\end{abstract}
\maketitle

\section{Introduction}

In part due to applications to PDEs, geometry, and physics, sharp constants in Sobolev inequalities have been the subject of significant attention for many decades. The best constants for the first order Sobolev embeddings \( W^{1,p}(\bbR^n) \subset L^{p^*}(\bbR^n) \) where \( 1 < p < n \) and \( p^* = \frac{np}{n-p} \) were computed independently by Rodemich\cite{rodemich},  Aubin\cite{aubinsob}, and Talenti\cite{talentisob}. In the case \( p=2 \), this result was generalized to second order embeddings in Edmunds-Fortunato-Jannelli\cite{edmunds} and to all higher order by Swanson\cite{swanson}. In particular, they showed that if \( \frac{n}{2} > m \geq 1 \), \( 2^\# = \frac{2n}{n-2m} \), and
\[ K(m,n)^{-1} =  \inf_{u \in W^{m,2}(\bbR^n) \setminus \{0\}} \frac{\lVert \Delta^\frac{m}{2} u \rVert_{2}^2 }{ \lVert u \rVert_{2^\#}^2 }, \] then this value is exactly
\[ K(m,n) = \pi^{-m}\left( \frac{\Gamma(n)}{\Gamma(n/2)} \right)^\frac{2m}{n} \prod_{l=-m}^{m-1}(n+2l)^{-1}. \]

Assume \( \frac{n}{2} > m \geq 1 \) and let \( (M, g) \) be a closed \( n-\)dimensional Riemannian manifold. The following asymptotically sharp Sobolev inequality was proven for \( m=1 \) by Aubin\cite{aubinsob}, \( m=2 \) by Djadli-Hebey-Ledoux\cite{djadlihebeyledoux}, and \( m \geq 3 \) by Mazumdar\cite{mazumdargjms}: For all \( \epsilon > 0 \) there exists \( B_\epsilon \in \bbR \) such that for all \( u \in H_m^2(M) \),
\begin{equation}\label{asympsharpfirst} \lVert u \rVert_{2^{\#}}^2 \leq (K(m,n) +\epsilon) \int_M (\Delta^\frac{m}{2} u)^2 dv_g + B_\epsilon\lVert u \rVert_{H_{m-1}^2(M)}^2. \end{equation}
It was additionally shown that \( K(m,n) \) is the best possible value of \( A \) for which the inequality 
\[ \lVert u \rVert_{2^{\#}}^2 \leq A \int_M (\Delta^\frac{m}{2} u)^2 dv_g + B\lVert u \rVert_{H_{m-1}^2(M)}^2. \]
could hold (regardless of the value of \( B \)). In the case \( m=1 \), Aubin\cite{aubinsob} conjectured this best constant is attained (i.e. one can take \( B_\epsilon \) in \( \ref{asympsharpfirst} \) to be bounded as \( \epsilon \to 0 \)). This conjecture was solved by Hebey and Vaugon\cite{hebeyvaugon}. We also mention if additionally \( n \geq 6 \), a refinement of their inequality involving the scalar curvature was proven by Li and Ricciardi\cite{liricciardi}. The result of Hebey-Vaugon\cite{hebeyvaugon} was extended to \( m=2 \) in Hebey\cite{hebeysecond}. The following theorem extends this result to all \( m \geq 3 \).
\begin{theorem}
    Let \( m \geq 1 \) and \( (M, g) \) be a closed \( n-\)dimensional Riemannian manifold such that \( n > 2m \). Then there exists some \( B \in \mathbb{R} \) depending only on \( (M, g) \), \( m \), and \( n \) such that for all \( u \in H_m^2(M) \),
\begin{equation}\label{abhighersimp} \lVert u \rVert_{2^\#}^2 \leq K(m,n) \int_M (\Delta^\frac{m}{2} u)^2 dv_g + B \lVert u \rVert_{H_{m-1}^2(M)}^2. \end{equation}
\end{theorem}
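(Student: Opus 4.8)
The plan is to argue by contradiction, following the strategy of Hebey--Vaugon and Hebey but systematically replacing the second-order tools by their polyharmonic counterparts. For an integer $\alpha \geq 1$, set
\[ \lambda_\alpha = \inf\Bigl\{ K(m,n)\int_M (\Delta^{m/2}u)^2\,dv_g + \alpha\lVert u\rVert_{H_{m-1}^2(M)}^2 \ :\ u \in H_m^2(M),\ \lVert u\rVert_{2^\#} = 1 \Bigr\}. \]
The theorem is exactly the assertion that $\lambda_\alpha \geq 1$ for some $\alpha$, so assume $\lambda_\alpha < 1$ for every $\alpha$. Since $\lambda_\alpha < 1$, the standard criterion for attainment below the Euclidean threshold — which uses the asymptotically sharp inequality \eqref{asympsharpfirst} (together with a Brezis--Lieb decomposition and the elementary fact $t^p+(1-t)^p\geq 1$ for $p=2/2^\#<1$) to force a minimizing sequence to be compact — produces a minimizer $u_\alpha$, normalized by $\lVert u_\alpha\rVert_{2^\#} = 1$, solving (up to a fixed positive factor) the Euler--Lagrange equation $K(m,n)\Delta^m u_\alpha + \alpha\mathcal{L}u_\alpha = \lambda_\alpha |u_\alpha|^{2^\#-2}u_\alpha$, where $\mathcal{L}$ is the elliptic self-adjoint operator with $\langle \mathcal{L}u,u\rangle_{L^2} = \lVert u\rVert_{H_{m-1}^2(M)}^2$; elliptic regularity makes $u_\alpha$ smooth. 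From $\alpha\lVert u_\alpha\rVert_{H_{m-1}^2}^2 \leq \lambda_\alpha < 1$ together with the elliptic estimate $\lVert u\rVert_{H_m^2}^2 \leq C(\lVert \Delta^{m/2}u\rVert_2^2 + \lVert u\rVert_{H_{m-1}^2}^2)$, the sequence $(u_\alpha)$ is bounded in $H_m^2(M)$ with $\lVert u_\alpha\rVert_{H_{m-1}^2} \to 0$; hence, along a subsequence, $u_\alpha \rightharpoonup 0$ weakly in $H_m^2(M)$ and strongly in $H_{m-1}^2(M)$. Inserting $u = 0$ into \eqref{asympsharpfirst} then gives $\lambda_\alpha \to 1$ and $\int_M (\Delta^{m/2}u_\alpha)^2\,dv_g \to K(m,n)^{-1}$, so $(u_\alpha)$ concentrates.

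The second step is the concentration analysis. Regarding $|u_\alpha|^{2^\#}\,dv_g$ and $(\Delta^{m/2}u_\alpha)^2\,dv_g$ as Radon measures with weak-$*$ limits $\mu$ and $\nu$, the higher-order concentration--compactness lemma (whose concentration estimate is precisely Swanson's sharp inequality with constant $K(m,n)$) gives $\mu = \sum_j \mu_j\delta_{x_j}$ and $\nu \geq K(m,n)^{-1}\sum_j \mu_j^{2/2^\#}\delta_{x_j}$, using $u_\alpha \rightharpoonup 0$. Since $\sum_j \mu_j = \mu(M) = 1$, $\nu(M) = K(m,n)^{-1}$, and $2/2^\# < 1$, the chain $K(m,n)^{-1} = \nu(M) \geq K(m,n)^{-1}\sum_j \mu_j^{2/2^\#} \geq K(m,n)^{-1}\sum_j \mu_j = K(m,n)^{-1}$ is an equality, which is possible only if there is a single atom: there is $x_0 \in M$ with $|u_\alpha|^{2^\#}\,dv_g \rightharpoonup \delta_{x_0}$ and $(\Delta^{m/2}u_\alpha)^2\,dv_g \rightharpoonup K(m,n)^{-1}\delta_{x_0}$, and no mass or energy is left over on the rest of $M$.

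The third step, and the crux of the argument, is a blow-up analysis at $x_0$ together with sharp pointwise control of $u_\alpha$. Picking $x_\alpha \to x_0$ near which $|u_\alpha|$ is essentially maximal and letting $\mu_\alpha \to 0$ be the natural scale, the rescalings $v_\alpha(y) = \mu_\alpha^{(n-2m)/2}u_\alpha(\exp_{x_\alpha}(\mu_\alpha y))$ solve on exhausting balls an equation that converges, the dilated metrics $\mu_\alpha^{-2}(\exp_{x_\alpha})^*g$ tending to the Euclidean one in $C^k_{\mathrm{loc}}$, to $\Delta^m U = c_{m,n}|U|^{2^\#-2}U$ on $\bbR^n$; bootstrapping yields $v_\alpha \to U$ in $C^k_{\mathrm{loc}}$, with $U$ a nonzero extremal of Swanson's inequality, hence $U(y) = c_{m,n}(1+|y|^2)^{-(n-2m)/2}$ up to sign, scaling, and translation. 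The obstacle is to promote this to global control: one needs that $u_\alpha \to 0$ in $C^k$ on $M \setminus B_\delta(x_0)$ (no secondary concentration, with a rate) and, more importantly, sharp pointwise bubble bounds $|\nabla^j u_\alpha(x)| \leq C\,\mu_\alpha^{(n-2m)/2}(\mu_\alpha + |x|)^{-(n-2m)-j}$ near $x_0$. For $m = 1,2$ these rest on maximum principles and on positivity of the Green's functions of $\Delta$ and $\Delta^2$, which fail for $\Delta^m$ when $m \geq 3$; the replacement must be assembled from $L^p$ and weighted elliptic estimates for the polyharmonic operator on $M$, the explicit decay of $U$ and of the polyharmonic fundamental solution on $\bbR^n$, and the classification and nondegeneracy of the Euclidean extremals, together with localization lemmas adapted to the commutator structure of $\Delta^{m/2}$ (in place of the product rule for $\nabla$). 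This is where essentially all of the new work for $m \geq 3$ is concentrated.

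The fourth step closes the argument. Fix a small $\delta > 0$ and a cutoff $\eta$ supported in a $g$-normal chart $B_\delta(x_0)$ with $\eta \equiv 1$ near $x_0$. Transplanting $\eta u_\alpha$ to $\bbR^n$ and applying Swanson's sharp inequality gives $\lVert \eta u_\alpha\rVert_{2^\#}^2 \leq K(m,n)\int_{\bbR^n}(\Delta_{\mathrm{eucl}}^{m/2}(\eta u_\alpha))^2\,dx$. Two families of error terms arise. First, $1 - \lVert \eta u_\alpha\rVert_{2^\#}^2 \leq \int_{M\setminus B_\delta}|u_\alpha|^{2^\#}$, which the step-three control of $u_\alpha$ away from $x_0$ bounds by $C(\delta)\lVert u_\alpha\rVert_{H_{m-1}^2(M)}^2$ for $\alpha$ large. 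Second, passing $\eta$ through $\Delta^{m/2}$ and replacing $\Delta_{\mathrm{eucl}}^{m/2}$ by $\Delta_g^{m/2}$ on $B_\delta$ — where $g_{ij} = \delta_{ij} + O(|x|^2)$ — costs terms bounded by $C\int_{B_\delta}|x|^2(\Delta^{m/2}u_\alpha)^2\,dv_g + C(\delta)\sum_{j=0}^{m-1}\int_{B_\delta}|\nabla^j u_\alpha|^2\,dv_g$, and the sharp bubble estimates show the weighted term $\int_{B_\delta}|x|^2(\Delta^{m/2}u_\alpha)^2\,dv_g$ is itself $\leq C\lVert u_\alpha\rVert_{H_{m-1}^2(M)}^2$, both sides being comparable powers of $\mu_\alpha$ — the borderline dimensions $n = 2m+1$ and $n = 2m+2$ requiring a slightly finer accounting with logarithmic factors. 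Putting this together yields
\[ 1 \leq K(m,n)\int_M (\Delta^{m/2}u_\alpha)^2\,dv_g + C'\lVert u_\alpha\rVert_{H_{m-1}^2(M)}^2 \]
with $C'$ depending only on $(M,g)$, $m$, $n$ (and the once-and-for-all choice of $\delta$). Choosing $\alpha > C'$ contradicts $\lambda_\alpha < 1$ — note $\lVert u_\alpha\rVert_{H_{m-1}^2(M)} > 0$, since $u_\alpha \not\equiv 0$ — and the theorem follows.
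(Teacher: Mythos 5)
Your outline follows the same broad contradiction-and-blow-up strategy as the paper, but it diverges from the paper's proof in a way that leaves a genuine and acknowledged gap, and the divergence is precisely what makes that gap hard to fill.

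The paper does not minimize the functional $K\int_M(\Delta^{m/2}u)^2\,dv_g + \alpha\lVert u\rVert_{H_{m-1}^2}^2$ as you do; it minimizes the quadratic form $\int_M u(\Delta+\alpha)^m u\,dv_g$ on the unit $L^{2^\#}$-sphere. This is not a cosmetic change. The resulting Euler--Lagrange equation is $(\Delta+\alpha)^m u_\alpha = \lambda_\alpha u_\alpha^{2^\#-1}$, and the whole point of this choice is that the operator $(\Delta+\alpha)^m$ \emph{factors} as an $m$-fold composition of the second-order operator $\Delta+\alpha$. Writing $v_\alpha^{(k)} = (\Delta+\alpha)^k u_\alpha$, one obtains a cascade of second-order equations $\Delta v_\alpha^{(k)} + \alpha v_\alpha^{(k)} = v_\alpha^{(k+1)}$ with positive top-order data $u_\alpha^{2^\#-1}$, and iterating the scalar maximum principle for $\Delta+\alpha$ gives $v_\alpha^{(k)}\geq 0$ for all $k$. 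This positivity is the engine driving essentially all of the paper's Step 1 (the $L^2$ bounds on $\tilde{v}_\alpha^{(k)}$ via Lemma A.8, which require the sign hypothesis), Step 2 (the pointwise decay estimate, which uses the exponential-chart Laplacian comparison and the sign of $\Delta_g u_\alpha$), and Step 3 (the $L^2$ concentration, again through the $v_\alpha^{(k)}$). Your equation $K\Delta^m u_\alpha + \alpha\mathcal{L}u_\alpha = \lambda_\alpha|u_\alpha|^{2^\#-2}u_\alpha$ has none of this structure: $\Delta^m$ has no maximum principle for $m\geq 3$, the additive $\alpha\mathcal{L}$ term does not combine with $\Delta^m$ into anything factorable, and there is no obvious mechanism to even assert $u_\alpha\geq 0$ (you consistently write $|u_\alpha|$, implicitly conceding this).

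You do flag this difficulty in your third step --- ``for $m=1,2$ these rest on maximum principles and on positivity of the Green's functions of $\Delta$ and $\Delta^2$, which fail for $\Delta^m$ when $m\geq 3$'' --- and then gesture at ``$L^p$ and weighted elliptic estimates, ... explicit decay of $U$ and of the polyharmonic fundamental solution, ... classification and nondegeneracy of the Euclidean extremals'' without carrying any of that out. That list is, roughly, a description of the independent route taken by Carletti (cited by the paper), which goes through pointwise Green's function estimates for $(\Delta+\alpha)^m$. The paper itself takes a different and in some ways more elementary route: it sidesteps Green's function analysis entirely by choosing the minimization functional so that the equation factors into second-order pieces. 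That choice is the decisive idea you are missing, and without it (or a full substitute along Carletti's lines) your Step 3 is a genuine gap, not a routine generalization. One further discrepancy: your Step 4 bounds the weighted term $\int|x|^2(\Delta^{m/2}u_\alpha)^2$ via ``sharp bubble estimates'' and anticipates logarithmic corrections in the borderline dimensions $n=2m+1, 2m+2$; the paper instead converts that integral into $\int \eta_\alpha^2 r_\alpha^2 u_\alpha^{2^\#}$ by integrating the equation against $\eta_\alpha^2 r_\alpha^2 u_\alpha$, bounds it by Hölder using only the Step~2 estimate $r_\alpha u_\alpha^{2/(n-2m)}\leq C$, and closes with the subcritical Sobolev embedding $H_{m-1}^2\hookrightarrow L^{2n/(n-2(m-1))}$, uniformly in $n$ and with no logarithmic issues.
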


\begin{remark}
    Independently from our work, Theorem 1.1 has also been obtained in a recent article by Carletti\cite{carletti1}. The method of Carletti relies on a delicate a priori asymptotic description of solutions of equation (4) below, as well as on pointwise estimates of Green's functions for the operator \( (\Delta + \alpha)^m \) obtained by the same author in \cite{carletti2}. We believe both methods of proof have their independent advantages and merits.
\end{remark}

Section 2 is devoted to the proof of Theorem 1. Our proof adapts the method of Hebey\cite{hebeysecond} in the second order case. The argument is by contradiction and proceeds with a concentration point analysis of a sequence of smooth positive solutions \( u_\alpha \) to the PDE \( (\Delta_g + \alpha)^m u = \lambda_\alpha u^{2^{\#} - 1} \) for \( \alpha > 0 \) and \( 0 < \lambda_\alpha \to \frac{1}{K(m,n)} \) as \( \alpha \to \infty \). This general method for proving sharp Sobolev inequalities is by now classical, other examples can be found in Hebey-Vaugon\cite{hebeyvaugon}, Druet\cite{druetbest}, Aubin-Li\cite{aubinli}, Li-Ricciardi\cite{liricciardi}, and Li-Zhu\cite{lizhu}\cite{lizhu2}. We show the functions \( u_\alpha \) concentrate at one unique point \( x_0 \) up to a subsequence and decrease sufficiently fast away from their maximum points \( x_\alpha \) (see Step 2 in Section 2.1) to obtain a contradiction to the sharp Euclidean Sobolev inequality.

We note some arguments in the proof which are unique to the higher order case. In Step 1, we require \( L^2 \) bounds on functions \( \tilde{v}_\alpha^{(k)} \) defined in \( (\ref{vdef}) \) for \( 1 \leq k \leq m-1 \). In order to obtain these estimates, we use separate arguments for the cases \( k \leq \lfloor \frac{m}{2} \rfloor \) and \( k > \lfloor \frac{m}{2} \rfloor \), where if \( m=2 \) only the first case is relevant. In Step 3, an additional argument is needed to obtain the estimate \[ \int_{M \setminus B_{x_0}( \delta)} (\Delta^\frac{k}{2} u_\alpha)^2 dv_g \leq C \alpha^k \int_{M \setminus B_{x_0}( 2^{-(2k+1)}\delta)} u^2 dv_g \]  for \( 1 \leq k \leq m-1 \), compared to the case \( m=2 \) where this is equivalent to \[ \int_{M \setminus B_{x_0}( \delta)} |\nabla u_\alpha|^2 dv_g \leq C\alpha^{m-1} \int_{M \setminus B_{x_0}( \delta/8)} dv_g. \]

One of the difficulties in adapting the argument for \( m=1 \) and \( m=2 \) to prove Theorem 1.1 is the rapidly increasing number of terms present in the calculations as \( m \) increases, which must be managed by careful inductions. To address this technical aspect of the problem, Appendix A contains several lemmas which are used frequently throughout the proof in Section 2. The majority of the lemmas are centered around the ability to integrate by parts expressions such as
\( \int_M \eta \Delta^{i_1} u \Delta^{i_2} u dv_g \)
and expand Laplacians of products in expressions such as
\( \int_M (\Delta^i (\eta u))^2 dv_g \) for \( u \in H_m^2(M) \), where \( \eta \) is some fixed smooth function.

\section{Proof of Theorem 1.1}

In this section, all unlabeled Laplacians are assumed to be with respect to the Riemannian metric \( g \) and all integrals without volume elements are assumed to be with respect to the Riemannian volume element \( dv_g \). We occasionally choose to label them anyway, in particular during portions of the argument when other metrics are involved in order to reduce confusion. To ease notation we refer to the constant \( K(m,n) \) as \( K \).

Suppose for all \( B > 0 \), inequality \( (\ref{abhighersimp}) \) is false. Then if we let \( \mathcal{N} \coloneq \{ u \in H_m^2(M) : \int_M |u|^{2^\#} = 1 \} \), for all \( \alpha > 0 \) we have
  \begin{equation}\label{infless} \lambda_\alpha \coloneq \inf_{u \in \mathcal{N} } \int_M u (\Delta + \alpha)^m u   < \frac{1}{K}.  \end{equation}
where we interpret the integral in the sense of distributions and use the negative Laplacian convention \( \Delta = -\nabla^i \nabla_i \). The sequence \( \lambda_\alpha \) is clearly nondecreasing and so converges to some \( \lambda \leq \frac{1}{K} \). Then by Mazumdar\cite{mazumdargjms}, for all \( \alpha > 0 \) there exists a \( C^{2m}(M) \) solution  \( u_\alpha \in \mathcal{N} \) to the PDE
\begin{equation*}
    (\Delta + \alpha)^m u_\alpha = \lambda_\alpha |u_\alpha|^{2^\#-2}u_\alpha. \end{equation*}
By making trivial adaptations to Proposition 4.1 in Robert\cite{robertfourthorder}, \( u_\alpha \) can be taken to be smooth and positive and therefore solves
  \begin{equation}\label{originalpde} (\Delta + \alpha)^m u_\alpha = \lambda_\alpha u_\alpha^{2^\#-1}. \end{equation}
  Independently, by Mazumdar\cite{mazumdargjms}, for all \( \epsilon > 0 \) there exists \( B_\epsilon \) such that for all \( u \in H_m^2(M) \), 
 \begin{equation}\label{asympsharp} \left( \int_M |u|^{2^\#} \right)^\frac{2}{2^\#}  \leq (K+\epsilon) \int_M (\Delta^\frac{m}{2} u)^2  + B_\epsilon\lVert u \rVert_{H_{m-1}^2}. \end{equation}
 Therefore, for all \( \epsilon \) there exists \( \alpha_\epsilon \) large such that
 \[ \frac{1}{(1+\epsilon)K} \leq \inf_{u \in \mathcal{N} } \int_M u (\Delta + \alpha_\epsilon)^m u . \]
 Combining this with \( (\ref{infless}) \) shows \( \lambda = \frac{1}{K} \).
We note that \( \lVert u_\alpha \rVert_{H_{m-1}^2} \to 0 \) as \( \alpha \to \infty \) while \( \lVert u_\alpha \rVert_{2^{\#}} = 1 \). Then because
 \[ \int_M u^{2^\#}_\alpha dv_g \leq (\sup_M u_\alpha)^{2^\#-2} \int_M u_\alpha^2 \]
 we must have \( \sup_M u_\alpha \to \infty \) as \( \alpha \to \infty \). Let \( x_\alpha \in M \) be such that \( u_\alpha(x_\alpha) \) is maximum. Taking a subsequence of \( \alpha \) we assume \( x_\alpha \to x_0 \in M \) and \( u_\alpha(x_\alpha) \) increases to infinity.

 \subsection{Asymptotic Analysis}

  We say a point \( y \in M \) is a concentration point for the sequence \( u_\alpha \) if for all \( \delta > 0 \)
 \[ \limsup_{\alpha \to \infty} \int_{B_y(\delta)} u^{2^{\#}}_\alpha > 0 \]

 \textbf{Step 1:} Up to a subsequence, \( u_\alpha \) has one unique concentration point.

 Let us define \( \mu_\alpha = u_\alpha(x_\alpha)^{-\frac{2}{n-2m}}\). Let \( i_g \) be the injectivity radius for \( (M, g) \). We define sequences of functions \( \tilde{u}_\alpha\) and metrics \( \tilde{g}_\alpha \) on \( B_0(i_g/\mu_\alpha) \) by

 \[ \tilde{u}_\alpha(x) = \mu_\alpha^{\frac{n-2m}{2}} u_\alpha( \text{exp}_{x_\alpha}(\mu_\alpha x) ) \text{ and } \tilde{g}_\alpha(x) = (\text{exp}^*_{x_\alpha}g)(\mu_\alpha x ). \]
We note \( \bigcup_\alpha B_0(i_g/\mu_\alpha) = \mathbb{R}^n \).
The functions \( \tilde{u}_\alpha \) are bounded in \( C^0(\mathbb{R}^n) \) with \( \sup \tilde{u}_\alpha = \tilde{u}(0) = 1 \) and they satisfy the PDE

 \[ (\Delta_{\tilde{g}_\alpha} + \alpha \mu_\alpha^2)^m \tilde{u}_\alpha = \lambda_\alpha \tilde{u}_\alpha^{2^{\#}-1} .\]

We would like to show these functions are uniformly bounded in \( C^{2m,\beta}(K) \) for \( K \) compact and therefore converge to a limit function \( \tilde{u} \). To show boundedness in \( C^{2m,\beta}(K) \), it suffices to show \( \alpha \mu_\alpha^2 \) is bounded as \( \alpha \to \infty \) and apply standard regularity theory as seen in Gilbarg-Trudinger\cite{gt}. Suppose up to a subsequence \( \alpha \mu_\alpha^2 \to \infty \) as \( \alpha \to \infty \). 
Let us define for \( 0 \leq k \leq m \) functions \( \tilde{v}_\alpha^{(k)} \) and \( v_\alpha^{(k)} \) on \( B_0(i_g/\mu_\alpha) \) and \( M \) respectively by
\begin{equation}\label{vdef} 
\tilde{v}_\alpha^{(k)} = (\Delta_{\tilde{g}_\alpha} + \alpha \mu_\alpha^2)^k \tilde{u}_\alpha = \sum_{i=0}^k c_{i,k} \alpha^{k-i}\mu_\alpha^{2(k-i)} \Delta_{\tilde{g}_\alpha}^i \tilde{u}_\alpha \end{equation}
and
\[
v_\alpha^{(k)} = (\Delta_{{g}} + \alpha)^k {u}_\alpha = \sum_{i=0}^k c_{i,k} \alpha^{k-i} \Delta_g^i u_\alpha
\] where \( c_{i,k} \) are defined the same as in Lemma A.7.

We obtain systems of PDEs
\begin{equation}\label{systemtilde} \begin{cases}
    \Delta_{\tilde{g}_\alpha} \tilde{v}^{(k)}_\alpha + \alpha \mu_\alpha^2 \tilde{v}^{(k)}_\alpha = \tilde{v}^{(k+1)}_\alpha & \text{if $0 \leq k\leq m-2$}  \\
    \Delta_{\tilde{g}_\alpha} \tilde{v}^{(m-1)}_\alpha + \alpha \mu_\alpha^2\tilde{v}^{(m-1)}_\alpha = \tilde{u}^{2^{\#} - 1}_\alpha 
\end{cases} \end{equation}
and
\begin{equation}\begin{cases}\label{systemwithouttilde}
    \Delta_g v^{(k)}_\alpha + \alpha v^{(k)}_\alpha = v^{(k+1)}_\alpha & \text{if $0 \leq k\leq m-2$}\\
    \Delta_g v^{(m-1)}_\alpha + \alpha v^{(m-1)}_\alpha = u^{2^{\#} - 1}_\alpha .
\end{cases} \end{equation}
Because \( u^{2^{\#} - 1}_\alpha \geq 0 \), iterating the maximum principle we obtain \( v^{(k)}_\alpha \geq 0 \) for all \( 1 \leq k \leq m \). This then shows for \( x \in B_0(i_g/\mu_\alpha) \) \[ \tilde{v}^{(k)}_\alpha(x) = \mu_\alpha^{\frac{n}{2}-(m-2k)} v^{(k)}_\alpha(exp_{x_\alpha} (\mu_\alpha x)) \geq 0 \] for all \( 1 \leq k \leq m \).

We now show \( \tilde{v}^{(k)}_\alpha \) is bounded in \( L^2(B_0(R)) \) for fixed \( R > 0 \) as \( \alpha \to \infty \) for any \( 1 \leq k \leq m-1 \). We perform the proof of this claim through the following (strong) induction:
\begin{enumerate}[(i)]
    \item If \( k \leq \lfloor \frac{m}{2} \rfloor \) then \( \int_{B_0(R)} (v_\alpha^{(k)})^2 dv_{\tilde{g}_\alpha} \leq C \).
    \item If \( \lfloor \frac{m}{2} \rfloor \leq k \leq m-2 \) and \( \int_{B_0(R)} (v_\alpha^{(i)})^2 dv_{\tilde{g}_\alpha} \leq C \) for \( i \leq k \), then \( \int_{B_0(R-2)} (v_\alpha^{(k+1)})^2 dv_{\tilde{g}_\alpha} \leq C\).
\end{enumerate}

Let \( k \leq \lfloor \frac{m}{2} \rfloor \). 
Integrating by parts gives
\[ \int_M (v^{(k)}_\alpha)^2 dv_g = \int_M \sum_{i=0}^{2k} d_{i,k} \alpha^{2k-i} u_\alpha \Delta_g^i u_\alpha dv_g \leq C \int_M \sum_{k=0}^{2k} \alpha^{2k-i} u_\alpha \Delta_g^i u_\alpha dv_g \]
where \( d_{i,k} = \sum_{j_1 + j_2 = i} c_{j_1,k}c_{j_2, k} \) and we use the fact that for all integers \( k \geq 0 \) and \( u \in H_{2k}^2 \),
\[ \int_M u \Delta_g^k u dv_g = \int_M (\Delta_g^\frac{k}{2}u)^2 dv_g \geq 0.\] 
Therefore, using the PDE we obtain
\[  \alpha^{m-2k} \int_M (v^{(k)}_\alpha)^2 dv_g\leq C \int_M \sum_{k=0}^{2k} \alpha^{m-i} u_\alpha(\Delta_g^i u_\alpha)dv_g \leq C \int_M u_\alpha (\Delta_g + \alpha)^m u_\alpha dv_g = C \int_M u_\alpha^{2^\#} dv_g \leq C  \]
We therefore have by a change of variable
\begin{align*} \int_{B_0(R)} (\tilde{v}_\alpha^{(k)})^2 dv_{\tilde{g}_\alpha} &= \int_{B_0(R)} \mu_\alpha^{n-2(m-2k)} (v^{(k)}_\alpha(exp_{x_\alpha} (\mu_\alpha x)))^2 dv_{\tilde{g}_\alpha} \\
&= \int_{B_{x_\alpha}(R\mu_\alpha)} \mu^{-2(m-2k)}_\alpha (v^{(k)}_\alpha)^2 dv_g \\
&\leq C \mu^{-2(m-2k)}_\alpha \alpha^{-(m-2k)}\\
& \leq C  \end{align*}
and we have proven (i).

Now let us define for \( 1 \leq i \leq 2(m-1) \) functions \( w^{(i)}_\alpha \) by
\[ \begin{cases}
    w^{(i)}_\alpha = \left(\tilde{v}^{\left(\frac{i}{2}\right)}_\alpha\right)^2 & \text{ if }  i \text{ is even} \\
     w^{(i)}_\alpha = \tilde{v}^{\left(\frac{i}{2} - \frac{1}{2}\right)}_\alpha \tilde{v}^{\left(\frac{i}{2} + \frac{1}{2} \right)}_\alpha & \text{ if }  i \text{ is odd}. \\
\end{cases} \]

Let \( \lfloor \frac{m}{2} \rfloor \leq k \leq m-2 \) and let \( \eta \) be a smooth nonnegative function such that \( \eta = 1 \) on \( B_0(R-1) \) and \( \eta = 0 \) on \( \mathbb{R}^n \setminus B_0(R- \frac{1}{2}) \). Assume \( \int_{B_0(R)} (v_\alpha^{(l)})^2 dv_{\tilde{g}_\alpha} \leq C \) for \( l \leq k \). This is equivalent to \( \int_{B_0(R)} w_\alpha^{(i)} dv_{\tilde{g}_\alpha} \leq C \) for \( i \leq 2k \). Because \( k \geq \lfloor \frac{m}{2} \rfloor \) there exists a nonnegative integer \( s \) such that \( m + s = 2k+1 \). Additionally because \( k \leq m-2 \) we must have \( s+1 \leq k \). Then applying Lemma A.8 with \( r = \frac{1}{2} \) gives

\[ \int_{B_0(R-1)} \tilde{v}^{(k+1)}_\alpha \tilde{v}^{(k)}_\alpha dv_{\tilde{g}_\alpha} \leq \int_{B_0(R - \frac{1}{2})} \eta \tilde{v}^{(k+1)}_\alpha\tilde{v}^{(k)}_\alpha dv_{\tilde{g}_\alpha} \leq \int_{B_0(R- \frac{1}{2})} \eta \tilde{v}^{(m)}_\alpha \tilde{v}^{(s)}_\alpha dv_{\tilde{g}_\alpha} + C \int_{B_0(R)} \sum_{i=0}^{2k} w^{(i)}_\alpha dv_{\tilde{g}_\alpha} . \]
We note because \( \tilde{g}_\alpha  \) converges to the Euclidean metric uniformly on compact subsets (and its derivatives up converge to \( 0 \) uniformly on compact subsets), the constant \( C \) above is indeed independent of \( \alpha \). Substituting definitions of \( w^{(i)}_\alpha \), applying Cauchy-Schwarz, recognizing \( \tilde{v}^{(m)}_\alpha = \tilde{u}^{2^\#-1}_\alpha \leq 1 \), and using the fact that \( s \leq k \) gives
\[ \int_{B_0(R-1)} w^{(2k+1)} dv_{\tilde{g}_\alpha} \leq \left(\int_{B_0(R)} (\tilde{v}^{(m)}_\alpha)^2 dv_{\tilde{g}_\alpha} \right)^\frac{1}{2} \left( \int_{B_0(R)} (w^{(2s)}_\alpha) dv_{\tilde{g}_\alpha} \right)^\frac{1}{2} +  C \int_{B_0(R)} \sum_{i=0}^{2k} w^{(i)}_\alpha dv_{\tilde{g}_\alpha} \leq C. \]
Therefore \( \int_{B_0(R-1)} w^{(i)} dv_{\tilde{g}_\alpha} \leq C \) for \( i \leq 2k+1 \). Now let \( \eta \) be a smooth function such that \( \eta = 1 \) on \( B_0(R-2) \) and \( \eta = 0 \) on \( \mathbb{R}^n \setminus B_0(R- \frac{3}{2}) \). Again applying Lemma A.8 with \( r = \frac{1}{2} \),
\[ \int_{B_0(R-2)} (\tilde{v}^{(k+1)}_\alpha)^2 dv_{\tilde{g}_\alpha} \leq \int_{B_0(R- \frac{3}{2})} \eta (\tilde{v}^{(k+1)}_\alpha)^2 dv_{\tilde{g}_\alpha} \leq \int_{B_0(R- \frac{3}{2})} \eta \tilde{v}^{(m)}_\alpha \tilde{v}^{(s+1)}_\alpha dv_{\tilde{g}_\alpha} + C \int_{B_0(R-1)} \sum_{i=0}^{2k+1} w^{(i)}_\alpha dv_{\tilde{g}_\alpha} . \]
Then we once again apply Cauchy-Schwarz, \(  \tilde{v}^{(m)}_\alpha \leq 1 \) and the fact that \( s+1 \leq k \) to obtain
\[ \int_{B_0(R-2)} (\tilde{v}^{(k+1)}_\alpha)^2 dv_{\tilde{g}_\alpha} \leq C \]
and we have shown \( (ii) \), therefore for all \( 1 \leq k \leq m-1 \), \( \int_{B_0(R)} (\tilde{v}^{(k)}_\alpha)^2 dv_{\tilde{g}_\alpha} \leq C \).

We now consider \( \tilde{v}^{(m-1)}_\alpha \) which satisfies the last PDE in the system \( (\ref{systemtilde}) \).
 Because \( \tilde{v}^{(m-1)}_\alpha \) is positive and bounded in \( L^2(B_0(R)) \) and \( u^{2^\# - 1}_\alpha  \) is bounded in \( C^0(B_0(R)) \), we get \( \tilde{v}^{(m-1)}_\alpha\) is bounded in \( C^0(B_0(R-1)) \) by the De Georgi-Nash-Moser iteration scheme. We note because \( \tilde{g}_\alpha \) converges to the Euclidean metric uniformly on compact sets, the constant \( C \) in the De Georgi-Nash-Moser iterative scheme will indeed be independent of \( \alpha \).
Then for \( 1 \leq k \leq m-2 \), we consider \( v^{(k)}_\alpha \) in the system \( (\ref{systemtilde}) \) and
apply the De Georgi-Nash-Moser iteration scheme again. If \( \tilde{v}^{(k+1)}_\alpha \) is bounded in \( C^0(B_0(R)) \) and \( \tilde{v}^{(k)}_\alpha \) is bounded in \( L^2(B_0(R)) \) then \( \tilde{v}^{(k)}_\alpha \) is bounded in \( C^0(B_0(R-1)) \).
Therefore by induction \( \tilde{v}^{(k)}_\alpha \) is \( C^0 \) bounded on compact sets for \( 1 \leq k \leq m \). We then take the particular equation from \( (\ref{systemtilde}) \)
\[ \Delta_{\tilde{g}_\alpha} \tilde{u}_\alpha + \alpha \mu^2_\alpha \tilde{u}_\alpha = \tilde{v}^{(1)}_\alpha. \] Because \( \Delta_{\tilde{g}_\alpha} \tilde{u}_\alpha(0) \geq 0 \), \( \tilde{u}_\alpha(0) = 1 \), and \( \tilde{v}_\alpha^{(1)}(0) \leq C \) we obtain that \( \alpha \mu^2_\alpha \) is in fact bounded. We then apply standard regularity theory to see \( \tilde{u}_\alpha \) are uniformly bounded in \( C_{2m}^\beta(K) \)  for \( K \) compact, and therefore \( \tilde{u}_\alpha  \) will converge in \( C_{2m}(\mathbb{R}^n) \) to a limit function \( \tilde{u} \) satisfying \( 0 \leq \tilde{u} \leq 1 \) and \( \tilde{u}(0) = 1 \).

Independently, by integrating by parts, substituting our PDE, and using the fact that \( \int_M u_\alpha^{2^\#} dv_g = 1 \) we have

\begin{equation}\label{ibptosobolev} \sum_{i=0}^m \int_M c_{i,m}\alpha^{m-i}(\Delta^{\frac{i}{2}}u_\alpha)^2 dv_g = \int_M u_\alpha (\Delta + \alpha)^m u_\alpha dv_g  \leq \frac{1}{K}\int_M u_\alpha^{2^\#} dv_g = \frac{1}{K}\left(\int_M u_\alpha^{2^\#} dv_g\right)^\frac{2}{2^\#}.  \end{equation}
We note by the inequalities above and the fact that \( c_{m,m} = 1 \) we have 
\begin{equation}\label{boundonderiv} \lVert \Delta^{\frac{m}{2}}  u_\alpha \rVert_2^2 \leq \frac{1}{K} \lVert u_\alpha \rVert_{2^\#}^2 = \frac{1}{K}. \end{equation}
Given \( \epsilon > 0 \), we apply \( (\ref{ibptosobolev}) \), the asymptotically sharp Sobolev inequality \( (\ref{asympsharp})\), and \( (\ref{boundonderiv}) \) to obtain
\[  \sum_{i=0}^m \int_M c_{i,m}\alpha^{m-i}(\Delta^{\frac{i}{2}}u_\alpha)^2 dv_g \leq \lVert \Delta^{\frac{m}{2}} u_\alpha \rVert_2^2 +  \frac{\epsilon}{K} \lVert \Delta^{\frac{m}{2}}  u_\alpha \rVert_2^2 + \frac{B_\epsilon}{K} \lVert u_\alpha \rVert_{H_{m-1}^2} \leq \lVert \Delta^{\frac{m}{2}} u_\alpha \rVert_2^2 +  \frac{\epsilon}{K^2} + \frac{B_\epsilon}{K} \lVert u_\alpha \rVert_{H_{m-1}^2}.  \]Then because \( c_{0,m}=1=c_{m,m} \) we have for sufficiently large \( \alpha \)
\[ \alpha^m \int_M u_\alpha^2 dv_g \leq \alpha^m \int_M u_\alpha^2 dv_g + \sum_{i=1}^{m-1} \int_M (c_{i,m}\alpha^{m-i}-\frac{B_\epsilon}{K})(\Delta^{\frac{i}{2}}u_\alpha)^2 dv_g \leq \frac{\epsilon}{K^2} +  \frac{B_\epsilon}{K} \int_M u^2_\alpha dv_g = \frac{\epsilon}{K^2}+ o(1)  \]
where \( o(1) \to 0 \) as \( \alpha \to \infty \).
Therefore by letting \( \epsilon \to 0 \) we see \[ \alpha^m \int_M u_\alpha^2 dv_g \to 0  \text{ as } \alpha \to \infty.\]

Because \( \tilde{u} \) is continuous and \( \tilde{u}(0) = 1 \), \( \int_{B_0(1)} \tilde{u}^2 dx > 0 \). Thus by the uniform convergence of \( \tilde{u}_\alpha \to \tilde{u} \) as \( \alpha \to \infty \) on compact sets, there exists some \( c > 0 \) independent of \( \alpha \) such that \( \int_{B_0(1)} \tilde{u}_\alpha^2 \geq c \). Therefore given \( \delta > 0 \) small and fixed, for sufficiently large \( \alpha \) by change of variable
\[ \alpha^m \int_{B_{x_\alpha(\delta)}} u_\alpha^2 dv_g = \alpha^m \mu_\alpha^{2m} \int_{B_0(\frac{\delta}{\mu_\alpha})} \tilde{u}_\alpha^2 dv_{g_\alpha} \geq \alpha^m \mu_\alpha^{2m} \int_{B_0(1)} \tilde{u}_\alpha^2 \geq c \alpha^m \mu_\alpha^{2m} \] 
which implies \( \alpha \mu_\alpha^{2} \to 0 \) as \( \alpha \to \infty \). Hence, passing to the limit, \( \tilde{u} \) is a \( C^{2m}(\mathbb{R}^n) \) nonnegative solution to the PDE 
\[ \Delta_\xi^m \tilde{u} = \frac{1}{K} \tilde{u}^{2^\#-1}. \]
in the Euclidean metric.
By the Euclidean sharp Sobolev inequality (as stated in Lions\cite{lions}) we have
\[ \left( \int_{\mathbb{R}^n} \tilde{u}^{2^\#} dx \right)^\frac{2}{2^\#} \leq K \int_{\mathbb{R}^n} (\Delta_\xi^\frac{m}{2} \tilde{u})^2 dx \]
and so we obtain (after integrating by parts)
\begin{equation}\label{uequalsone} \left( \int_{\mathbb{R}^n} \tilde{u}^{2^\#} dx \right)^\frac{2}{2^\#} \leq \int_{\mathbb{R}^n} \tilde{u}^{2^\#} dx .\end{equation}
By a change of variable, for all \( R > 0 \) and sufficiently large \( \alpha \) we have
\[ \int_{B_0(R)} \tilde{u}_\alpha^{2^\#} dv_{g_\alpha} = \int_{B_{x_\alpha}(\mu_\alpha R)} u_\alpha^{2^\#} dv_g \leq 1 \]
and taking the limit as \( \alpha \to \infty \) and then as \( R \to \infty \) implies \[\int_{B_0(\mathbb{R}^n)} \tilde{u}^{2^\#} \leq 1.\] Hence by \( (\ref{uequalsone}) \) we have \[ \int_{B_0(\mathbb{R}^n)} \tilde{u}^{2^\#} dx =1 \]

Fixing \( R > 0 \) large and \( \delta > 0 \) small, we have
\[ 1 \geq \limsup_{\alpha \to \infty} \int_{B_{x_0}(\delta)} u_\alpha^{2^\#} dv_g \geq \lim_{\alpha \to \infty} \int_{B_{x_\alpha}(\mu_\alpha R)} u_\alpha^{2^\#} dv_g = \int_{B_0(R)} \tilde{u}^{2^\#} dx = 1-\epsilon_R \]
where \( \epsilon_R \to 0 \) as \( R \to \infty \).
We thus obtain \[\lim_{\alpha \to \infty} \int_{B_{x_0}(\delta)} u_\alpha^{2^\#} dv_g = 1. \] Since \( \delta \) is arbitrary, if \( y_0 \neq x_0 \) for any \( \delta < d(x_0, y_0) \) we must have
\[ \lim_{\alpha \to \infty} \int_{B_{y_0}(\delta)} u_\alpha^{2^\#} dv_g = 0 \] otherwise we would contradict \( \int_M u_\alpha^{2^\#} dv_g = 1 \). This completes the proof there is only one unique concentration point \( x_0 \) along our subsequence.

\textbf{Step 2:} For all \( x \in M \) we have the inequality \begin{equation}\label{step2} d_g(x_\alpha, x)^{\frac{n-2m}{2}} u_\alpha(x) \leq C. \end{equation} 

Suppose there is some sequence \( y_\alpha \) such that \begin{equation*} \sup_{x \in M} d_g(x_\alpha, x)^{\frac{n-2m}{2}} u_\alpha(x) =  d_g(x_\alpha, y_\alpha)^{\frac{n-2m}{2}} u_\alpha(y_\alpha) \to \infty. \end{equation*}

Similarly to what is done in Step 1, we define \( \hat{\mu}_\alpha = u_\alpha(y_\alpha)^{-\frac{2}{n-2m}}\) and define the rescalings \( \hat{u}_\alpha \) of \( u_\alpha \) on \( \mathbb{R}^n \) with metric \( \hat{g}_\alpha \) as follows.

 \[ \hat{u}_\alpha = \hat{\mu}_\alpha^{\frac{n-2m}{2}} u_\alpha( \text{exp}_{y_\alpha}(\hat{\mu}_\alpha x) ), \, \, \, \hat{g}_\alpha = (\text{exp}_{y_\alpha}^*g)(\hat{\mu}_\alpha x ). \]
 Then \( \hat{u} \) is a solution to the PDE 
 \[  (\Delta_{\hat{g}_\alpha} + \hat{\mu}_\alpha^2\alpha)^m \hat{u}_\alpha = \lambda_\alpha \hat{u}_\alpha^{2^\#-1}. \]
By our definition of \( y_\alpha \) we additionally have
 \begin{equation}\label{uhatbound} \hat{u}_\alpha(x) \leq \left(\frac{d_g( x_\alpha,y_\alpha)}{d_g(x_\alpha, exp_{y_\alpha} (\hat{\mu}x)}\right)^\frac{n-2m}{2} . \end{equation} 
 Fix \( R > 0 \). If \( |x| \leq R \) the triangle inequality implies \[ d_g(x_\alpha, exp_{y_\alpha} (\hat{\mu}_\alpha x)) \geq d_g(x_\alpha, y_\alpha) - R \hat{\mu}_\alpha.  \]
 Because \( d_g(x_\alpha, y_\alpha)^{\frac{n-2m}{2}} u_\alpha(y_\alpha) \to \infty \), we additionally have \[ \frac{\hat{\mu}_\alpha}{d_g(x_\alpha, y_\alpha)} \to 0. \] Therefore by \( (\ref{uhatbound}) \), \[ \hat{u}_\alpha(x) \leq \left(1 - \frac{R \hat{\mu}_\alpha}{d_g(x_\alpha, y_\alpha)} \right)^{-\frac{n-2m}{2}}  \]
which implies \( \hat{u}_\alpha \) are \( C^0 \) bounded on compact sets.

 Now that we have local \( C^0 \) bounds, following the De Georgi-Nash-Moser iteration scheme argument seen in Step \( 1 \) shows for all \( 1 \leq k \leq m \), the functions \( \hat{v}^{(k)} = (\Delta_{\hat{g}_\alpha} + \hat{\mu}_\alpha^2 \alpha)^k (\hat{u}_\alpha) \) are bounded on compact sets. 

Let us first consider the case (after possibly taking a subsequence) \( y_\alpha \to y_0 \neq x_0 \). Then for all \( \epsilon > 0 \), because \( \Delta_{\hat{g}_\alpha} \hat{u}_\alpha \leq \hat{v}^{(1)}_\alpha \leq C  \),
\[  \Delta_{\hat{g}_\alpha} (\hat{u}_\alpha)^{1+\epsilon} = (1+\epsilon)\hat{u}^\epsilon_\alpha \Delta_{\hat{g}_\alpha} \hat{u}_\alpha - \epsilon(1+\epsilon)\hat{u}_\alpha^{\epsilon - 1}|\nabla_{\hat{g}_\alpha} \hat{u}_\alpha|^2 \leq C \hat{u}^\epsilon_\alpha \]

Now fixing \( \epsilon \) small such that \( \frac{2}{\epsilon} > \frac{n}{2} \) and applying the De Georgi-Nash-Moser iteration scheme gives for all \( p > 0 \) there exists \( C_p \) such that
\[ \sup_{B_0(R-1)} (\hat{u}_\alpha)^{1+\epsilon} \leq C_p\left(  \lVert ( \hat{u}_\alpha)^{1+\epsilon} \rVert_{L^p(B_0(R)} + \lVert  \hat{u}^\epsilon_\alpha \rVert_{L^{\frac{2}{\epsilon}}(B_0(R)} \right). \]
Setting \( p = \frac{2}{1+\epsilon} \) gives us
\[ \sup_{B_0(R-1)} (\hat{u}_\alpha)^{1+\epsilon} \leq C\lVert \hat{u}_\alpha \rVert_{L^2(B_0(R)}^\epsilon(1 + \lVert \hat{u}_\alpha \rVert_{L^2(B_0(R)}). \]
Because \( \hat{u}_\alpha(0) = 1 \), this implies \( \liminf_{\alpha \to \infty} \lVert \hat{u}_\alpha \rVert_{L^2(B_0(R))} > 0 \). However, because \( x_0 \) is the only concentration point we additionally have
\[ \lVert \hat{u}_\alpha \rVert_{L^2(B_0(R))} \leq C \left( \int_{B_0(R)} \hat{u}^{2^\#}_\alpha \right)^\frac{1}{2^\#} = C\left( \int_{B_{y_\alpha}(R\hat{\mu}_\alpha)} u_\alpha^{2^\#} \right)^\frac{1}{2^\#} \to 0.  \] and we have a contradiction.

Now let us consider the case (after potentially taking a subsequence) \( y_\alpha \to x_0 \). In order to apply the same logic as Step 1 in showing \( \hat{u}_\alpha \) converges to a limit function in \( C^{2m}(\mathbb{R}^n) \), we need to show \( (\Delta_{\hat{g}_\alpha} \hat{u}_\alpha)(0) \) is bounded from below. This corresponds to showing \( \hat{\mu}_\alpha^2 (\Delta_g u) (y_\alpha) \) is bounded from below. We let \( r_\alpha \) correspond to the function \( d(x_\alpha, \cdot) \). Because \( y_\alpha \to x_0 \), \( y_\alpha \) will be contained in an exponential chart around \( x_\alpha \) for \( \alpha \) sufficiently large. In particular, \( d_g(x_\alpha, x)^{\frac{n-2m}{2}} u_\alpha(x) \) is twice differentiable at \( y_\alpha \) for \( \alpha \) sufficiently large, so we have \( \Delta_g (r_\alpha^\frac{n-2m}{2} u_\alpha)(y_\alpha) \geq 0 \) and \( \nabla_g (r_\alpha^\frac{n-2m}{2} u_\alpha)(y_\alpha)=0 \). The latter equation implies \[ r_\alpha^\frac{n-2m}{2} \nabla_g u_\alpha(y_\alpha) = (- \nabla_g (r_\alpha^\frac{n-2m}{2}) u_\alpha)(y_\alpha). \] Then, using the well known formulas in an exponential chart \( |\nabla_g f(r)| = |f'(r)| \) and \( \Delta_g f(r) = \Delta_\xi f(r) - f'(r) \partial_r (\ln{\sqrt{g}}) \) for a radial function \( f \), we compute, with all expressions are evaluated at \( y_\alpha \),

\begin{align*} 0 \leq \Delta_g (r_\alpha^\frac{n-2m}{2} u_\alpha) =& \,  r_\alpha^\frac{n-2m}{2} \Delta_g u_\alpha + 2 \left(\frac{n-2m}{2}\right)^2 r_\alpha^\frac{n-2m-4}{2} u_\alpha\\
&- \left(\frac{n-2m}{2}\right)\left(\frac{3n-2m-4}{2}\right) r_\alpha^\frac{n-2m-4}{2}u_\alpha - \frac{n-2m}{2}r_\alpha^\frac{n-2m-2}{2}\partial_r (\ln{\sqrt{g}})u_\alpha \end{align*}
which implies
\[ \frac{n-2m}{2}r_\alpha^{-1}\partial_r (\ln{\sqrt{g}})u_\alpha + \left(\frac{n-2m}{2}\right)\left(\frac{n+2m-4}{2}\right) r_\alpha^{-2}u_\alpha \leq \Delta_g u_\alpha.  \]

The Cartan expansion of the metric shows \( \partial_r (\ln{\sqrt{g}}) \geq -Cr \) where \( C \) is based only on the curvature of the metric, therefore we have

\[ \frac{n-2m}{2}\left[ -C + \frac{n+2m-4}{2} r_\alpha^{-2} \right]u_\alpha \leq \Delta_g u_\alpha \]

The left hand side is bounded from below (positive for sufficiently large \( \alpha \) because \( y_\alpha \to x_0 \)), so our claim holds and we let \( \hat{u} : \mathbb{R}^n \to \mathbb{R} \) be the limit function of \( \hat{u}_\alpha \).

Let \( \delta > 0 \) be small and \( R \) be fixed. We recall notions from Step 1, using the same \( \tilde{u} \) and \( \tilde{u}_\alpha \),
\[ \int_{B_0(R)} \tilde{u}_\alpha^{2^\#} dv_{\tilde{g}_\alpha} = \int_{B_{x_\alpha}(R \mu_\alpha)}u^{2^\#}_\alpha dv_g \]
and
\[ \int_{\mathbb{R}^n} \tilde{u}^{2^\#} dx = 1. \]
We therefore have \[ \lim_{\alpha \to \infty} \int_{B_{x_\alpha}(R \mu_\alpha)}u^{2^\#}_\alpha dv_g = \int_{B_0(R)} \tilde{u}^{2^\#} dx =   1 - \epsilon_R \] where \( \epsilon_R \to 0 \) as \( R \to \infty \).

Combining this with the fact that \[ \lim_{\alpha \to \infty} \int_{B_{x_0}(\delta)} u_\alpha^{2^\#} dv_g = 1 \] immediately implies \[ \int_{B_{x_0}(\delta) \setminus B_{x_\alpha}(R \mu_\alpha)} u_\alpha^{2^\#} \leq \epsilon_R + o(1). \]
Because \( \hat{\mu}_\alpha \to 0 \) and \( y_\alpha \to x_0 \), for sufficiently large \( \alpha \) we obtain
\[ \int_{B_{y_\alpha} (\hat{\mu}_\alpha)  } u_\alpha^{2^\#} \leq \int_{B_{y_\alpha} (\hat{\mu}_\alpha) \cap B_{x_\alpha}(R \mu_\alpha)  } u_\alpha^{2^\#} + \epsilon_R + o(1). \]
As stated before \( \hat{\mu} = o(d_g(x_\alpha, y_\alpha)) \). Combining this fact with \( \mu_\alpha \leq \hat{\mu}_\alpha \) implies \( B_{y_\alpha} (\hat{\mu}_\alpha) \cap B_{x_\alpha}(R \mu_\alpha) = \emptyset \) for sufficiently large \( \alpha \). 
Therefore
\[ \int_{B_{y_\alpha} (\hat{\mu}_\alpha)  } u_\alpha^{2^\#} \leq \epsilon_R + o(1). \]
Taking the limit as \( \alpha \to \infty \) shows \[ \int_{B_0(1)} \hat{u}^{2^\#} dx \leq \epsilon_R. \]
However, the left hand side is a positive constant independent of \( R \) or \( \alpha \) and the right hand side converges to \( 0 \) as we send \( R \to \infty \) and we have a contradiction, therefore \( (\ref{step2}) \) holds.

As a consequence of Step 2, we prove for all \( 0 \leq k \leq m-1 \), \( v^{(k)}_\alpha \to 0 \) in \( C^0_{loc}(M \setminus{\{x_0\}}) \). By Step 2, \( u_\alpha \) is bounded in \( C^0_{loc}(M \setminus{\{x_0\}}) \) and \( v^{(m-1)}_\alpha \) satisfies \[ \Delta v^{(m-1)}_\alpha \leq \frac{1}{K} u^{2^\#-1}. \]
Integrating the PDE \( (\ref{originalpde}) \) implies \( \int \alpha^m u_\alpha \leq C \) by the divergence theorem, so for \( 0 \leq k \leq m-1 \), \[ \int_M v^{(k)}_\alpha = \int_M \alpha^{k} u_\alpha \to 0 \]
as \( \alpha \to \infty \).
Additonally, because \( \int_M u_\alpha \to 0 \) and \( u_\alpha \) is bounded in \( C^0_{loc}(M \setminus{\{x_0\}}) \), for any \( p>0 \) and for any compact subset \( E \) of \( M\setminus{\{x_0\}} \)
\[ \int_E u_\alpha^p \to 0 \]
as \( \alpha \to \infty \).
Therefore by the De Georgi-Nash-Moser iteration scheme  \( v^{(m-1)}_\alpha \to 0 \) in \( C^0_{loc}(M \setminus{\{x_0\}}) \). Now, assuming for some \( 1\leq k \leq m-1 \) \( v^{(k)}_\alpha \to 0 \) in \( C^0_{loc}(M \setminus{\{x_0\}}) \), then we consider
\[ \Delta_g v^{(k-1)}_\alpha \leq v^{(k)}_\alpha \]
and applying the De Georgi-Nash-Moser iteration scheme again shows \( v^{(k-1)}_\alpha \to 0 \) in \( C^0_{loc}(M \setminus{\{x_0\}}) \) and the claim follows by induction.

\textbf{Step 3: } \( L^2 \) concentration holds, i.e. for each \( \delta > 0 \) and for all \( 0 \leq i \leq m-1 \)

 \[\lim_{\alpha \to \infty} \frac{ \int_{M \setminus B_{x_0}(\delta)} |\Delta^\frac{i}{2}u_\alpha|^2 }{\int_M u_\alpha^2 } = 0\]

 Our first claim is that for all \( 0 \leq k \leq m-1 \), 
\begin{equation}\label{step3claim} \int_{M \setminus B_{x_0}( \delta)} (\Delta^\frac{k}{2} u_\alpha)^2 \leq C \alpha^k \int_{M \setminus B_{x_0}( 2^{-(2k+1)}\delta)} u^2 \end{equation}
Let \( \delta > 0 \) be arbitrary. Let us as before write \( v^{(m-1)}_\alpha = (\Delta + \alpha)^{m-1}u_\alpha \). By the last PDE in the system \( (\ref{systemwithouttilde}) \) we have \[ \Delta v_\alpha^{(m-1)} + \alpha v_\alpha^{(m-1)} - \frac{1}{\lambda_\alpha} u_\alpha^{2^{\#}-1} = 0.  \]
We write this as
\[ \sum_{i=1}^{m} c_{i,m} \alpha^{m-i} \Delta^i u_\alpha + \alpha^m u_\alpha  - \frac{1}{\lambda_\alpha} u_\alpha^{2^{\#}-1} = 0.\]
Because \( u_\alpha \to 0 \) in \( C^0_{loc}(M\setminus\{x_0\}) \), on any compact \( E \subset M\setminus\{x_0\} \) we have \( \alpha^m u_\alpha  - \frac{1}{\lambda_\alpha} u_\alpha^{2^{\#}-1} \geq 0 \) for suffiently large \( \alpha \). Additionally, by the simple identity \( (a+b)^k -b^k = a \sum_{i=0}^{k-1} b^i(a+b)^{k-1-i} \) we have
\[ \sum_{i=1}^{m} c_{i,m} \alpha^{m-i} \Delta^i u_\alpha  = \Delta\left(\sum_{i=0}^{m-1} \alpha^i v_\alpha^{(m-1-i)}\right). \]
Therefore, applying the  De Georgi-Nash-Moser scheme we obtain

\[ \sup_{M\setminus B_{x_0}( \delta)} \sum_{i=0}^{m-1} \alpha^i v_\alpha^{(m-1-i)} \leq C \int_{M \setminus B_{x_0}( \delta/2)} \sum_{i=0}^{m-1} \alpha^i v_\alpha^{(m-1-i)}\]

Let \( \eta \) be a smooth nonnegative function equal to \( 1 \) on \( M \setminus B_{\frac{\delta}{2}}(x_0)\) and equal to \( 0 \) on \( B_{\frac{\delta}{4}}(x_0)\). Then we continue the calculation, integrating by parts the expression \( \int_M \eta \sum_{i=0}^{m-1} \alpha^i v_\alpha^{(m-1-i)} \) to obtain

\begin{equation}\label{supbound}  \int_{M \setminus B_{x_0}( \delta/2)} \sum_{i=0}^{m-1} \alpha^i v_\alpha^{(m-1-i)} \leq \int_{M} \eta \sum_{i=0}^{m-1} \alpha^i v_\alpha^{(m-1-i)} \leq C\alpha^{m-1} \int_{M \setminus B_{x_0}( \delta/4)} u_\alpha.   \end{equation}

Let \( \eta \) now be a smooth nonnegative function equal to \( 1 \) on \( M \setminus B_{2\delta}(x_0)\) and equal to \( 0 \) on \( B_{\delta}(x_0)\). Now we prove \( (\ref{step3claim}) \) by induction. The base case \( k=0 \) holds by immediately. Now suppose for \( 1 \leq k \leq m-1 \) \( (\ref{step3claim}) \) is true for \( i < k \). Considering the \( m-1-k \)th term of \( \ref{supbound} \) individually and applying Cauchy-Schwarz gives us

\[ \alpha^{m-1-k} \int_M \eta v^{(k)}_\alpha u_\alpha \leq C \alpha^{m-1} \left(\int_{M \setminus B_{x_0}( \delta/4)} u_\alpha\right)^2 \leq C \alpha^{m-1} 
 \int_{M \setminus B_{x_0}( \delta/4)} u_\alpha^2   \]
 which we can write as
 \[  \int_M \eta u_\alpha  \sum_{l=0}^k \alpha^{k-l} c_{l,k} \Delta^l u_\alpha \leq C\alpha^{k} \int_{M \setminus B_{x_0}(\frac{\delta}{4})} u^2_\alpha .\]
 Applying Lemma A.6 to each term with \( r = \frac{\delta}{2} \) we obtain
 \begin{equation}\label{step3bound1} \int_M \eta \sum_{l=0}^k c_{l,k} \alpha^{k-l} (\Delta^\frac{l}{2} u_\alpha)^2 \leq C\alpha^{k} \int_{M \setminus B_{x_0}(\frac{\delta}{4})} u^2_\alpha + C\int_{M \setminus B_{x_0} (\delta/2)} \sum_{l=0}^k \alpha^{k-l} \sum_{p=0}^{l-1} (\Delta^\frac{p}{2} u_\alpha)^2. \end{equation}
 Given a fixed \( l \leq k \) for all \( 0 \leq p \leq l-1 \) we have by the induction hypothesis
 \begin{equation}\label{step3bound2} \int_{M \setminus B_{x_0} (\delta/2)} \alpha^{k-l} (\Delta^\frac{p}{2} u_\alpha)^2 \leq C\int_{M \setminus B_{x_0} (2^{-(2k)}\delta)} \alpha^{k-l} \alpha^{p} u_\alpha^2 \leq C \alpha^{k} \int_{M \setminus B_{x_0} (2^{-(2k)}\delta)} u_\alpha^2. \end{equation}
 We therefore obtain by \( (\ref{step3bound1}) \) and \( \ref{step3bound2}) \)
 \[ \int_{M \setminus B_{x_0}( 2\delta)} (\Delta^\frac{k}{2} u_\alpha)^2 \leq C \alpha^k \int_{M \setminus B_{x_0}( 2^{-(2k)}\delta)} u^2\]
 which is equivalent to \( (\ref{step3claim}) \). For all \( 1 \leq k \leq m-1 \) we obtain the bound
 \begin{equation}\label{step3boundreal} \int_{M \setminus B_{x_0}( \delta)} (\Delta^\frac{k}{2} u_\alpha)^2 \leq C \alpha^k \int_{M \setminus B_{x_0}( 2^{-(2m+1)}\delta)} u^2\end{equation}

Now multiplying the PDE \( (\ref{originalpde}) \) by \( \eta \) and using the fact that \( u_\alpha \to 0 \) in \( C^0_{\text{loc}}(M \setminus \{x_0\})\) we obtain

\[ \sum_{i=0}^m \int_M \eta c_{i, m}\alpha^{m-i} u_\alpha \Delta^i u_\alpha  = \int_M \eta u_\alpha^{2^{\#}} \leq \int_{M \setminus B_{x_0}( 2^{-(2m+1)}\delta)} u_\alpha^{2^\#} \leq  C\int_{M \setminus B_{x_0}( 2^{-(2m+1)}\delta)} u_\alpha^2 \]
We apply Lemma A.6 with \( r = \frac{\delta}{2} \) to each term and substitute \( i' = i-1 \) to obtain

\begin{align*} 
\sum_{i=0}^m \int_M \eta c_{i, m}\alpha^{m-i}  (\Delta^\frac{i}{2} u_\alpha)^2 & \leq  C \alpha^{m-1}\int_{M \setminus B_{x_0}(2^{-(2m+2)}\delta)} u^2_\alpha + C\sum_{i=1}^{m} \alpha^{m-i}\sum_{j=0}^{i-1} \int_{M\setminus B_{x_0(\frac{\delta}{2})}} (\Delta^j u_\alpha)^2 \\
&= C \alpha^{m-1}\int_{M \setminus B_{x_0}(2^{-(2m+2)}\delta)} u^2_\alpha +  C\sum_{i'=0}^{m-1} \alpha^{m-1-i'}\sum_{j=0}^{i'} \int_{M\setminus B_{x_0(\frac{\delta}{2})}} (\Delta^j u_\alpha)^2 \\
\end{align*}
Given some \( 0 \leq i' \leq m-1 \) and \( 0 \leq j \leq i' \) we apply \( (\ref{step3claim}) \) to obtain
\[ \alpha^{m-1-i'} \int_{M\setminus B_{x_0(\frac{\delta}{2})}} (\Delta^j u_\alpha)^2 \leq C\alpha^{m-1-i'+j}\int_{M \setminus B_{x_0}(2^{-(2m+2)}\delta)} u^2_\alpha \leq C \alpha^{m-1}\int_{M \setminus B_{x_0}(2^{-(2m+2)}\delta)} u^2_\alpha. \]
Hence we have
\begin{equation}\label{step3final} \sum_{i=0}^m \int_M \eta c_{i, m}\alpha^{m-i}  (\Delta^\frac{i}{2} u_\alpha)^2 \leq C \alpha^{m-1}\int_{M \setminus B_{x_0}(2^{-(2m+2)}\delta)} u^2_\alpha. \end{equation}
After dividing both sides by \( \alpha^{m-1} \) we have \begin{equation}\label{step3alpha} \alpha \int_{M \setminus B_{x_0}( 2\delta)} u^2_\alpha \leq C\int_{M \setminus B_{x_0}(2^{-(2m+2)}\delta)} u^2_\alpha \end{equation} and we have

\[  0 \leq \lim_{\alpha \to \infty} \frac{ \int_{M \setminus B_{x_0}(\delta)} u_\alpha^2 }{\int_M u_\alpha^2 } \leq \lim_{\alpha \to \infty} \frac{ \int_{M \setminus B_{x_0}(\delta)} u_\alpha^2 }{\int_{M \setminus B_{x_0}(2^{-(2m+3)}\delta)} u_\alpha^2}  \leq \lim_{\alpha \to \infty} \frac{C}{\alpha} = 0  \]
proving Step 3 for \( i=0 \). For \( 1 \leq i \leq m-1 \) we divide both sides of \((\ref{step3final})\) by \( \alpha^{m-i} \) and apply \( (\ref{step3alpha}) \) \( i-1 \) times to obtain
\[ \int_{M \setminus B_{x_0}(\delta)} (\Delta^\frac{i}{2}u_\alpha)^2 \leq C \alpha^{m-1-i}\int_{M \setminus B_{x_0}(2^{-(2m+3)}\delta)} u^2_\alpha \leq C \int_{M \setminus B_{x_0}(2^{-(i-1)(2m+3)}\delta)} u^2_\alpha  \] 
and we have
\[  0 \leq \lim_{\alpha \to \infty} \frac{ \int_{M \setminus B_{x_0}(\delta)} (\Delta^\frac{i}{2}u_\alpha)^2 }{\int_M u_\alpha^2 } \leq C\lim_{\alpha \to \infty} \frac{ \int_{M \setminus B_{x_0}(2^{-(i-1)(2m+3)}\delta)} u_\alpha^2 }{\int_M u_\alpha^2 }  =0 \]
and the proof of Step 3 is complete.

\subsection{Main Argument}

We first note an immediate consequence of Step 3 is for all \( 0 \leq k \leq m-1 \)
\begin{equation}\label{step3c} \lVert u_\alpha \rVert_{H_{k}^2(M)} \leq C \sum_{i=0}^{k} \int_{B_{x_0}(\delta)} (\Delta^{\frac{i}{2}} u_\alpha)^2 .\end{equation}

Now we let \( \frac{i_g}{4} > \delta > 0 \) and let \( \eta \) be a nonnegative smooth function defined on \( \mathbb{R}^n \) such that \( \eta = 1\) on \( B_0(2\delta) \) and equal to \( 0 \) on \( \mathbb{R}^n \setminus B_0(3\delta) \). We define \( \eta_\alpha \) on \( M \) by \( \eta_\alpha(x) = \eta(\text{exp}_{x_\alpha}^{-1}(x)) \). We then define \( U_\alpha \) by \( U_\alpha = \eta_\alpha u_\alpha \). We have uniform bounds for all derivatives of \( \eta_\alpha \).

Because \( U_\alpha \) is only nonzero inside a geodesic chart, we consider the Euclidean metric \( \xi \) on \( B_{x_\alpha}(3\delta) \) defined through the pullback of \( exp^{-1}_{x_\alpha} \). We write \( dx = dv_\xi \) and the Euclidean Sobolev inequality gives

\begin{equation}\label{euclid} \left(\int_M U_\alpha^{2^\#} dx\right)^\frac{2}{2^\#} \leq K \int_M(\Delta_\xi^\frac{m}{2} U_\alpha)^2 dx \end{equation}

Euclidean integration by parts shows
\[ \int_M (\Delta_\xi^\frac{m}{2} U_\alpha)^2 dx = \int_M |\nabla^m_\xi U_\alpha|^2 dx .\] 
Then Lemma A.9 gives
\[  \int_M |\nabla^m_\xi U_\alpha|^2 dx \leq \int_M (1 + C r_\alpha^2)\left(|\nabla^m_g U_\alpha|^2 + C \sum_{i=0}^{m-1}|\nabla^i_g U_\alpha|^2 \right) dv_g \]
where \( r_\alpha = d_g(x_\alpha, \cdot) \).
Then by Lemma A.3 \begin{equation}\label{eucaboveterms} \int_M (\Delta_\xi^\frac{m}{2} U_\alpha)^2 dx \leq \int_M (\Delta_g^\frac{m}{2} U_\alpha)^2 dv_g + C \int_M r_\alpha^2 |\nabla_g^m U_\alpha|^2 dv_g + C \sum_{i=0}^{m-1}  \int_M |\nabla_g^i U_\alpha|^2 dv_g \end{equation}
We consider the third term in \( (\ref{eucaboveterms}) \). Applying norm equivalence for \( H_m^2(M) \) (see Robert\cite{robert}), Lemma A.5 and \( (\ref{step3c}) \) results in
\begin{equation}\label{remainderU} \sum_{i=0}^{m-1}  \int_M  |\nabla_g^i U_\alpha|^2 dv \leq C\sum_{i=0}^{m-1}\int_M  |\Delta_g^\frac{i}{2} U_\alpha|^2 dv  \leq C\sum_{i=0}^{m-1}\int_M  |\Delta_g^\frac{i}{2} u_\alpha|^2 dv \leq  C \sum_{i=0}^{m-1} \int_{B_{x_0}(\delta)} (\Delta^{\frac{i}{2}} u_\alpha)^2  \end{equation}
where Lemma A.5 is applied to each individual term of the second sum.
For the first term of \( (\ref{eucaboveterms}) \), we use Lemma A.5, A.6 and \( (\ref{step3c}) \) to obtain 
\begin{align*} \int_M (\Delta_g^\frac{m}{2} U_\alpha)^2 & \leq \int_M\eta_\alpha^2 (\Delta_g^\frac{m}{2} u_\alpha)^2 + C\sum_{i=0}^{m-1} \int_{B_{x_0}(\delta)} (\Delta^{\frac{i}{2}} u_\alpha)^2 \\  
& \leq \int_M\eta_\alpha^2 u_\alpha \Delta^m u_\alpha + C\sum_{i=0}^{m-1} \int_{B_{x_0}(\delta)} (\Delta^{\frac{i}{2}} u_\alpha)^2 \end{align*}
We then substitute \( \Delta^m u_\alpha = u^{2^\#-1}_\alpha - \sum_{i=0}^{m-1} c_{i,m} \alpha^{m-i} \Delta^i u_\alpha\), apply Lemma A.6 to each term, and apply \( (\ref{step3c}) \) to obtain
\begin{align} \int_M (\Delta_g^\frac{m}{2} U_\alpha)^2 & \leq \lambda_\alpha \int_M\eta_\alpha^2 u^{2^\#}_\alpha - \left( \sum_{i=0}^{m-1} c_{i,m}\alpha^{m-i}\int_M\eta_\alpha^2 u_\alpha \Delta^i u_\alpha \right) + C\sum_{i=0}^{m-1} \int_{B_{x_0}(\delta)} (\Delta^{\frac{i}{2}} u_\alpha)^2 \label{boundbegin} \\
& \leq \frac{1}{K} \int_M\eta_\alpha^2 u^{2^\#}_\alpha - \left( \sum_{i=0}^{m-1} c_{i,m}\alpha^{m-i}\int_M\eta_\alpha^2  (\Delta^\frac{i}{2} u_\alpha)^2 \right) \notag \\
& \quad\quad\quad\quad\quad\quad\quad+ C\sum_{i=1}^{m-1} \alpha^{m-i}\sum_{j=0}^{i-1} \int_M (\Delta^\frac{j}{2} u)^2 + C\sum_{i=0}^{m-1} \int_{B_{x_0}(\delta)} (\Delta^{\frac{i}{2}} u_\alpha)^2 \\
& \leq \frac{1}{K} \int_M\eta_\alpha^2 u^{2^\#}_\alpha - \left( \sum_{i=0}^{m-1} c_{i,m}\alpha^{m-i}\int_M\eta_\alpha^2  (\Delta^\frac{i}{2} u_\alpha)^2 \right) \notag \\
& \quad\quad\quad\quad\quad\quad\quad+ C\sum_{i=1}^{m-1} \alpha^{m-i}\sum_{j=0}^{i-1} \int_{B_{x_0}(\delta)} (\Delta^\frac{j}{2} u)^2 + C\sum_{i=0}^{m-1} \int_{B_{x_0}(\delta)} (\Delta^{\frac{i}{2}} u_\alpha)^2.
\end{align}

Considering the remainder terms, substituting \( i' = i-1 \) we have
\begin{align} \sum_{i=1}^{m-1} \alpha^{m-i}\sum_{j=0}^{i-1} \int_{B_{x_0}(\delta)} (\Delta^\frac{j}{2} u_\alpha)^2 + \sum_{i=0}^{m-1} \int_{B_{x_0}(\delta)} (\Delta^{\frac{i}{2}} u_\alpha)^2 &= \sum_{i'=0}^{m-2} \alpha^{m-1-i'}\sum_{j=0}^{i'} \int_{B_{x_0}(\delta)} (\Delta^\frac{j}{2} u_\alpha)^2 + \sum_{i=0}^{m-1} \int_{B_{x_0}(\delta)} (\Delta^{\frac{i}{2}} u_\alpha)^2 \\
&= \sum_{i'=0}^{m-1} \alpha^{m-1-i'}\sum_{j=0}^{i'} \int_{B_{x_0}(\delta)} (\Delta^\frac{j}{2} u_\alpha)^2 
\end{align}
Then reindexing with \( (\ref{5.r}) \) gives 
\begin{align} \sum_{i'=0}^{m-1} \alpha^{m-1-i'}\sum_{j=0}^{i'} \int_{B_{x_0}(\delta)} (\Delta^\frac{j}{2} u_\alpha)^2 &= \sum_{i=0}^{m-1} \sum_{j=0}^{m-1-i} \int_{B_{x_0}(\delta)} \alpha^{m-1-i-j} (\Delta^\frac{j}{2} u_\alpha)^2\\
&= \sum_{j=0}^{m-1} \sum_{i=0}^{m-1-j} \int_{B_{x_0}(\delta)} \alpha^{m-1-j-i} (\Delta^\frac{j}{2} u_\alpha)^2 \\
&=  \sum_{j=0}^{m-1}  \int_{B_{x_0}(\delta)} (\Delta^j u_\alpha)^2 \sum_{i=0}^{m-1-j}  \alpha^{i} \label{boundend} .
\end{align}
Therefore we obtain, using the fact that \( B_{x_\alpha}(2\delta) \supset B_{x_0}(\delta) \) for large \( \alpha \),
\begin{equation}\label{poly} \int_M (\Delta_g^\frac{m}{2} U_\alpha)^2 \leq \frac{1}{K} \int_M u^{2^\#}_\alpha -   \sum_{i=0}^{m-1} \left(c_{i,m} \alpha^{m-i} - \sum_{j=0}^{m-1-i} C\alpha^j \right)\int_{B_{x_0}(\delta)} (\Delta^{\frac{i}{2}} u)^2  .\end{equation}

We now estimate the second term of \( (\ref{eucaboveterms}) \). Applying Lemmas A.3, A.5, A.6, substituting the PDE \( (\ref{originalpde}) \), and applying \( (\ref{remainderU}) \) gives us

\begin{align} \int_M r_\alpha^2 |\nabla_g^m U_\alpha|^2 dv & \leq \int_M r_\alpha^2 (\Delta^{\frac{m}{2}}U_\alpha)^2 + C\lVert U_\alpha \rVert_{H_{m-1}^2} \notag \\
& \leq \int_M \eta_\alpha^2 r_\alpha^2 (\Delta^{\frac{m}{2}}u_\alpha)^2 + C\lVert U_\alpha \rVert_{H_{m-1}^2} \notag \\
& \leq \int_M \eta_\alpha^2 r_\alpha^2 u_\alpha \Delta^m u_\alpha + C \lVert U_\alpha \rVert_{H_{m-1}^2} \notag \\
& \leq \frac{1}{K} \int_M \eta_\alpha^2 r_\alpha^2 u^{2^{\#}}_\alpha dv_g - \left( \sum_{i=0}^{m-1} c_{i,m}\alpha^{m-i}\int_M\eta_\alpha^2 r_\alpha^2 u_\alpha \Delta^i u_\alpha \right) + C\sum_{i=0}^{m-1} \int_{B_{x_0}(\delta)} (\Delta^{\frac{i}{2}} u_\alpha)^2 \notag  \\
&\leq \frac{1}{K} \int_M \eta_\alpha^2 r_\alpha^2 u^{2^{\#}}_\alpha dv_g + C \sum_{j=0}^{m-1}  \int_{B_{x_0}(\delta)} (\Delta^j u_\alpha)^2 \sum_{i=0}^{m-1-j}  \alpha^{i} \label{secondtermbound1}
\end{align}
where to obtain \( (\ref{secondtermbound1}) \) we have applied the computations from \( (\ref{boundbegin})-(\ref{boundend}) \).

By Step 2, we have 
\[ r_\alpha^2 u^{2^{\#}}_\alpha = u_\alpha (r_\alpha u^{\frac{n}{n-2m}}) (r_\alpha u_\alpha^{\frac{2}{n-2m}}) u^{\frac{2m-2}{n-2m}} \\
 \leq C u_\alpha (r_\alpha u^{\frac{n}{n-2m}}_\alpha) u^{\frac{2m-2}{n-2m}}_\alpha \]
 and therefore \[ \eta_\alpha^2 r_\alpha^2 u^{2^{\#}}_\alpha \leq C \eta_\alpha u_\alpha (r_\alpha \eta_\alpha u^{\frac{n}{n-2m}}_\alpha) u^{\frac{2m-2}{n-2m}}_\alpha. \]
 Because \( \frac{n - 2(m-1)}{2n} + \frac{1}{2} + \frac{2m-2}{2n} = 1 \), we apply Hölder's inequality to the right hand side to obtain
\[ \int_M \eta_\alpha^2 r_\alpha^2 u^{2^{\#}}_\alpha \leq C \left(\int_M (\eta_\alpha u_\alpha)^{\frac{2n}{n - 2(m-1)}}dv_g\right)^{\frac{n-2(m-1)}{2n}}\left( \int_M \eta_\alpha^2 r_\alpha^2 u^{2^{\#}}_\alpha \right)^\frac{1}{2}\left(\int_M u_\alpha^{2^{\#}}\right)^{\frac{2m-2}{2n}}. \]
Because \( \int_M u_\alpha^{2^{\#}} = 1 \), this gives us
\begin{equation}\label{boundingetar} \int_M \eta_\alpha^2 r_\alpha^2 u^{2^{\#}}_\alpha \leq C \left(\int_M (\eta_\alpha u_\alpha)^{\frac{2n}{n - 2(m-1)}}\right)^{\frac{n-2(m-1)}{n}}.  \end{equation}
We then apply the Sobolev embedding theorem and \( (\ref{remainderU}) \) to achieve
\begin{equation}\label{boundingetar2} \left(\int_M (\eta_\alpha u_\alpha)^{\frac{2n}{n - 2(m-1)}}\right)^{\frac{n-2(m-1)}{n}} \leq C \lVert u_\alpha \rVert_{H_{m-1}^2} \leq C \sum_{i=0}^{m-1} \int_{B_{x_0}(\delta)} (\Delta^{\frac{i}{2}} u_\alpha)^2 \end{equation}
Therefore, by putting together \( (\ref{secondtermbound1}),(\ref{boundingetar}), \) and \( (\ref{boundingetar2}) \) we obtain \begin{equation}\label{eucaboveradius} \int_M r_\alpha^2 |\nabla_g^m U|^2 dv \leq C \sum_{i=0}^{m-1}  \int_{B_{x_0}(\delta)} \sum_{j=0}^{m-1-i}  \alpha^{i} (\Delta^\frac{i}{2} u_\alpha)^2 \end{equation}
Hence, by combining \( (\ref{eucaboveterms}), (\ref{remainderU}), (\ref{poly}),(\ref{eucaboveradius}) \) we obtain
\begin{equation}\label{euclidabove} K \int_M (\Delta_\xi^\frac{m}{2} U_\alpha)^2 dx \leq  \int_M u^{2^\#}_\alpha -  \sum_{i=0}^{m-1} \left(K c_{i,m}\alpha^{m-i} - \sum_{j=0}^{m-i-1} C\alpha^j \right)\int_{B_{x_0}(\delta)} (\Delta^{\frac{i}{2}} u_\alpha)^2 .  \end{equation}

Now we consider the left hand side of \( (\ref{euclid}) \). By the Cartan expansion of the metric and the definition of \( U_\alpha \) we have
\[
\int_M U_\alpha^{2^\#} dx  \geq \int_{B_{x_0}(\delta)} u_\alpha^{2^\#} dv_g -C\int_M r_\alpha^2 U_\alpha^{2^\#} dv_g . \\
\]
Since \( 0 \leq \eta_\alpha \leq 1 \) we must have \( \eta^{2^\#}_\alpha \leq \eta^2_\alpha \) and so by \( (\ref{boundingetar}) \) and \( (\ref{boundingetar2}) \) we have
\[ \int_M \eta_\alpha^{2^\#} r_\alpha^2 u^{2^{\#}}_\alpha \leq \int_M \eta_\alpha^2 r_\alpha^2 u^{2^{\#}}_\alpha  
 \leq C \sum_{i=0}^{m-1} \int_{B_{x_0}(\delta)} (\Delta^{\frac{i}{2}} u_\alpha)^2. \]
 Therefore we obtain
 \begin{equation}\label{firstsecondterm} \int_M U_\alpha^{2^\#} dx  \geq \int_{B_{x_0}(\delta)} u_\alpha^{2^\#} dv_g - C  \sum_{i=0}^{m-1} \int_{B_{x_0}(\delta)} (\Delta^{\frac{i}{2}} u_\alpha)^2\end{equation}
The first term on the right side of \( (\ref{firstsecondterm}) \) to \( 1 \) from below and the second term converges to \( 0 \), therefore raising both sides to \( \frac{2}{2^\#} < 1 \) we have for sufficiently large \( \alpha \)
\begin{equation}\label{previousinequality} \left(\int_M U_\alpha^{2^\#} dx\right)^\frac{2}{2^\#} \geq \int_{B_{x_0}(\delta)} u^{2^\#}_\alpha - C  \sum_{i=0}^{m-1} \int_{B_{x_0}(\delta)} (\Delta^{\frac{i}{2}} u_\alpha)^2  \end{equation}

Independently we have by Step 3 \[ \int_{M \setminus B_{x_0}(\delta)} u^{2^\#}_\alpha \leq \left(\sup_{M \setminus B_{x_0}(\delta)} u_\alpha\right)^{2^\#-2} \int_{M \setminus B_{x_0}(\delta)} u^2_\alpha \leq C \int_{B_{x_0}(\delta)} u_\alpha^2  \]
allowing us to strengthen \( (\ref{previousinequality}) \) to 
\begin{equation}\label{euclidbelow} \left(\int_M U_\alpha^{2^\#} dx\right)^\frac{2}{2^\#} \geq \int_M u^{2^\#}_\alpha - C  \sum_{i=0}^{m-1} \int_{B_{x_0}(\delta)} (\Delta^{\frac{i}{2}} u_\alpha)^2 \end{equation}

Therefore combining \( (\ref{euclid})\), \( (\ref{euclidabove})\), and \( (\ref{euclidbelow})\) we obtain
\[  \sum_{i=0}^{m-1} \left(K c_{i,m}\alpha^{m-i} - \sum_{j=0}^{m-i-1} C\alpha^j \right)\int_{B_{x_0}(\delta)} (\Delta^{\frac{i}{2}} u_\alpha)^2  \leq 0 \]
and we obtain a contradiction when \( \alpha \) is sufficiently large. \qed

\appendix
\section{Technical Lemmas}

In this section \( (M, g) \) will represent some smooth complete \( n \)-dimensional Riemannian manifold without boundary with bounded curvature.

In the following, given two tensors \( A, B \) we adopt the notation \( A \star B \) to denote a linear combination of contractions of \( A \otimes B \), possibly after raising and lowering indices using the metric and including the trivial linear combination \( 0 \cdot A \otimes B \). We write \( A \star_k B \) in the case each element of the linear combination is covariant of degree \( k \). 
Given a permutation \( \sigma \) of \( \{1, \dots, k \} \) and a covariant k-tensor \( A \) we define \( \sigma \cdot A \) by \( (\sigma \cdot A)_{i_1 \dots i_k} = A_{i_{\sigma(1)} \dots i_{\sigma(k)}} \). We use the notation \( A\bigstar_k B\) to denote a sum of the form
\[ \sum_{i=0}^m \sigma_i \cdot (A \star_k B) \]
for some \( \{ \sigma_i: 1 \leq i \leq m\} \) permutations on \( \{1, \dots, k \} \). We have the relation \( \nabla(A \bigstar_k B) = \nabla A \bigstar_{k+1} B + A \bigstar_{k+1} \nabla B \).

All integrals will be assumed to be with respect to the Riemannian volume element \( dv_g \) unless explicitly stated otherwise. All Laplacians will be assumed to be with respect to the metric unless explicitly stated otherwise. We let \( R \) represent the Riemann curvature tensor.

\begin{lem}
    Let \( k \geq 0 \) be an integer and \( u \in H_k^2(M) \). Let \( \sigma \) be a permutation of \( \{1, \dots, k\} \).  
    Then 
    \[ \nabla^k u - \sigma \cdot \nabla^k u = \sum_{0 \leq l \leq k-3}  \nabla^l R  \bigstar_k   \nabla^{k-2-l} u . \]
    
\end{lem}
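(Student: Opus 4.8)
The plan is to prove the identity by induction on $k$, tracking how a transposition of adjacent indices interacts with the covariant derivative. Since every permutation $\sigma$ of $\{1,\dots,k\}$ is a product of adjacent transpositions $\tau_j = (j,\ j{+}1)$, and since the class of expressions of the form $\sum_{0\le l\le k-3}\nabla^l R \bigstar_k \nabla^{k-2-l}u$ is closed under the operations that arise (adding, permuting indices, and — crucially — composing the commutator corrections), it suffices to establish the identity when $\sigma = \tau_j$ is a single adjacent transposition, and then to argue that composing two such corrections stays in the claimed form. So the first step is to reduce to $\sigma = \tau_j$.

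Next I would handle the single adjacent transposition. The point is that $\nabla^k u = \nabla^{k-j-1}\big(\nabla_{a}\nabla_{b}\,\nabla^{j-1}u\big)$ where $a,b$ are the $j$-th and $(j{+}1)$-th slots, so swapping those two slots amounts to commuting two covariant derivatives applied to the tensor $T := \nabla^{j-1}u$ (which has $j-1$ lower indices). The Ricci identity gives $\nabla_a\nabla_b T - \nabla_b\nabla_a T = R \star T$, i.e. a contraction of the Riemann tensor against $T = \nabla^{j-1}u$, with the free indices arranged appropriately; schematically this contributes a term $R \bigstar_{j+1} \nabla^{j-1}u$. Then applying the remaining $k-j-1$ outer covariant derivatives $\nabla^{k-j-1}$ and using the Leibniz-type rule $\nabla(A\bigstar_k B) = \nabla A \bigstar_{k+1}B + A\bigstar_{k+1}\nabla B$ repeatedly distributes these derivatives between the $R$ factor and the $\nabla^{j-1}u$ factor, producing exactly a sum $\sum \nabla^l R \bigstar_k \nabla^{k-2-l}u$ with $0 \le l \le k-j-1 \le k-3$ (the bound $j\ge 1$ forces $l \le k-2$, and one checks the total number of derivatives on $R$ and on $u$ adds to $k-2$, while $l=k-2$ would need $j=1$ with all outer derivatives on $R$, still within the stated range once one notes the index count — I would double check the edge bound here). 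This step uses $k \ge 3$ for the sum to be nonempty; for $k \le 2$ there are no nontrivial adjacent transpositions acting on the relevant slots, or the statement is vacuous, so the base cases are immediate.

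Finally I would check closure under composition: if $\sigma = \tau\,\sigma'$, then
\[
\nabla^k u - \sigma\cdot\nabla^k u = \big(\nabla^k u - \sigma'\cdot\nabla^k u\big) + \sigma'\cdot\big(\nabla^k u - \tau\cdot\nabla^k u\big),
\]
wait — more carefully, $\nabla^k u - \tau\sigma'\cdot\nabla^k u = (\nabla^k u - \sigma'\cdot\nabla^k u) + \sigma'\cdot(\nabla^k u - \tau\cdot\nabla^k u)$ only after relabelling, so I would instead write $\sigma'\cdot\nabla^k u - \tau\sigma'\cdot\nabla^k u = \sigma'\cdot(\nabla^k u - \tau\cdot\nabla^k u)$ is the correction applied to a permuted copy; since permuting the indices of a term $\nabla^l R \bigstar_k \nabla^{k-2-l}u$ yields another term of the same shape (this is exactly why the $\bigstar_k$ notation, which already absorbs sums over permutations, was introduced), the sum of the two corrections is again of the claimed form. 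Assembling these three steps gives the identity for arbitrary $\sigma$.

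The main obstacle I anticipate is purely bookkeeping: verifying that the index-degree bound is exactly $0 \le l \le k-3$ rather than $0 \le l \le k-2$, i.e. confirming that the extreme case cannot occur. This should follow because an adjacent transposition of slots $j, j+1$ requires at least one slot to its left (so $j \ge 1$, leaving $\le k-2$ outer derivatives after the two commuted ones) and at least... actually one slot to the *right* or the commuted pair itself consumes two slots, so the outer block has at most $k-2$ derivatives and after the Ricci identity eats the structure one is left with at most $k-3$ on $R$; I would make this precise with a careful slot count, and this is the one place where I would be careful not to hand-wave.
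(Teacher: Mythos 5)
Your proposal matches the paper's proof in every essential respect: reduce to a single adjacent transposition, apply the Ricci commutation identity to the inner block, distribute the outer derivatives through the $\nabla(A\bigstar_k B)=\nabla A\bigstar_{k+1}B + A\bigstar_{k+1}\nabla B$ rule, and telescope over a word in adjacent transpositions, invoking that the $\bigstar_k$ notation is closed under index permutations. The only piece you flag but do not finish is the edge bound $l\le k-3$, and the resolution is exactly the observation you circle around without landing on: if the transposed pair is the innermost pair (in your inside-out labelling, $j=1$, so the inner block is $\nabla^{j-1}u=u$), the commutator vanishes identically because covariant derivatives commute on scalar functions. That is precisely the case that would have produced $l=k-2$, and since it contributes nothing, the bound $l\le k-3$ holds. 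In the paper the same vanishing is built in automatically, since its Ricci-identity sum $\sum_{l=j+2}^{k}R^{\alpha}_{\,i_j i_{j+1} i_l}\cdots$ is empty in the corresponding extreme case.
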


\begin{proof}
Because the statement is immediate for \( k = 0,1,2 \), we operate under the assumption \( k \geq 3 \). We first consider what happens when \( \sigma \) is a transposition of two consecutive elements \( \cycle{j,j+1} \) where \( j \leq k-1 \). In this case, we have
\[ \nabla_{i_1} \dots \nabla_{i_{j-1}}(\nabla_{i_{j}} \nabla_{i_{j+1}}-\nabla_{i_{j+1}} \nabla_{i_j}) \nabla_{i_{j+2}} \dots \nabla_{i_k}u = \nabla_{i_1} \dots \nabla_{i_{j-1}}(\sum_{l=j+2}^{k} R_{i_j i_{j+1} i_l}^\alpha \nabla_{i_{j+2}} \dots \nabla_{i_{l-1}} \nabla_\alpha \nabla_{i_{l +1}} \dots \nabla_{i_k} u) \]
This shows in this case by a Leibniz rule applied to covariant differentiation (see Gavrilov\cite{gav}) that
\[ \nabla^k u - \sigma \cdot \nabla^k u = \sum_{0 \leq l \leq k-3}  \nabla^l R  \bigstar_k   \nabla^{k-2-l} u. \] Now for general \( \sigma \), we write \( \sigma = \tau_q \dots \tau_1\) where \( \tau_i \) is a transposition of consecutive elements. Let us also write \( \sigma_p = \tau_p \dots \tau_1 \) for \( 1 \leq p \leq q \). Then we write \( \nabla^k u - \sigma \cdot \nabla^k u \) as a telescoping sum
\[ \nabla^k u - \sigma_1 \nabla^k u + \sigma_1 \nabla^k u - \sigma_2 \nabla^k u \dots - \sigma_{q-1} \nabla^k u + \sigma_{q-1}\cdot \nabla^k u - \sigma \cdot \nabla^k u  \]
and therefore by applying the case of a transposition to each difference we obtain
\[ \nabla^k u - \sigma \cdot \nabla^k u = \sum_{0 \leq l \leq k-3}  \nabla^l R  \bigstar_k   \nabla^{k-2-l} u . \]

\end{proof}

While Lemmas A.2-A.9 are stated for functions in \( u \in H_k^2(M) \), by density it will suffice to prove them for \( u \in C_c^\infty(M) \), therefore all functions from now on will be assumed smooth.

\begin{lem}
Let \( k \geq 0 \) be an integer. Let 
\[ (k_1, k_2) \in \{ (k, k), (k-1, k+1), (k-1, k)  \}. \]
For each \( i \in \{1, 2 \} \), let \( j_i \leq \frac{k_i}{2} \), let \( T_{j_i} \) be the operator taking a \( k_i \) degree tensor and contracting on the last \( j_i \) pairs of indices, i.e. \( T_{j_i}(\nabla^{k_i} u) = \nabla^{k_i - 2j_i}\Delta^{j_i} u \), and let \( \sigma_i, \sigma_i' \) be permutations on \( \{1, \dots k_i\} \). Let \( S \) be an arbitrary compactly supported tensor of degree \( q \) such that \( q \geq 2|(j_2 - \frac{k_2-k_1}{2})-j_1| \). For each \( k_2 - 2j_2 \) degree covariant tensor \( A \), let \( A \smwhitestar S \) denote some fixed contraction (possibly after raising and lowering indices using the metric) of \( A \otimes S \) of degree \( k_1 - 2 j_1 \) where all contractions either occur within \( S \) or take one index from each of \( A \) and \( S \) i.e. no contractions occur within \( A \). Then there exists some \( C \) based on \( n, k, \max |S|, \max |\nabla S|,\max |\nabla^2 S| \) and bounds for \( R \) and finitely many of its derivatives such that for all \( u \in H_{k_2}^2(M) \),
    \begin{equation}\label{1.2.2} \left| \int_M \langle T_{j_1}(\sigma_1 \cdot \nabla^{k_1} u) , T_{j_2}(\sigma_2 \cdot \nabla^{k_2} u) \smwhitestar S \rangle - \int_M \langle T_{j_1}(\sigma_1' \cdot \nabla^{k_1} u) , T_{j_2}(\sigma_2' \cdot \nabla^{k_2} u) \smwhitestar S \rangle \right| \leq  C \sum_{i=0}^{k-1} \int_{supp(S)} |\nabla^i u|^2  \end{equation}
\end{lem}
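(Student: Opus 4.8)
The plan is to reduce the left-hand side of \eqref{1.2.2} to a sum of terms each of which is controlled by repeated application of Lemma A.1 and integration by parts, keeping careful track of which derivatives land on $u$. The core observation is that the difference of the two integrals can be telescoped: it suffices to bound a single ``swap'' of the form
\[ \int_M \langle T_{j_1}(\sigma_1 \cdot \nabla^{k_1} u), T_{j_2}(\sigma_2 \cdot \nabla^{k_2} u) \smwhitestar S \rangle - \int_M \langle T_{j_1}(\tilde\sigma_1 \cdot \nabla^{k_1} u), T_{j_2}(\tilde\sigma_2 \cdot \nabla^{k_2} u) \smwhitestar S \rangle \]
where $(\tilde\sigma_1,\tilde\sigma_2)$ differs from $(\sigma_1,\sigma_2)$ by precomposing one of the two permutations with a transposition of consecutive indices. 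By Lemma A.1, $\nabla^{k_i} u - \sigma \cdot \nabla^{k_i} u = \sum_{0 \le l \le k_i - 3} \nabla^l R \bigstar_{k_i} \nabla^{k_i - 2 - l} u$, so after applying the contraction operators $T_{j_i}$ and pairing against the other factor, every such swap produces integrals of the schematic form $\int_M \nabla^l R \star (\nabla^a u)(\nabla^b u) \star S'$ (with $S'$ built from $S$ and the metric), where crucially $a + b \le 2k - 2$ after accounting for the two lost derivatives, and where one of $a,b$ is at most $k-2$.

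**Next I would** handle these curvature-error integrals by integrating by parts to move derivatives off the higher-order factor and onto the lower-order factor, the curvature tensor, and $S$. Since one factor already carries at most $k-2$ derivatives, and the total is at most $2k-2$, repeated integration by parts (each step producing further $\nabla R$ and $\nabla S$ terms, all bounded by hypothesis) lets me redistribute until every factor of $u$ carries at most $k-1$ derivatives; Cauchy–Schwarz then bounds each resulting term by $C\sum_{i=0}^{k-1}\int_{\mathrm{supp}(S)} |\nabla^i u|^2$, with $C$ depending only on $n$, $k$, the stated bounds on $S,\nabla S, \nabla^2 S$, and bounds on $R$ and finitely many derivatives. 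The hypothesis $q \ge 2|(j_2 - \tfrac{k_2-k_1}{2}) - j_1|$ on the degree of $S$ is exactly what guarantees the contraction $A \smwhitestar S$ of the prescribed output degree $k_1 - 2j_1$ exists and that enough indices of $S$ remain free to absorb the index mismatch when the derivative counts on the two $u$-factors are unequal; I would verify this bookkeeping holds after each integration by parts as well, noting that integrating by parts shifts a derivative between the factors without changing the parity/degree constraint.

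**The main obstacle** I anticipate is not any single estimate but the combinatorial bookkeeping: verifying that, after applying Lemma A.1 inside $T_{j_i}(\sigma_i \cdot \nabla^{k_i}u)$, the contraction structure $\smwhitestar S$ can still be realized (the resulting tensor must contract down to degree $k_1 - 2j_1$ with $S$, and the transposition may have moved indices in or out of the range being contracted by $T_{j_i}$), and that the derivative-count invariant ``total $\le 2k-2$, one factor $\le k-2$'' is genuinely preserved through both the Lemma A.1 substitution and each subsequent integration by parts. A secondary subtlety is the boundary-term-free integration by parts: this is legitimate because, as remarked after Lemma A.1, it suffices to prove the statement for $u \in C_c^\infty(M)$ and $S$ is compactly supported, so all boundary terms vanish. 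Once the invariant is shown to persist, the final Cauchy–Schwarz step is routine, and only finitely many derivatives of $R$ appear because the number of integration-by-parts steps is bounded in terms of $k$.
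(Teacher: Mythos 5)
Your proposal matches the paper's proof in all essential respects: reduce to curvature error terms via Lemma A.1, integrate by parts to move derivatives off the high-order factor, and finish with Cauchy--Schwarz, with the derivative-count invariant (total $\le 2k-2$, one factor $\le k-2$) doing the bookkeeping. The one inessential difference is that you telescope transposition-by-transposition, whereas the paper needs only a two-step telescope across the two tensor factors (varying $\sigma_2$ holding $\sigma_1$, then varying $\sigma_1$) since Lemma A.1 already handles an arbitrary permutation applied to a single $\nabla^{k_i}u$ in one shot; your extra granularity is harmless but does more work than necessary, and the two-factor add-and-subtract is the step you should really be making explicit, since Lemma A.1 cannot touch both factors simultaneously.
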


\begin{proof} 
We note \( \smwhitestar \) immediately grants the existence of some operator \(  \smwhitestar' \) such that for each \( k_1-2j_1 \) degree covariant tensor \( B \), \(  B \smwhitestar' S \) is some contraction of \( B \otimes S \) of degree \( k_2 - 2j_2 \) (where all contractions are within \( S \) or take one index from \( B \) and one index from \( S \)) such that for all \( k_2-2j_2\) degree covariant tensors \( A \), \[ \langle B, A \smwhitestar S \rangle = \langle B \smwhitestar' S, A \rangle. \]
We prove the statement in the case \( \sigma_1'=\sigma_2' = \text{Id} \), the full statement then follows by a simple application of the triangle inequality.

We have
 \[  \int_M \langle T_{j_1}(\sigma_1 \cdot \nabla^{k_1} u) , T_{j_2}(\sigma_2 \cdot \nabla^{k_2} u) \smwhitestar S \rangle - \int_M \langle T_{j_1}(\nabla^{k_1} u) , T_{j_2}(  \nabla^{k_2} u) \smwhitestar S \rangle  \] \[ = \int_M \langle T_{j_1}(\sigma_1 \cdot \nabla^{k_1} u) , T_{j_2}(\sigma_2 \cdot \nabla^{k_2} u) \smwhitestar S \rangle - \int_M \langle T_{j_1}(\sigma_1 \cdot \nabla^{k_1} u) , T_{j_2}( \nabla^{k_2} u) \smwhitestar S \rangle  \] \[ + \int_M \langle T_{j_1}(\sigma_1 \cdot \nabla^{k_1} u) , T_{j_2}( \nabla^{k_2} u) \smwhitestar S \rangle - \int_M \langle T_{j_1}( \nabla^{k_1} u) , T_{j_2}( \nabla^{k_2} u) \smwhitestar S \rangle.   \]
 Then considering the first difference we have by applying Lemma A.1 and extending our \( \bigstar \) notation such that instances of \( A \bigstar_k^{(i)} B \) represent fixed choices for \( A \bigstar_k B \),
 \begin{align*}
 &\int_M \langle T_{j_1}(\sigma_1 \cdot \nabla^{k_1} u) , T_{j_2}(\sigma_2 \cdot \nabla^{k_2} u) \smwhitestar S \rangle - \int_M \langle T_{j_1}(\sigma_1 \cdot \nabla^{k_1} u) , T_{j_2}( \nabla^{k_2} u) \smwhitestar S \rangle \\
 = &\int_M \langle T_{j_1}(\sigma_1 \cdot \nabla^{k_1} u), T_{j_2} (\sigma_2 \cdot \nabla^{k_2} u - \nabla^{k_2} u) \smwhitestar S  \rangle \\
 = &\int_M \langle T_{j_1}(\sigma_1 \cdot \nabla^{k_1} u), T_{j_2} \left( \sum_{0 \leq l \leq k_2-3}  \nabla^l R  \bigstar_{k_2}^{(1)}   \nabla^{k_2-2-l} u\right) \smwhitestar S  \rangle \\
     = &\int_M \langle T_{j_1}(\sigma_1 \cdot \nabla^{k_1} u), T_{j_2} \left( \sum_{0 \leq l \leq k_2-3}  \nabla^l R  \bigstar_{k_2}^{(1)}   \nabla^{k_2-2-l} u\right) \smwhitestar S  \rangle 
     - \langle T_{j_1} \nabla^{k_1} u, T_{j_2} \left( \sum_{0 \leq l \leq k_2-3}  \nabla^l R  \bigstar_{k_2}^{(1)}   \nabla^{k_2-2-l} u\right) \smwhitestar S \rangle \\
    & \quad\quad\quad\quad\quad\quad\quad\quad\quad\quad\quad+ \int_M \langle T_{j_1} \nabla^{k_1} u, T_{j_2} \left( \sum_{0 \leq l \leq k_2-3}  \nabla^l R  \bigstar_{k_2}^{(1)}   \nabla^{k_2-2-l} u\right)\smwhitestar S \rangle \\
     = &\int_M \langle T_{j_1} \left( \sum_{0 \leq l \leq k_1-3}  \nabla^l R  \bigstar_{k_1}^{(2)}   \nabla^{k_1-2-l} u\right) ,T_{j_2} \left( \sum_{0 \leq l \leq k_2-3}  \nabla^l R  \bigstar_{k_2}^{(1)}   \nabla^{k_2-2-l} u\right) \smwhitestar S \rangle 
     \\
     &\quad\quad\quad\quad\quad\quad\quad\quad\quad\quad\quad+ \int_M \langle \nabla^{k_1-2j_1}  \Delta^{j_1} u,T_{j_2} \left( \sum_{0 \leq l \leq k_2-3}  \nabla^l R  \bigstar_{k_2}^{(1)}   \nabla^{k_2-2-l} u\right) \smwhitestar S \rangle. \\
 \end{align*}
We clearly have \[ \left|\int_M \langle T_{j_1} \left( \sum_{0 \leq l \leq k_1-3}  \nabla^l R  \bigstar_{k_1}^{(2)}   \nabla^{k_1-2-l} u\right) ,T_{j_2} \left( \sum_{0 \leq l \leq k_2-3}  \nabla^l R  \bigstar_{k_2}^{(1)}   \nabla^{k_2-2-l} u\right) \smwhitestar S \rangle\right| \leq C \sum_{i=0}^{k-1} \int_{supp(S)} |\nabla^i u|^2.  \] 
If \( (k_1, k_2) \in \{ (k-1, k+1), (k-1, k) \} \), then we also clearly have 
\[ \left| \int_M \langle \nabla^{k_1-2j_1}  \Delta^{j_1} u,T_{j_2} \left( \sum_{0 \leq l \leq k_2-3}  \nabla^l R  \bigstar_{k_2}^{(1)}   \nabla^{k_2-2-l} u\right) \smwhitestar S \rangle \right| \leq C \sum_{i=0}^{k-1} \int_{supp(S)} |\nabla^i u|^2dv_g. \]
In the case \( (k_1, k_2) = (k,k) \), we need to integrate by parts. If \( 2j_1 = k \) then after integrating by parts and applying the Leibniz rule, we obtain
\begin{align*} 
& \left| \int_M \langle \nabla^{k_1-2j_1}  \Delta^{j_1} u,T_{j_2} \left( \sum_{0 \leq l \leq k_2-3}  \nabla^l R  \bigstar_{k_2}^{(1)}   \nabla^{k_2-2-l} u\right) \smwhitestar S \rangle \right| \\
= &\left| \int_M \langle  \Delta^{k} u,\left( \sum_{0 \leq l \leq k-3}  \nabla^l R  \bigstar_{k-2j_2}   \nabla^{k-2-l} u\right) \smwhitestar S \rangle \right| \\
\leq & \left| \int_M \langle  \nabla \Delta^{k-1} u ,\left( \sum_{0 \leq l \leq k-2}  \nabla^l R  \bigstar_{k-2j_2+1}   \nabla^{k-1-l} u \right) \bigstar_{1} S \rangle \right|   \\
& \quad\quad\quad+  \left| \int_M \langle \nabla \Delta^{k-1} u ,\left( \sum_{0 \leq l \leq k-3}  \nabla^l R\bigstar_{k-2j_2}   \nabla^{k-2-l} u \right) \bigstar_1 \nabla S \rangle  \right| \\
\leq & \, C \sum_{i=0}^{k-1} \int_{supp(S)} |\nabla^i u|^2. \end{align*}
If \( 2j_1 < k \) then a similar computation gives
\begin{align*} &\left| \int_M \langle \nabla^{k-2j_1}  \Delta^{j_1} u,T_{j_2} \left( \sum_{0 \leq l \leq k-3}  \nabla^l R  \bigstar_{k}^{(1)}   \nabla^{k-2-l} u\right) \smwhitestar S \rangle \right| \\
= &\left| \int_M \langle \nabla^{k-2j_1 - 1}  \Delta^{j_1} u, div \left( \left(  \sum_{0 \leq l \leq k-3}  \nabla^l R  \bigstar_{k-2j_2}   \nabla^{k-2-l} u\right) \smwhitestar S \right) \rangle \right|\\
 \leq & \, C \sum_{i=0}^{k-1} \int_{supp(S)} |\nabla^i u|^2 \end{align*}

We also have, applying Lemma A.1,
\begin{align*}
   & \left| \int_M \langle T_{j_1}(\sigma_1 \cdot \nabla^{k} u) , T_{j_2}( \nabla^{k} u) \smwhitestar S \rangle - \int_M \langle T_{j_1}( \nabla^{k} u) , T_{j_2}( \nabla^{k} u) \smwhitestar S \rangle \right| \\ 
   = \, \, & \left| \int_M \langle T_{j_1} \left( \sum_{0 \leq l \leq k-3}  \nabla^l R  \bigstar_k   \nabla^{k-2-l} u\right), T_{j_2}( \nabla^{k} u) \smwhitestar S\rangle \right| \\
   = \, \, & \left| \int_M \langle T_{j_1} \left( \sum_{0 \leq l \leq k-3}  \nabla^l R  \bigstar_k   \nabla^{k-2-l} u\right) \smwhitestar' S, \nabla^{k-2j_2}  \Delta^{j_2} u  \rangle \right| \\
   \leq \, \, &C \sum_{i=0}^{k-1} \int_{supp(S)} |\nabla^i u|^2 
\end{align*}
after integrating by parts similarly to above. Therefore we have obtained \( (\ref{1.2.2}) \)

\end{proof}

\begin{lem} Let \( k \geq 0 \) be an integer and \( \eta \) be an arbitrary compactly supported smooth function.  Then there exists \( C \), based on \( n, k\), and bounds for \( R \) and finitely many of its derivatives, bounds for \( \eta \) and finitely many of its derivatives such that for all \( u \in H_k^2(M) \), 
\[ \left| \int_M \eta | \nabla^k u |^2 - \int_M \eta (\Delta^{\frac{k}{2}} u)^2 \right| \leq C  \sum_{i=0}^{k-1} \int_{supp(\eta)} |\nabla^i u|^2   \]
.
\end{lem}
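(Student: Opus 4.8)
\section*{Proof proposal}

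The plan is to prove the bound by repeated integration by parts (boundary-free, after reducing by density to $u\in C_c^\infty(M)$ as the paper allows), using Lemmas A.1 and A.2 as the two engines. Lemma A.1, i.e.\ the Ricci identity, lets us commute covariant derivatives; every commutator it produces when moving a Laplacian past $k-1$ gradients of a function is of the form $\nabla^a R\,\bigstar\,\nabla^b u$ with $b\le k-1$, hence contributes only to the admissible sum $\sum_{i=0}^{k-1}\int_{supp(\eta)}|\nabla^i u|^2$; write $\mathrm{l.o.t.}$ for any quantity dominated by $C\sum_{i=0}^{k-1}\int_{supp(\eta)}|\nabla^i u|^2$. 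Lemma A.2 lets us freely reorder the covariant derivatives inside a bilinear trace expression $\int_M S\,\bigstar\,(T_{j_1}\nabla^{k_1}u)\,\bigstar\,(T_{j_2}\nabla^{k_2}u)$ modulo $\mathrm{l.o.t.}$; this is the device that breaks the circularity otherwise inherent in the integrations by parts.

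Concretely I would record the two ``anchor'' identities
\[ \int_M \eta\,|\nabla^k u|^2 = \int_M \eta\,u\,\Delta^k u + \mathrm{l.o.t.}, \qquad \int_M \eta\,(\Delta^{k/2}u)^2 = \int_M \eta\,u\,\Delta^k u + \mathrm{l.o.t.}, \]
and subtract them. The second follows by moving one Laplacian at a time from one factor onto the other via the self-adjointness of $\Delta$ with the elementary correction $\int_M\eta(\Delta f)g=\int_M\eta f\Delta g+\int_M(\Delta\eta)fg-2\int_M f\langle\nabla\eta,\nabla g\rangle$; every correction term is either already $\mathrm{l.o.t.}$ or of the shape $\int_M S\,w\,\Delta^l w$ with $w=\Delta^j u$, which integrates by parts into $\int_M S|\nabla\Delta^{j}u|^2$-- and $\int_M(\Delta^a S)(\Delta^b u)^2$--type pieces, all $\mathrm{l.o.t.}$ For the first identity I would integrate by parts once in $\int_M\eta\langle\nabla^k u,\nabla^k u\rangle$: the term in which the moved derivative lands on $\eta$ collapses, via $\langle\nabla\eta,\nabla|\nabla^{k-1}u|^2\rangle$, to $\tfrac12\int_M(\Delta\eta)|\nabla^{k-1}u|^2$, which is $\mathrm{l.o.t.}$, while the principal term is $\int_M\eta\langle\nabla^{k-1}u,\Delta\nabla^{k-1}u\rangle$; commuting the Laplacian inward by Lemma A.1 turns this into $\int_M\eta\langle\nabla^{k-1}u,\nabla^{k-1}\Delta u\rangle$ up to $\mathrm{l.o.t.}$, and one iterates the same two moves.

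The real content — and the step I expect to be the main obstacle — is showing that this iteration terminates without ever leaving behind a term of order $\ge k$ in $u$. The monovariant is the total number of covariant derivatives of $u$ in a given bilinear term: an integration by parts leaves it unchanged, or decreases it when a derivative lands on $\eta$ (or on a curvature factor). Whenever a bilinear term appears with one factor still of order $\ge k$, that factor is a $T_j$-trace of some $\nabla^{k_i}u$ with the traced index pairs sitting ``at the tail''; I would invoke Lemma A.2 to slide those pairs to the front, so the factor becomes $\Delta^{j}$ applied to a lower-rank $\nabla^{\,\cdot}u$, and then integrate the leading $\Delta^{j}$ by parts onto $S$ and onto the other factor. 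This strictly lowers the maximal order (or, in the borderline balanced case, yields exactly the $\tfrac12\langle\nabla\eta,\nabla|\nabla^{k-1}u|^2\rangle$-type term or a term to which Lemma A.2 again applies), so after finitely many steps every bilinear term has both factors of order $\le k-1$ and Cauchy--Schwarz closes the estimate. Keeping track of the indices through all of these reorderings — in particular checking the degree hypothesis $q\ge 2|(j_2-\tfrac{k_2-k_1}{2})-j_1|$ of Lemma A.2 each time the auxiliary tensor $S$ absorbs derivatives of $\eta$ or of the curvature — is the bookkeeping heart of the proof.
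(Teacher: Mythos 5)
Your proposal uses the same core toolbox as the paper (integration by parts plus the commutation Lemmas A.1 and A.2), and the overall spirit is right. What differs is the inductive organization, and that difference is where your proposal has a real gap.

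The paper proves the claim by strong induction on \( k \), descending by two: after Lemma A.2 reorders indices and a pair of integrations by parts, \( \int_M \eta |\nabla^k u|^2 \) is shown to equal \( \int_M \eta |\nabla^{k-2}(\Delta u)|^2 \) modulo admissible terms, and the induction hypothesis (applied to \( \Delta u \)) finishes. Each induction step is a fixed, bounded-length computation whose correction terms are visibly of order \( \le k-1 \). Your plan instead drives \emph{both} \( \int_M\eta|\nabla^k u|^2 \) and \( \int_M\eta(\Delta^{k/2}u)^2 \) all the way down to the common anchor \( \int_M \eta\, u\, \Delta^k u \) before subtracting. That is morally the paper's induction fully unrolled, but now you are responsible for a chain of roughly \( k \) integrations by parts, and you need a termination argument for that chain.

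This is where the gap is. You name your monovariant as ``the total number of covariant derivatives of \( u \) in a given bilinear term,'' and note that an integration by parts leaves it unchanged or decreases it. But ``leaves it unchanged'' is the problem: the principal term after each IBP has the \emph{same} total degree as before (for instance \( \int\eta\,\Delta^a u\,\Delta^b u \to \int\eta\,\Delta^{a+1}u\,\Delta^{b-1}u \)), so the monovariant alone does not terminate. What actually decreases is the \emph{imbalance} \( |a-b| \) in the principal branch, while total degree drops only in the correction branch; the correct bookkeeping is a lexicographic induction on (total degree, imbalance), with the balanced cases \( \int\eta(\Delta^c u)^2 \) and \( \int\eta|\nabla\Delta^c u|^2 \) as the base. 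You gesture at exactly this in your third paragraph (``in the borderline balanced case \dots''), but the argument stops at a heuristic; spelling out the two-variable descent is precisely the content that would complete the proof, and it is also precisely what the paper's induction-on-\( k \) makes unnecessary. Two smaller remarks: your second anchor identity \( \int_M\eta(\Delta^{k/2}u)^2 = \int_M\eta\,u\,\Delta^k u + \text{l.o.t.} \) is essentially the paper's Lemma A.6, which is proved \emph{after} A.3 and depends on it, so you must make explicit that your derivation of it avoids A.3--A.5 (it does, since you only use the product rule and IBP, but this needs saying to avoid apparent circularity); and your claim that the corrections ``integrate by parts into \dots all l.o.t.'' is only true after the recursive descent — each such integration by parts itself spawns further corrections, so ``all l.o.t.'' must be justified by the same induction, not asserted.
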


\begin{proof}
Throughout the proof we use the notation \( A \equiv B \) to mean \[|A - B| \leq C  \sum_{i=0}^{k-1} \int_{supp(\eta)} |\nabla^i u|^2. \] This clearly satisfies the assumptions of an equivalence relation.
We therefore perform the proof of Lemma A.3 by finding expressions \( A_1 \dots A_p \) such that 
\[ \int_M \eta | \nabla^k u |^2 \equiv A_1 \dots \equiv A_p \equiv \int_M \eta (\Delta^{\frac{k}{2}} u)^2. \]

We prove by induction. The statement is immediate for \( k = 0 \) and \( k = 1 \). Now let \( k \geq 2 \) and assume the statement holds true for \( k-2 \). We apply Lemma A.2 to obtain
 \begin{equation}\label{3.1} \int_M \eta | \nabla^k u |^2 = \int_M \eta \nabla^{i_1} \dots \nabla^{i_k} u \nabla_{i_1} \dots \nabla_{i_k} u \equiv \int_M \eta \nabla^{i_k} \nabla^{i_1} \dots \nabla^{i_{k-1}} u \nabla_{i_1} \dots \nabla_{i_k} u 
 \end{equation}
  We then integrate by parts on the right hand side of \( (\ref{3.1}) \) to obtain

 \begin{equation}\label{3.2} \int_M \eta \nabla^{i_k} \nabla^{i_1} \dots \nabla^{i_{k-1}} u \nabla_{i_1} \dots \nabla_{i_k} u =
 - \int_M \nabla^{i_k} \eta \nabla^{i_1} \dots \nabla^{i_{k-1}} u \nabla_{i_1} \dots \nabla_{i_k} u 
  - \int_M \eta \nabla^{i_1} \dots \nabla^{i_{k-1}} u \nabla^{i_k} \nabla_{i_1} \dots \nabla_{i_k} u   \end{equation}
Changing in order of indices on the first term on the right hand side of \( (\ref{3.2}) \) by Lemma A.2 and integrating by parts we obtain

 \begin{align*} - \int_M \nabla^{i_k} \eta \nabla^{i_1} \dots \nabla^{i_{k-1}} &u \nabla_{i_1} \dots \nabla_{i_k} u \equiv - \int_M \nabla^{i_k} \eta \nabla^{i_1} \dots \nabla^{i_{k-1}} u \nabla_{i_k} \nabla_{i_1} \dots \nabla_{i_{k-1}} u  \\
  &= -\int_M \Delta \eta \nabla^{i_1} \dots \nabla^{i_{k-1}} u \nabla_{i_1} \dots \nabla_{i_{k-1}} u + \int_M \nabla^{i_k} \eta \nabla_{i_k} \nabla^{i_1} \dots \nabla^{i_{k-1}} u \nabla_{i_1} \dots \nabla_{i_{k-1}} u    \end{align*}
  and therefore after subtracting the second term over, we obtain 
  \begin{equation}\label{3.3}
  \left|\int_M \nabla^{i_k} \eta \nabla^{i_1} \dots \nabla^{i_{k-1}} u \nabla_{i_1} \dots \nabla_{i_k} u\right| \leq C \sum_{i=0}^{k-1} \int_{supp(\eta)} |\nabla^i u|^2  .\end{equation} 
  Therefore 
  \begin{equation}\label{equiv2} \int_M \eta \nabla^{i_k} \nabla^{i_1} \dots \nabla^{i_{k-1}} u \nabla_{i_1} \dots \nabla_{i_k} u \equiv  - \int_M \eta \nabla^{i_1} \dots \nabla^{i_{k-1}} u \nabla^{i_k} \nabla_{i_1} \dots \nabla_{i_k} u. \end{equation}

 For the right hand side of \( (\ref{equiv2}) \), we once again change the order of the indices with Lemma A.2 and integrate by parts to obtain

 \begin{align} - \int_M  \eta \nabla^{i_1} \dots \nabla^{i_{k-1}}& u \nabla^{i_k} \nabla_{i_1} \dots \nabla_{i_k} u \equiv - \int_M \eta \nabla^{i_1} \dots \nabla^{i_{k-1}} u  \nabla_{i_1} \dots \nabla_{i_{k-1}} \Delta u \notag \\ 
 & \equiv - \int_M \eta \nabla^{i_1} \dots \nabla^{i_{k-1}} u \nabla_{i_{k-1}} \nabla_{i_1} \dots \nabla_{i_{k-2}} \Delta u \notag \\
 &= \int_M \nabla_{i_{k-1}}  \eta \nabla^{i_1} \dots \nabla^{i_{k-1}} u \nabla_{i_1} \dots \nabla_{i_{k-2}} \Delta u + \int_M \eta \nabla_{i_{k-1}} \nabla^{i_1} \dots \nabla^{i_{k-1}} u \nabla_{i_1} \dots \nabla_{i_{k-2}} \Delta u \notag  \\
& \equiv \int_M \nabla_{i_{k-1}} \eta \nabla^{i_1} \dots \nabla^{i_{k-1}} u \nabla_{i_1} \dots \nabla_{i_{k-2}} \Delta u + \int_M \eta  \nabla^{i_1} \dots \nabla^{i_{k-2}}\Delta u \nabla_{i_1} \dots \nabla_{i_{k-2}} \Delta u 
\end{align}

 Then for the first term of \( (4.6) \) we again reorganize the indices to obtain

 \[ \int_M \nabla_{i_{k-1}} \eta \nabla^{i_1} \dots \nabla^{i_{k-1}} u \nabla_{i_1} \dots \nabla_{i_{k-2}} \Delta u \equiv \int_M \nabla_{i_{k-1}} \eta \nabla^{i_1} \dots \nabla^{i_{k-1}} u \Delta \nabla_{i_1} \dots \nabla_{i_{k-2}} u\]
 Integrating by parts gives
 \begin{align*} \int_M \nabla_{i_{k-1}} \eta \nabla^{i_1} \dots \nabla^{i_{k-1}} u \Delta \nabla_{i_1} \dots \nabla_{i_{k-2}} u &= \int_M \nabla^{i_k} \nabla_{i_{k-1}} \eta \nabla^{i_1} \dots \nabla^{i_{k-1}} u  \nabla_{i_{k}}  \nabla_{i_1} \dots \nabla_{i_{k-2}} u  \\
 & + \int_M \nabla_{i_{k-1}} \eta \nabla^{i_k} \nabla^{i_1} \dots \nabla^{i_{k-1}} u \nabla_{i_k}  \nabla_{i_1} \dots \nabla_{i_{k-2}} u \end{align*}
 We clearly have \[  \int_M \nabla^{i_k} \nabla_{i_{k-1}} \eta \nabla^{i_1} \dots \nabla^{i_{k-1}} u  \nabla_{i_{k}}  \nabla_{i_1} \dots \nabla_{i_{k-2}} u  \leq C  \sum_{i=0}^{k-1} \int_{supp(\eta)} |\nabla^i u|^2 \] 
 and by reorganizing indices with Lemma A.2 and then relabeling the indices we obtain 
 \begin{align*} \int_M \nabla_{i_{k-1}} \eta \nabla^{i_k} \nabla^{i_1} \dots \nabla^{i_{k-1}} u \nabla_{i_k}  \nabla_{i_1} \dots \nabla_{i_{k-2}} u &\equiv \int_M \nabla_{i_{k-1}} \eta \nabla^{i_{1}} \dots \nabla^{i_{k-2}} \nabla^{i_{k}} \nabla^{i_{k-1}} u   \nabla_{i_1} \dots \nabla_{i_{k-2}} \nabla_{i_k} u \\
 &= \int_M \nabla^{i_k} \eta \nabla^{i_1} \dots \nabla^{i_{k-1}} u \nabla_{i_1} \dots \nabla_{i_k} u \\
& \leq C  \sum_{i=0}^{k-1} \int_{supp(\eta)} |\nabla^i u|^2 \end{align*} by  \( (\ref{3.3}) \). This shows that
\begin{equation}\label{3.4}
     - \int_M  \eta \nabla^{i_1} \dots \nabla^{i_{k-1}} u \nabla^{i_k} \nabla_{i_1} \dots \nabla_{i_k} u \equiv \int_M \eta \nabla^{i_1} \dots \nabla^{i_{k-2}}\Delta u \nabla_{i_1} \dots \nabla_{i_{k-2}} \Delta u = \int_M \eta |\nabla^{k-2}(\Delta u)|^2
\end{equation}
By the induction hypothesis, we have
\[ \left| \int_M \eta |\nabla^{k-2}(\Delta u)|^2 - \int_M \eta (\Delta^\frac{k}{2} u)^2  \right| \leq C\sum_{i=0}^{k-3} \int_{supp(\eta)} |\nabla^i (\Delta u)|^2 \leq C \sum_{i=0}^{k-1} \int_{supp(\eta)} |\nabla^i u|^2  \]
which implies
 \begin{equation}\label{equiv3}
     \int_M \eta |\nabla^{k-2}(\Delta u)|^2 \equiv \int_M \eta (\Delta^\frac{k}{2} u)^2 .
 \end{equation}
 Therefore by \( (\ref{3.1}),(\ref{equiv2}), (\ref{3.4})\) and \((\ref{equiv3})\) we have proven
Lemma A.3.

\end{proof}

\begin{lem}
    Let \( \eta \) be a compactly supported smooth function, \( k \geq 0 \) be an integer, and \( r > 0 \) be a real number. Then there exists a constant \( C \) based on \( n\), \( k \), bounds for \( R \) and finitely many of its derivatives, \( r \), and bounds for \( \eta \) and finitely many of its derivatives such that for all \( u \in H_k^2(M) \),
   \[ \left| \int_M \eta | \nabla^k u |^2 dv_g - \int_M \eta (\Delta^{\frac{k}{2}} u)^2 \right| \leq C\sum_{i=0}^{k-1} \int_{B_r( supp (\eta))} (\Delta^{\frac{i}{2}} u)^2   \]
\end{lem}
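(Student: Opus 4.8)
The plan is to bootstrap Lemma A.4 from Lemma A.3. Lemma A.3 already gives
\[ \left| \int_M \eta | \nabla^k u |^2 - \int_M \eta (\Delta^{\frac{k}{2}} u)^2 \right| \leq C \sum_{i=0}^{k-1} \int_{supp(\eta)} |\nabla^i u|^2, \]
so it suffices to control each $\int_{supp(\eta)} |\nabla^i u|^2$ with $0\le i\le k-1$ by $\sum_{j=0}^{i}\int_{B_r(supp(\eta))}(\Delta^{\frac{j}{2}}u)^2$; this is an interior ``G\r{a}rding-type'' estimate trading the full iterated gradient for the iterated Laplacian at the cost of slightly enlarging the region of integration. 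I would prove the whole statement by strong induction on $k$, the case $k=0$ being trivial since then $|\nabla^0 u|^2=u^2=(\Delta^0 u)^2$ and the right-hand side is an empty sum.

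For the inductive step, fix $k\ge 1$, the cutoff $\eta$, and $r>0$. For each $0\le i\le k-1$ choose a smooth function $\chi_i$ with $0\le\chi_i\le 1$, $\chi_i\equiv 1$ on $supp(\eta)$, and $supp(\chi_i)\subset B_{r/2}(supp(\eta))$; then $\int_{supp(\eta)}|\nabla^i u|^2\le\int_M\chi_i|\nabla^i u|^2$. Applying the lemma at the lower order $i$ (the induction hypothesis) to $\chi_i$ with radius $r/2$, and using the trivial bounds $\int_M\chi_i(\Delta^{\frac{i}{2}}u)^2\le\int_{B_r(supp(\eta))}(\Delta^{\frac{i}{2}}u)^2$ together with $B_{r/2}(supp(\chi_i))\subset B_{r/2}(B_{r/2}(supp(\eta)))=B_r(supp(\eta))$, yields
\[ \int_M\chi_i|\nabla^i u|^2 \le C\sum_{j=0}^{i}\int_{B_r(supp(\eta))}(\Delta^{\frac{j}{2}}u)^2. \]
Feeding this into the displayed consequence of Lemma A.3 and bounding the resulting double sum $\sum_{i=0}^{k-1}\sum_{j=0}^{i}$ by $k\sum_{j=0}^{k-1}$ gives the claimed inequality, the constant depending on $n$, $k$, $r$, and on bounds for finitely many derivatives of the curvature and of $\eta$, exactly as in Lemma A.3.

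I do not expect a genuine obstacle: the only thing to watch is the bookkeeping of the auxiliary cutoffs, so that all the successively enlarged supports remain inside the one fixed neighborhood $B_r(supp(\eta))$. Halving the margin at each stage handles this, since two consecutive enlargements by $r/2$ compose to an enlargement by $r$, and the induction terminates after at most $k$ steps so the constant stays finite. Equivalently, one could isolate the estimate $\int_M\chi|\nabla^i u|^2\le C\sum_{j\le i}\int_{B_\rho(supp(\chi))}(\Delta^{\frac{j}{2}}u)^2$ as a separate induction on $i$ built directly on Lemma A.3 with a geometrically decreasing sequence of margins $\rho/2,\rho/4,\dots$; this is the same argument organized differently.
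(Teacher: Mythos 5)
Your argument is correct and is essentially the paper's proof of Lemma A.4: strong induction on $k$, apply Lemma A.3, then invoke the induction hypothesis at the lower orders with a cutoff supported in $B_{r/2}(\mathrm{supp}(\eta))$ and radius $r/2$, so that the two half-width enlargements compose to the full $B_r(\mathrm{supp}(\eta))$. The only cosmetic difference is that you introduce a separate auxiliary cutoff $\chi_i$ for each $i$, whereas the paper uses a single $\eta_1$ for all orders; this changes nothing.
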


\begin{proof}
 We prove by strong induction. The statement is immediate for \( k = 0 \) and \( k=1\) as the left hand side vanishes. Now fix \( k \geq 2 \) and assume the statement holds for all \( j < k \). Let \( \eta_1 \) be a smooth nonnegative function such that \( \eta_1 = 1 \) on \( supp(\eta) \) and \( \eta_1 = 0 \) on \( M\setminus B_{\frac{r}{2}}(supp(\eta))\).

Then we apply Lemma A.3 and invoke the (strong) induction hypothesis with \( \eta_1 \) taking the place of \( \eta \) and \( \frac{r}{2} \) taking the place of \( r \) to obtain
\begin{align*}
   \left| \int_M \eta | \nabla^k u |^2- \int_M \eta (\Delta^{\frac{k}{2}} u)^2 \right|  &\leq C \sum_{i=0}^{k-1} \int_{supp(\eta)} |\nabla^i u|^2  \\
    &\leq  C  \sum_{i=0}^{k-1} \int_M \eta_1 |\nabla^i u|^2  \\
    & \leq  C  \sum_{i=0}^{k-1} \left( \int_M \eta_1 (\Delta^\frac{i}{2} u)^2 + \sum_{j=0}^{i-1} \int_{B_\frac{r}{2}(supp(\eta_1))} (\Delta^\frac{j}{2} u)^2 \right) \\
    & \leq C\sum_{i=0}^{k-1} \int_{B_r( supp (\eta))} (\Delta^{\frac{i}{2}} u)^2
\end{align*}
    
\end{proof}

\begin{lem}

Let \( k \geq 0 \) be an integer. Let \( \eta \) be a compactly supported smooth function and \( r > 0 \) be a real number. Then there exists a constant \( C \) based on \( n \), \( k \),  bounds for \( R \) and finitely many of its derivatives, \( r \), and bounds for \( \eta \) and finitely many of its derivatives such that 
 \[ \left| \int_M (\Delta^\frac{k}{2} (\eta u))^2 - \int_M \eta^2 (\Delta^\frac{k}{2} u)^2 \right| \leq C\sum_{i=0}^{k-1} \int_{B_r (supp(\eta))} (\Delta^{\frac{i}{2}} u)^2 \]

 \end{lem}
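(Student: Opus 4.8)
The plan is to argue by strong induction on $k$, exactly parallel to the proof of Lemma A.4. The cases $k=0$ (where the left side vanishes) and $k=1$ are checked directly: for $k=1$ one expands $|\nabla(\eta u)|^2 = \eta^2|\nabla u|^2 + \langle\eta\nabla\eta,\nabla(u^2)\rangle + u^2|\nabla\eta|^2$ and integrates the middle term by parts to get $\int_M(\Delta^{1/2}(\eta u))^2 - \int_M\eta^2(\Delta^{1/2}u)^2 = -\int_M\eta(\Delta\eta)u^2$, which is $\le C\int_{supp(\eta)}u^2$. This base case already displays the core mechanism: after a Leibniz expansion the only dangerous term is a cross term, and it is handled by rewriting it as a divergence.

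For the inductive step, fix $k\ge 2$ and pick smooth compactly supported cutoffs $\tilde\eta,\eta_1$, equal to $1$ on $supp(\eta)$ and supported in $B_{r/2}(supp(\eta))$. Since $\eta u$ and all of its covariant derivatives are supported in $supp(\eta)$, Lemma A.4 applied to the function $\eta u$ with cutoff $\tilde\eta$ and radius $r/2$ reduces $\int_M(\Delta^{k/2}(\eta u))^2$ to $\int_M|\nabla^k(\eta u)|^2$ up to an error $C\sum_{i=0}^{k-1}\int_M(\Delta^{i/2}(\eta u))^2$; applying the induction hypothesis of the present lemma to each of these $k-1$ terms bounds them by $C\sum_{i=0}^{k-1}\int_{B_r(supp(\eta))}(\Delta^{i/2}u)^2$. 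Likewise Lemma A.4 with cutoff $\eta^2$ and radius $r/2$ replaces $\int_M\eta^2(\Delta^{k/2}u)^2$ by $\int_M\eta^2|\nabla^k u|^2$ at the same cost. So the problem reduces to estimating $\bigl|\int_M|\nabla^k(\eta u)|^2 - \int_M\eta^2|\nabla^k u|^2\bigr|$.

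Next I would expand $\nabla^k(\eta u) = \eta\nabla^k u + E$ by the Leibniz rule, where $E$ is a sum of terms $\nabla^l\eta\bigstar_k\nabla^{k-l}u$ with $1\le l\le k$, so every summand of $E$ has at most $k-1$ derivatives falling on $u$, with coefficients bounded in terms of $\eta$ and finitely many of its derivatives. Then $\int_M|\nabla^k(\eta u)|^2 - \int_M\eta^2|\nabla^k u|^2 = 2\int_M\eta\langle\nabla^k u,E\rangle + \int_M|E|^2$. The term $\int_M|E|^2$ is pointwise $\le C\sum_{i=0}^{k-1}|\nabla^i u|^2$ on $supp(\eta)$, and each $\int_{supp(\eta)}|\nabla^i u|^2$ is controlled by $C\sum_{j=0}^{i}\int_{B_r(supp(\eta))}(\Delta^{j/2}u)^2$ using Lemma A.4 with cutoff $\eta_1$. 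For the cross term, the contributions with $l\ge 2$ have the factor $\nabla^{k-l}u$ carrying at most $k-2$ derivatives of $u$; after reordering the indices of $\nabla^k u$ via Lemma A.2 (the curvature corrections lower the order on $u$ by two and are harmless) I integrate by parts once, so a derivative comes off $\nabla^k u$ and lands either on $\eta\nabla^l\eta$ or on $\nabla^{k-l}u$, leaving at most $k-1$ derivatives of $u$ in every resulting term, which are then estimated as above.

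The main obstacle is the $l=1$ contribution, where $\nabla^k u$ is paired with $\nabla\eta\bigstar_k\nabla^{k-1}u$ and is genuinely top-order: a naive Cauchy--Schwarz would leave behind an uncontrolled $\int\eta^2|\nabla^k u|^2$. The resolution, as in the $k=1$ case, is to use Lemma A.2 to move the distinguished index of $\nabla^k u$ into the slot that differentiates $\nabla^{k-1}u$, which identifies each such pairing with $\tfrac12\langle\nabla\eta,\nabla(|\nabla^{k-1}u|^2)\rangle$ modulo a term of the form $\nabla^j R\bigstar(\nabla^{k-1}u\otimes\nabla^{k-2-j}u)$ that is lower order in $u$; then $\int_M\langle\eta\nabla\eta,\nabla(|\nabla^{k-1}u|^2)\rangle = -\int_M div(\eta\nabla\eta)|\nabla^{k-1}u|^2$ after one integration by parts, involving only $k-1$ derivatives of $u$. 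Assembling the pieces and applying Lemma A.4 one more time closes the induction. Beyond this cross term the only real work is bookkeeping: the applications of Lemmas A.1 and A.2 generate numerous curvature correction terms, but each of them strictly drops the number of derivatives acting on $u$, so they are all absorbed into the lower-order sum $C\sum_{i=0}^{k-1}\int_{B_r(supp(\eta))}(\Delta^{i/2}u)^2$.
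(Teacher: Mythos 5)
Your argument is correct and reaches the same conclusion, but the route is genuinely different from the paper's. The paper works directly in the \( \Delta^{k/2} \)-picture: it introduces the operators \( T_0=\mathrm{Id}, T_1=\nabla, T_2=\Delta \) and the explicit Leibniz-type formula \( \Delta^l(\eta u)=\sum_{\beta\in\{0,1,2\}^l}(-2)^{j_\beta}\langle T_\beta u, T_{\beta'}\eta\rangle_g \) (formula (5.1.1)), squares it, and runs a case analysis on the multi-index weights \( |\alpha|,|\beta| \) (three cases for \( k \) even, four for \( k \) odd), calling Lemma A.4 only to convert lower-order \( \int|\nabla^i u|^2 \) errors into \( \int(\Delta^{i/2}u)^2 \). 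You instead invoke Lemma A.4 \emph{first}, at the top order, to trade both \( \int(\Delta^{k/2}(\eta u))^2 \) and \( \int\eta^2(\Delta^{k/2}u)^2 \) for \( \int|\nabla^k(\eta u)|^2 \) and \( \int\eta^2|\nabla^k u|^2 \) respectively (absorbing the errors via strong induction on the lemma itself), and then run the standard full-covariant-derivative Leibniz expansion \( \nabla^k(\eta u)=\eta\nabla^k u+E \). This eliminates the even/odd split entirely and isolates the single genuinely dangerous cross term, the \( l=1 \) piece pairing \( \nabla^k u \) against \( \nabla\eta\bigstar_k\nabla^{k-1}u \), which you then kill by the same divergence trick \( \langle\nabla^k u,\cdot\rangle \leadsto \tfrac12\nabla(|\nabla^{k-1}u|^2) \) plus one integration by parts — structurally the same move the paper's Case 3 makes with \( \frac14\int(\Delta^{(2l-1)/2}u)^2\Delta(\eta^2) \). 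Your approach is more modular and shorter to state; the paper's is more explicit and self-contained, since it never needs to pass through the \( |\nabla^k| \) picture at the top order. Both are sound. (Minor: in your \( k=1 \) base case the sign of the boundary term depends on the Laplacian convention; with the paper's \( \Delta=-\nabla^i\nabla_i \) it comes out as \( +\int\eta(\Delta\eta)u^2 \), but this does not affect the estimate.)
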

 
 \begin{proof}
    First suppose \( k \) is even. We write \( k=2l\).
 We recall for any tensors \( S_1,S_2 \) of the same type we have
\[ \Delta \langle S_1, S_2 \rangle = \langle S_1 \Delta S_2 \rangle - 2 \langle \nabla S_1, \nabla S_2 \rangle + \langle S_2 \Delta S_1 \rangle \]

Let us define operators  \( T_{i} \) on tensors for \( i \in \{0, 1, 2\} \) by \( T_0 = \text{Id} \), \( T_1 = \nabla \), \( T_2 = \Delta \). Then for any multi-index \( \beta = (\beta_1, \dots, \beta_l) \in \{0, 1, 2\}^l \), we define \( T_\beta = T_{\beta_l} \dots T_{\beta_1} \). Given \( \beta \), we define \( \beta' = (2 - \beta_1, \dots, 2 - \beta_l) \). Finally we define \( j_\beta \) to be the number of \( 1 \)'s appearing in \( \beta \). We note \( j_\beta = j_{\beta'} \), therefore if \( S_1, S_2 \) are tensors of the same type, so are \( T_\beta S_1 \) and \( T_{\beta'}S_2 \). Then we write the \( l \)th Laplacian of the product of two functions as

\begin{equation}\label{5.1.1} \Delta^l (\eta u) = \sum_{\beta \in \{0, 1, 2\}^l } (-2)^{j_\beta} \langle T_\beta u, T_{\beta'} \eta \rangle_g. \end{equation}
This formula can easily be proven by induction.

We then have
\begin{equation}\label{5.1.2} \int_M (\Delta^l (\eta u))^2 = \int_M \sum_{\alpha, \beta \in \{0, 1, 2\}^l } (-2)^{j_\beta + j_\alpha} \langle T_\beta u, T_{\beta'} \eta \rangle_g \langle T_\alpha u, T_{\alpha'} \eta \rangle_g \end{equation}
It follows that to estimate \( \left| \int_M (\Delta^l (\eta u))^2 - \int_M \eta^2 (\Delta^l u)^2 \right|  \), it suffices to estimate the terms on the right hand side of \( (\ref{5.1.2}) \) apart from the principal term of \( \int_M \eta^2 (\Delta^l u)^2 \) corresponding to the case \( |\alpha| = |\beta| = 2l \). We separate this sum into three cases modulo symmetry in \( \alpha \) and \( \beta \).

\textbf{Case 1:} \( |\alpha| \leq 2l-1 \) and \( |\beta| \leq 2l-1 \). In this case, on \( supp(\eta) \), we have \begin{align*} \left| (-2)^{j_\beta + j_\alpha} \langle T_\beta u, T_{\beta'} \eta \rangle_g \langle T_\alpha u, T_{\alpha'} \eta \rangle_g \right| & \leq C |T_\alpha u||T_\beta u| \\
& \leq C(|T_\alpha u|^2 + |T_\beta u|^2) \\ 
& \leq C(|\nabla^{|\alpha|} u|^2 + |\nabla^{|\beta|} u|^2)
\end{align*}
Letting \( \eta_1 \) be a smooth nonnegative function such that \( \eta_1 = 1 \) on \( supp(\eta) \) and \( \eta_1 = 0 \) on \( M\setminus B_{\frac{r}{2}}(supp(\eta))\) and applying Lemma A.4 with \( \frac{r}{2} \) as our value of \( r \) results in
\[ \left| \int_M (-2)^{j_\beta + j_\alpha} \langle T_\beta u, T_{\beta'} \eta \rangle_g \langle T_\alpha u, T_{\alpha'} \eta \rangle_g \right|  \leq C \int_M \eta_1 (|\nabla^{|\alpha|} u|^2 + |\nabla^{|\beta|} u|^2) \leq C\sum_{i=0}^{2l-1} \int_{B_r (supp(\eta))} (\Delta^{\frac{i}{2}} u)^2  \]

\textbf{Case 2:} \( |\alpha| \leq 2l-2 \) and \( |\beta| = 2l \). In this case we integrate by parts and argue as in Case 1 to obtain
\begin{align*} \left| \int_M (-2)^{j_{\alpha}} \eta \Delta^{l} u \langle T_\alpha u, T_{\alpha'} \eta \rangle_g \right| &= \bigg|(-2)^{j_\alpha} \int_M  \langle \nabla \Delta^{l-1} u, \nabla \eta \rangle \langle T_\alpha u, T_{\alpha'} \eta \rangle +   (-2)^{j_{\alpha}} \int_M \eta \langle \nabla^i \Delta^{l-1} u \nabla_i T_\alpha u, T_{\alpha'}\eta \rangle_g \\
& \;\;\;\;\;\;\;\;\;\;\;\;\;\;\;\;\; \quad\quad\quad\quad
+  (-2)^{j_{\alpha}} \int_M \eta \langle \nabla^i \Delta^{l-1} u \nabla_i T_{\alpha'} \eta, T_{\alpha} u \rangle_g \bigg| \\
& \leq C\sum_{i=0}^{2l-1} \int_{B_r (supp(\eta))} (\Delta^{\frac{i}{2}} u)^2
\end{align*}

\textbf{Case 3:} \( |\alpha| = 2l-1 \) and \( |\beta| = 2l \).  In this case we have

\[ \int_M (-2)^{j_\beta + j_\alpha} \langle T_\beta u, T_{\beta'} \eta \rangle_g \langle T_\alpha u, T_{\alpha'} \eta \rangle_g =  -2\int_M \eta \Delta^{l} u \langle \Delta^{\gamma_1}  \nabla \Delta^{\gamma_2} u ,\nabla \eta 
 \rangle \] for some \( \gamma_1 + \gamma_2 = l-1 \). In the following computation (and the rest of the proof), we extend our notation \( A \equiv B \) from the proof of Lemma A.3 to mean \[ |A - B| \leq C  \sum_{i=0}^{2l-1} \int_{supp(\eta)} |\nabla^i u|^2 \leq C  \sum_{i=0}^{2l-1} \int_{B_r(supp(\eta))} |\Delta^\frac{i}{2} u|^2 \] where the last inequality is by an application of Lemma A.4.
Integrating by parts and reorganizing the indices by Lemma A.2 we obtain
\begin{align*}  \int_M \eta \Delta^{l} u \langle \Delta^{\gamma_1} \nabla  \Delta^{\gamma_2} u, \nabla \eta \rangle &= \frac{1}{2} \int_M \Delta^{l} u \langle \Delta^{\gamma_1}  \nabla \Delta^{\gamma_2} u, \nabla (\eta^2) \rangle \\
&\equiv \frac{1}{2} \int_M \Delta^{l} u \langle \nabla \Delta^{l-1} u \nabla (\eta^2) \rangle \\
 &= \frac{1}{2} \int_M \langle \nabla^2\Delta^{l-1} u, \nabla \Delta^{l-1} u \otimes \nabla (\eta^2)\rangle+ \frac{1}{2}\int_M  \langle \nabla^2 (\eta^2),\nabla\Delta^{l-1} u \otimes \nabla\Delta^{l-1} u \rangle  \\
 &= \frac{1}{4} \int_M (\Delta^{\frac{2l-1}{2}} u)^2 \Delta (\eta^2)  + \frac{1}{2}\int_M \langle \nabla \Delta^{l-1} u \otimes \nabla\Delta^{l-1} u, \nabla^2 (\eta^2) \rangle .\\  
  \end{align*}
Arguing as in Case 1 then shows
\[ \left| \int_M (-2)^{j_\beta + j_\alpha} \langle T_\beta u, T_{\beta'} \eta \rangle_g \langle T_\alpha u, T_{\alpha'} \eta \rangle_g  \right| \leq  C\sum_{i=0}^{2l-1} \int_{B_r (supp(\eta))} (\Delta^{\frac{i}{2}} u)^2 \]
and therefore Lemma A.5 holds.

    Now suppose \( k \) is odd. We write \( k = 2l+1 \). Using our computations for \( \Delta^l (\eta u) \), we obtain
\begin{align*} \int_M |\nabla \Delta^l (\eta u)|^2 &= \int_M \sum_{\alpha, \beta \in \{0, 1, 2\}^l } (-2)^{j_\beta + j_\alpha}\left( \langle \nabla^i T_\beta u, T_{\beta'} \eta \rangle_g \langle \nabla_i T_\alpha u, T_{\alpha'} \eta \rangle_g  + \langle T_\beta u, \nabla^i T_{\beta'} \eta \rangle_g \langle \nabla_i T_\alpha u, T_{\alpha'} \eta \rangle_g \right) dv_g \\
& + \int_M \sum_{\alpha, \beta \in \{0, 1, 2\}^l } (-2)^{j_\beta + j_\alpha}\left( \langle \nabla^i T_\beta u, T_{\beta'} \eta \rangle_g \langle  T_\alpha u, \nabla_i T_{\alpha'} \eta \rangle_g  + \langle T_\beta u, \nabla^i T_{\beta'} \eta \rangle_g \langle  T_\alpha u, \nabla_i T_{\alpha'} \eta \rangle \right) dv_g .
\end{align*}

We once again have a principal term of \( \int_M \eta^2 |\nabla \Delta^l u|^2\) as the first of the four terms when \( |\alpha| = |\beta| = 2l. \) Therefore to prove Lemma A.5 we once again split the other terms into cases modulo symmetry and estimate.

\textbf{Case 1:} \( |\alpha| \leq 2l - 1 \) and \( |\beta| \leq 2l-1 \). The details are virtually identical to the proof of Case 1 when \( k \) is even.

\textbf{Case 2:} \( |\alpha| \leq 2l-2 \) and \( \beta = 2l \). In this case, we can still apply the argument from Case 1 to the second term in each row of the above formula. We perform the argument from Case 2 when k is even on the first term in the first row, the first term in the second row is handled similarly. Integrating by parts and estimating as in Case 1 results in
\begin{align*} \left| (-2)^{j_\alpha}\int_M \eta \nabla^i \Delta^l u \langle \nabla_i T_\alpha u, T_{\alpha'} \eta \rangle_g \right| &= \bigg| (-2)^{j_\alpha}\int_M  \Delta^l u \nabla^i \eta \langle \nabla_i T_\alpha u, T_{\alpha'} \eta \rangle_g + (-2)^{j_\alpha} \int_M \eta  \Delta^l u \langle \Delta T_\alpha u, T_{\alpha'} \eta \rangle_g \\
&\;\;\;\;\;\;\;\;\;\;\;\;\;\;\;\;\; \quad\quad\quad\quad + (-2)^{j_\alpha}\int_M \eta \Delta^l u \langle \nabla T_\alpha u, \nabla T_{\alpha'} \eta \rangle_g \bigg| \\
& \leq C\sum_{i=0}^{2l} \int_{B_r (supp(\eta))} (\Delta^{\frac{i}{2}} u)^2
\end{align*}

\textbf{Case 3:} \( |\alpha| = 2l - 1\) and \( |\beta| = 2l \). Once again the second term in each row of the sum can be estimated using the methods from Case 1. The first term in the second row can be estimated using the method from Case 2, leaving us with only one term to consider. We calculate, writing \( T_\alpha = \Delta^{\gamma_1} \nabla \Delta^{\gamma_2} \) where \( \gamma_1 + \gamma_2 = l-1 \) as in Case 3 of (i), and integrating by parts and changing the order of indices with Lemma A.2,
\begin{align*} \int_M \eta \nabla^i \Delta^l u \nabla_i \Delta^{\gamma_1} \nabla^j \Delta^{\gamma_2} u \nabla_j \eta &=  \frac{1}{2} \int_M  \nabla^i \Delta^l u \nabla_i \Delta^{\gamma_1} \nabla^j \Delta^{\gamma_2} u \nabla_j (\eta^2) \\
&\equiv \frac{1}{2}\int_M \nabla^i \Delta^l u \nabla_i \nabla^j \Delta^{l-1} u \nabla_j (\eta^2) \\ 
& = \frac{1}{2}\int_M  \Delta^l u \langle \Delta  \nabla \Delta^{l-1} u \nabla (\eta^2) \rangle - \frac{1}{2}\int_M   \Delta^l u \langle \nabla^2 \Delta^{l-1} u ,\nabla^2 (\eta^2) \rangle \\
& \equiv \frac{1}{2}\int_M  \Delta^l u \langle \nabla  \Delta^{l} u \nabla (\eta^2) \rangle - \frac{1}{2}\int_M \Delta^l u  \langle \nabla^2 \Delta^{l-1} u ,\nabla^2 (\eta^2)\rangle \\
& = \frac{1}{4} \int_M (\Delta^l u)^2 \Delta (\eta^2) - \frac{1}{2}\int_M \Delta^l u \langle \nabla^2 \Delta^{l-1} u, \nabla^2 (\eta^2) \rangle \\
& \leq C\sum_{i=0}^{2l} \int_{B_r (supp(\eta))} (\Delta^{\frac{i}{2}} u)^2
\end{align*}

\textbf{Case 4:} \( |\alpha| = 2l\) and \( |\beta| = 2l \).
The first term is the previously mentioned principal term. The second term in the first row and first term in the second row can be estimated using the method from Case 3. The second term in the second row can be estimated using the method from Case 1. This completes the proof of Lemma A.5.

\end{proof}

\begin{lem}

Let \( \eta \) be a compactly supported smooth function and \( r > 0 \) be a real number. Let \( i_1, i_2, j_1, j_2 \in \frac{1}{2} \mathbb{N} \) be such that \( i_2 + i_2  = j_1 + j_2 = k \). Then there exists a constant \( C \) based on \( n \), \( k \),  bounds for \( R \) and finitely many of its derivatives, \( r \), and bounds for \( \eta \) and finitely many of its derivatives such that 
\[ \left| \int_M \eta \Delta^{i_1} u \Delta^{i_2} u - \int_M \eta \Delta^{j_1} u \Delta^{j_2} u \right| \leq C\sum_{i=0}^{k-1} \int_{B_r (supp(\eta))} (\Delta^{\frac{i}{2}} u)^2 .  \]

\end{lem}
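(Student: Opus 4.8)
\emph{Plan.} The mechanism behind the lemma is that for \emph{compactly supported} functions $f,g$ the integral $\int_M\Delta^{i_1}f\,\Delta^{i_2}g$ equals $\int_M f\,\Delta^{k}g$ for every admissible splitting $i_1+i_2=k$: this is repeated integration by parts with no boundary terms and no weight, using $\int_M\Delta^a f\,\Delta^{b+1}g=\int_M\Delta^{a+1}f\,\Delta^b g$ together with $\int_M\langle\nabla\Delta^a f,\nabla\Delta^b g\rangle=\int_M\Delta^a f\,\Delta^{b+1}g$ in the half-integer cases. So the content of the lemma is to pull the weight $\eta$ out and to replace $u$ by a compactly supported function, paying the right-hand side as an error. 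First I would fix a smooth cutoff $\zeta$ with $\zeta\equiv 1$ on a neighbourhood of $\mathrm{supp}(\eta)$ and with $\mathrm{supp}(\zeta)$ in a small neighbourhood of $\mathrm{supp}(\eta)$ inside $B_r(\mathrm{supp}\,\eta)$, and set $v=\zeta u$. Since $\Delta^{i_1},\Delta^{i_2}$ are local operators and $\zeta\equiv1$ near $\mathrm{supp}(\eta)$, we have $\int_M\eta\,\Delta^{i_1}u\,\Delta^{i_2}u=\int_M\eta\,\Delta^{i_1}v\,\Delta^{i_2}v$ with $v$ compactly supported.

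Next I would expand $\Delta^{i_1}(\eta v)$ by the Leibniz rule for iterated Laplacians (as in the proof of Lemma A.5), isolating the principal term $\eta\,\Delta^{i_1}v$. This produces
\[
\int_M\eta\,\Delta^{i_1}v\,\Delta^{i_2}v=\int_M\Delta^{i_1}(\eta v)\,\Delta^{i_2}v-E_{i_1},\qquad \int_M\Delta^{i_1}(\eta v)\,\Delta^{i_2}v=\int_M(\eta v)\,\Delta^{k}v,
\]
where $E_{i_1}$ is the sum of the non-principal terms, each of the form $\int_M\langle T\eta,\;Sv\rangle\,\Delta^{i_2}v$ with $T\eta$ involving at least one covariant derivative of $\eta$ (so that $Sv$ carries at most $2i_1-1$ covariant derivatives of $v$ and $\Delta^{i_2}v$ carries $2i_2$), while $\int_M(\eta v)\,\Delta^k v$ is visibly independent of the splitting. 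Hence
\[
\Bigl|\int_M\eta\,\Delta^{i_1}u\,\Delta^{i_2}u-\int_M\eta\,\Delta^{j_1}u\,\Delta^{j_2}u\Bigr|\le|E_{i_1}|+|E_{j_1}|,
\]
and everything reduces to the bound $|E_{i_1}|\le C\sum_{i=0}^{k-1}\int_{B_r(\mathrm{supp}\,\eta)}(\Delta^{i/2}v)^2$, which upon applying Lemma A.5 once more (to replace $\Delta^{i/2}v=\Delta^{i/2}(\zeta u)$ by $\Delta^{i/2}u$ on $B_r(\mathrm{supp}\,\eta)$) is the desired estimate.

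The main obstacle is precisely the estimation of $E_{i_1}$. A term of $E_{i_1}$ carrying at most $2k-2$ covariant derivatives of $v$ in total is harmless: using Lemma A.2 to reorder covariant derivatives (which only creates strictly lower order curvature terms) one redistributes the derivatives so each factor carries at most $k-1$ of them, applies Cauchy--Schwarz, and finishes with Lemma A.4. The delicate terms are those carrying exactly $2k-1$ derivatives of $v$, where one factor necessarily has $k$ or more derivatives and Cauchy--Schwarz alone fails; these must be integrated by parts. Each such integration by parts either moves a derivative onto a derivative of $\eta$, strictly lowering the total $v$-order and so landing in the harmless case (or in a term $\int_M(\text{derivative of }\eta)\,\Delta^{a}v\,\Delta^{b}v$ with $a+b=k-1$, which the $k-1$ case of the lemma handles), or else rebalances the two factors; the borderline configuration $\int_M\langle W,\nabla\Delta^{(k-1)/2}v\rangle\,\Delta^{(k-1)/2}v$, with $W$ a fixed compactly supported one-form obtained from $\eta$, is closed off using $\langle W,\nabla w\rangle\,w=\tfrac12\langle W,\nabla(w^2)\rangle$ and one further integration by parts. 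Thus the whole argument is an induction on $k$ (the cases $k=0,1$ being immediate), with an inner induction on how far $(i_1,i_2)$ is from balanced; alternatively one can run essentially the same induction directly on $u$, peeling off one half-Laplacian at a time, i.e. reducing $\int_M\eta\,\Delta^{i_1}u\,\Delta^{i_2}u$ to $\int_M\eta\,\Delta^{i_1+1/2}u\,\Delta^{i_2-1/2}u$ and controlling the single correction $\int_M\langle\nabla\eta,\nabla\Delta^{a}u\rangle\Delta^{b}u$ with $a+b=k-1$ by the same bookkeeping.
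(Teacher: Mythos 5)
Your proposal is correct and rests on the same core toolbox as the paper's proof, but you structure the argument differently. The paper peels off a single step at a time: it first proves the two one‑step estimates
\[
\Bigl|\int_M \eta\, u\,\Delta^k u - \int_M \eta\,\Delta u\,\Delta^{k-1}u\Bigr|\le C\sum_{i=0}^{k-1}\int_{B_r(\mathrm{supp}\,\eta)}(\Delta^{i/2}u)^2,
\qquad
\Bigl|\int_M \eta\, u\,\Delta^k u - \int_M \eta\,\langle\nabla u,\nabla\Delta^{k-1}u\rangle\Bigr|\le C\sum_{i=0}^{k-1}\int_{B_r(\mathrm{supp}\,\eta)}(\Delta^{i/2}u)^2,
\]
by expanding $\Delta(\eta u)=\eta\Delta u - 2\langle\nabla\eta,\nabla u\rangle + u\Delta\eta$ and controlling the two error terms (the first via the half‑Laplacian rewrite and Lemma~A.4, the second via the Leibniz formula~(\ref{5.1.1}), Lemma~A.2 to reorder indices, and the identity $\langle\nabla\eta,\nabla\Delta^{(k-1)/2}u\rangle\Delta^{(k-1)/2}u=\tfrac12\langle\nabla\eta,\nabla(\Delta^{(k-1)/2}u)^2\rangle$ for the borderline term), then closes the general statement by a short induction. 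Your primary route instead expands $\Delta^{i_1}(\eta v)$ all at once, uses that $\int_M\Delta^{i_1}(\eta v)\,\Delta^{i_2}v=\int_M(\eta v)\,\Delta^k v$ is split‑independent, and estimates the whole error $E_{i_1}$ via a case split on the total derivative count. This is a valid alternative but trades the paper's clean one‑step induction for heavier casework; in fact, the ``peel off one half‑Laplacian at a time'' variant you mention at the end is precisely the paper's argument, and you correctly identify the key mechanism at the borderline (the $\langle W,\nabla w\rangle w=\tfrac12\langle W,\nabla(w^2)\rangle$ trick) that both routes need.

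Two minor points. The auxiliary cutoff $\zeta$ and the substitution $v=\zeta u$ are unnecessary: all the error terms in the paper's computation are already weighted by $\eta$ or its derivatives, so the localization to $B_r(\mathrm{supp}\,\eta)$ comes for free from Lemma~A.4; introducing $\zeta$ just adds a second application of Lemma~A.5 at the end that can be avoided. Also note a sign slip: with your definition of $E_{i_1}$ as the non‑principal Leibniz terms integrated against $\Delta^{i_2}v$, the identity should read $\int_M\eta\,\Delta^{i_1}v\,\Delta^{i_2}v=\int_M\Delta^{i_1}(\eta v)\,\Delta^{i_2}v-E_{i_1}$ only if $E_{i_1}$ carries the opposite sign to how you describe it; this is cosmetic since you only use $|E_{i_1}|$.
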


\begin{proof}
We prove \begin{equation}\label{ibp1}\left| \int_M \eta u \Delta^k u - \int_M \eta \Delta u \Delta^{k-1} u \right| \leq C\sum_{i=0}^{k-1} \int_{B_r (supp(\eta))} (\Delta^{\frac{i}{2}} u)^2\end{equation}
and
\begin{equation}\label{ibp2}\left| \int_M \eta u \Delta^k u - \int_M \eta \langle \nabla u, \nabla \Delta^{k-1} u \rangle \right| \leq C\sum_{i=0}^{k-1} \int_{B_r (supp(\eta))} (\Delta^{\frac{i}{2}} u)^2 .\end{equation}
The full statement then follows by a simple induction.

 We calculate

 \begin{equation}\label{5.3.1} \int_M \eta u \Delta^k u = \int_M \Delta(\eta u) \Delta^{k-1}u = \int_M \eta \Delta u \Delta^{k-1} u - 2\int_M \langle \nabla \eta, \nabla u \rangle \Delta^{k-1} u + \int_M u \Delta \eta  \Delta^{k-1} u.   \end{equation}
 We integrate by parts on the final term of \( (\ref{5.3.1}) \) and we have
 \[  \int_M u\Delta \eta \Delta^{k-1} u = \int_M \Delta^\frac{k-1}{2} (u \Delta \eta ) \Delta^\frac{k-1}{2} u \leq C\sum_{i=0}^{k-1} \int_{B_r( supp (\eta))} (\Delta^{\frac{i}{2}} u)^2 \]
 after expanding with \( (\ref{5.1.1}) \) and applying Lemma A.4.
 For the second term of \( (\ref{5.3.1}) \), we integrate by parts and apply \( (\ref{5.1.1}) \) to obtain
 \begin{align*}
     2\int_M \langle \nabla \eta, \nabla u \rangle \Delta^{k-1} u &= 2 \int_M \Delta^{\frac{k-1}{2}} \langle\nabla \eta ,\nabla u \rangle \Delta^{\frac{k-1}{2}} u \\
     &= 2\int_M \langle \nabla \eta ,\Delta^{\frac{k-1}{2}} \nabla u\rangle \Delta^{\frac{k-1}{2}} u + \int_M \sum_{\beta \in \{0, 1, 2\}^k \setminus \{ (2, \dots, 2\} } (-2)^{j_\beta} \langle T_\beta u, T_{\beta'} \eta \rangle_g\Delta^{\frac{k-1}{2}} u 
 \end{align*}
 and computations as in the proof of Case 1 in Lemma A.5 show \[  \int_M \sum_{\beta \in \{0, 1, 2\}^k \setminus \{ (2, \dots, 2\} } (-2)^{j_\beta} \langle T_\beta u, T_{\beta'} \eta \rangle_g\Delta^{\frac{k-1}{2}} u  \leq  C\sum_{i=0}^{k-1} \int_{B_r( supp (\eta))} (\Delta^{\frac{i}{2}} u)^2.\]
For the remaining term,  we reorganize the indices with Lemma A.2 to obtain
 \[ \int_M \langle \nabla \eta ,\Delta^{\frac{k-1}{2}} \nabla u \rangle \Delta^{\frac{k-1}{2}} u \equiv \int_M \langle \nabla \eta, \nabla \Delta^{\frac{k-1}{2}}  u \rangle \Delta^{\frac{k-1}{2}} u = \frac{1}{2} \int_M \Delta \eta (\Delta^{\frac{k-1}{2}} u )^2 \leq C \int_{supp(\eta)} (\Delta^\frac{k-1}{2} u)^2  \]
 and we have shown \( (\ref{ibp1}) \).
    
    For \( (\ref{ibp2} \), we integrate by parts twice to obtain
    \begin{align*} \int_M \eta u \Delta^k u &= \int_M \langle \nabla (\eta u), \nabla \Delta^{k-1} u \rangle \\
    &= \int_M \eta \langle  \nabla u, \nabla \Delta^{k-1} u\rangle + \int_M u\langle \nabla \eta ,\nabla \Delta^{k-1} u \rangle \\
    &=  \int_M \eta\langle \nabla u ,\nabla \Delta^{k-1} u\rangle + \int_M \langle \nabla u ,\nabla \eta \rangle \Delta^{k-1} u +  \int_M u \Delta \eta \Delta^{k-1} u . 
    \end{align*}
    The latter two terms are identical to the latter two terms of \( (\ref{5.3.1}) \) and we conclude by applying the same computations.
\end{proof}

 \begin{lem}
   Let \( k \geq 0 \) be an integer and \( i_1, j_1, i_2, j_2\) be integers such such that \( i_1 + i_2 = j_1 + j_2 = k \). Let \( r > 0 \) be a real number. Let \( \eta \) be a compactly supported smooth function. Then there exists \( C \) based on \( n, k, r \), bounds for \( R \) and finitely many of its derivatives, and bounds for \( \eta \) and finitely many of its derivatives such that for all \( u \in H_k^2(M) \) and \( \beta \geq 0 \),

     \[  \int_M \eta (\Delta + \beta)^{i_1}u (\Delta + \beta)^{i_2} u \leq \int_M \eta (\Delta + \beta)^{j_1}u (\Delta + \beta)^{j_2} u + C \int_{B_r (supp(\eta))} \sum_{i=0}^{k-1}\sum_{j=0}^{k-1-i} \beta^{k-1-i-j} (\Delta^\frac{j}{2} u)^2   \]
     where \( C \) is independent of \( u \) and \( \beta \).
\end{lem}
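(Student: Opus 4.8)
The plan is to reduce the statement to Lemma A.6 by expanding both sides in $\beta$ via the binomial theorem, after which only combinatorial bookkeeping remains. By density it suffices to prove the estimate for $u \in C_c^\infty(M)$, as for the other lemmas in this appendix. Writing $c_{i,k}=\binom{k}{i}$ for the binomial coefficients (these are the constants referred to in Section 2) and collecting the terms of the expansion according to the total Laplacian degree $s=a+b$, one obtains, using $i_1+i_2=k$,
\[ \int_M \eta\,(\Delta+\beta)^{i_1}u\,(\Delta+\beta)^{i_2}u = \sum_{s=0}^{k} \beta^{\,k-s} \sum_{a+b=s} \binom{i_1}{a}\binom{i_2}{b} \int_M \eta\,\Delta^a u\,\Delta^b u , \]
where the inner sums are taken over all $a,b\ge 0$ with the usual convention $\binom{p}{q}=0$ unless $0\le q\le p$, and the analogous identity holds with $(j_1,j_2)$ in place of $(i_1,i_2)$.

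The key observation is that for each fixed $s$ the total binomial weight carried by the degree-$s$ terms is the same on the two sides, by Vandermonde's identity:
\[ \sum_{a+b=s}\binom{i_1}{a}\binom{i_2}{b} = \binom{i_1+i_2}{s} = \binom{k}{s} = \sum_{a+b=s}\binom{j_1}{a}\binom{j_2}{b} . \]
Applying Lemma A.6 with the same $r$ and with $s$ playing the role of its ``$k$'', I would replace each integral $\int_M \eta\,\Delta^a u\,\Delta^b u$ with $a+b=s$ by the single canonical quantity $\int_M \eta\,(\Delta^{s/2}u)^2$, at the cost of an error whose absolute value is at most $C\sum_{i=0}^{s-1}\int_{B_r(supp(\eta))}(\Delta^{i/2}u)^2$; here $C$ depends only on $n$, $k$, $r$, and bounds on $R$ and $\eta$ together with finitely many of their derivatives, and in particular it is independent of $u$ and of $\beta$ since Lemma A.6 contains no $\beta$ and $s\le k$. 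Subtracting the expansion for $(j_1,j_2)$ from that for $(i_1,i_2)$, the principal terms $\binom{k}{s}\beta^{k-s}\int_M \eta\,(\Delta^{s/2}u)^2$ cancel identically for every $s$ (including $s=k$, where the only surviving products are $\Delta^{i_1}u\,\Delta^{i_2}u$ and $\Delta^{j_1}u\,\Delta^{j_2}u$), and what is left is a sum of Lemma A.6 error terms weighted by binomial coefficients and by the powers $\beta^{k-s}$.

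It then remains to bound this residual, in absolute value, by
\[ C\sum_{s=1}^{k}\binom{k}{s}\,\beta^{\,k-s}\sum_{i=0}^{s-1}\int_{B_r(supp(\eta))}(\Delta^{i/2}u)^2 , \]
the $s=0$ contribution being empty, and to match this to the right-hand side of the statement. Interchanging the order of summation so that $i$ runs from $0$ to $k-1$ and $s$ from $i+1$ to $k$, and then substituting $p=k-s$, this becomes $C\int_{B_r(supp(\eta))}\sum_{i=0}^{k-1}(\Delta^{i/2}u)^2\sum_{p=0}^{k-1-i}\binom{k}{p}\beta^{\,p}$, and absorbing the bounded factors $\binom{k}{p}$ into $C$ one recognises exactly $C\int_{B_r(supp(\eta))}\sum_{i=0}^{k-1}\sum_{j=0}^{k-1-i}\beta^{\,k-1-i-j}(\Delta^{j/2}u)^2$. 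Since this bounds the absolute value of the difference of the two integrals, the one-sided inequality of the statement follows a fortiori. I do not anticipate a real obstacle: all of the analytic content is already packaged in Lemma A.6, and the only points requiring care are the combinatorial bookkeeping --- in particular invoking Vandermonde's identity to see that the seemingly leftover principal terms in fact cancel, and verifying that the resulting error sum matches the precise double sum on the right-hand side.
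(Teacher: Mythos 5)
Your proof is correct, and it takes a genuinely different route from the paper's. The paper proves the lemma by an iterated one-step reduction: it first establishes the boundary case $(k_1,k_2)=(0,k)$ by peeling off one $\Delta$ and applying Lemma A.6, then handles general $(k_1,k_2)$ by substituting $(\Delta+\beta)^{k_1}u$ in place of $u$ in that boundary case, which produces remainder terms of the form $\beta^{k_2-k_1-1-i-j}\int(\Delta^{j/2}(\Delta+\beta)^{k_1}u)^2$ that then have to be expanded again and reindexed (twice, via \( (\ref{5.r}) \)) before they fit the right-hand side. You instead expand both sides in $\beta$ by the binomial theorem, group by total Laplacian degree $s=a+b$, canonicalize each $\int_M\eta\,\Delta^a u\,\Delta^b u$ to $\int_M\eta\,(\Delta^{s/2}u)^2$ by one application of Lemma A.6, and invoke Vandermonde's identity $\sum_{a+b=s}\binom{i_1}{a}\binom{i_2}{b}=\binom{k}{s}=\sum_{a+b=s}\binom{j_1}{a}\binom{j_2}{b}$ to see that the canonicalized principal terms match term-by-term in $s$ and cancel in the difference; the remaining error sum then reorganizes into the stated double sum directly. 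The argument is correct (I checked that the $s=k$ term contributes only $\int\eta\,\Delta^{i_1}u\,\Delta^{i_2}u$ on each side, and that your reindexing $\sum_{s=1}^k\beta^{k-s}\sum_{i=0}^{s-1}X_i=\sum_{i,j\ge0,\,i+j\le k-1}\beta^{k-1-i-j}X_j$ is exactly the target double sum). What your route buys is a single-pass, non-inductive reduction to Lemma A.6 with the combinatorial cancellation made explicit by Vandermonde, rather than hidden in a nested induction; it also delivers a two-sided (absolute-value) estimate, which is strictly stronger than the one-sided inequality the lemma actually asserts. The only point worth making explicit in a write-up is that for odd $s$ the quantity $\int_M\eta\,(\Delta^{s/2}u)^2$ is to be read as $\int_M\eta\,|\nabla\Delta^{(s-1)/2}u|^2$, exactly as permitted by the half-integer hypothesis of Lemma A.6, and that the constant is a maximum of finitely many Lemma A.6 constants (over $s\le k$) multiplied by a bounded number of bounded binomials, hence still independent of $u$ and $\beta$.
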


\begin{proof}
We begin by stating a simple but useful reindexing identity. Let \( a_i, b_i \) be real numbers for \( 0 \leq i \leq k \). Then
\begin{equation}\label{5.r} \sum_{i=0}^k \sum_{j=0}^{k-i} a_{k-i-j} b_j = \sum_{i=0}^k a_{k-i} \sum_{j=0}^i b_j  \end{equation}
To prove this we use the substitution \( i'=i-j\) to obtain
\[  \sum_{i=0}^k a_{k-i} \sum_{j=0}^i b_j = \sum_{i=0}^k \sum_{j=0}^k a_{k-i} b_j \mathbbm{1}_{j\leq i} = \sum_{j=0}^k \sum_{i=0}^k a_{k-i} b_j \mathbbm{1}_{j\leq i}= \sum_{j=0}^k \sum_{i'=0}^{k-j} a_{k-i'-j} b_j =  \sum_{i'=0}^k \sum_{j=0}^{k-i'} a_{k-i'-j} b_j \]

 Now to prove Lemma A.7, we will show for all \( k_1, k_2 \) such that \( k_1 + k_2 = k \) and \( k_1 < k_2 \)
    \begin{equation}\label{5.1} \left|\int_M \eta (\Delta + \beta)^{k_1} u (\Delta + \beta)^{k_2} -  \int_M \eta (\Delta + \beta)^{k_1+1} u (\Delta + \beta)^{k_2-1}\right| \leq C \int_{B_r(supp(\eta))} \sum_{i=0}^{k-1}\sum_{j=0}^{k-1-i} \beta^{k-1-i-j} (\Delta^\frac{j}{2} u)^2 \end{equation}
    and full statement immediately follows by a simple induction. First we prove the case of \( (\ref{5.1})\) where \( k_1 = 0, k_2 = k \). We expand, defining \( c_{i, k} \) such that \( (\Delta+\beta)^k = \sum_{i=0}^kc_{i, k}\beta^{k-i} \Delta^i u \) to get
    \begin{align*}
    \int_M \eta u (\Delta + \beta)^k u & = \int_M \eta u \Delta((\Delta + \beta)^{k-1} u) + \int_M \beta \eta u (\Delta + \beta)^{k-1} u)  \\
    &= \int_M \eta u \Delta\left(\sum_{i=0}^{k-1} c_{i,k-1} \beta^{k-1-i}\Delta^i u \right) + \int_M \beta \eta u (\Delta + \beta)^{k-1} u \\
    &= \int_M \sum_{i=0}^{k-1} c_{i,k-1}  \beta^{k-1-i} \eta u  \Delta^{i+1} u  + \int_M \beta \eta u (\Delta + \beta)^{k-1} u \\
    \end{align*}
    We then apply Lemma A.6 to each term and reindex with \( (\ref{5.r}) \) to obtain
    \begin{align*}
    \int_M \sum_{i=0}^{k-1} c_{i,k-1}  \beta^{k-1-i} \eta u  \Delta^{i+1} u  + \int_M \beta \eta u (\Delta + \beta)^{k-1} u &\leq  \int_M \eta \Delta u \sum_{i=0}^{k-1} c_{i,k-1} \beta^{k-1-i} \Delta^i u +  \beta \eta u (\Delta + \beta)^{k-1} u  \\
    & \quad\quad\quad\quad\quad + C\int_{B_r(supp(\eta))} \sum_{i=0}^{k-1} \beta^{k-1-i} \sum_{j=0}^i (\Delta^\frac{j}{2})^2 u \\
    &=\int_M \eta (\Delta + \beta) u (\Delta + \beta)^{k-1} u \\
    & \quad\quad\quad\quad+ C \int_{B_r(supp(\eta))} \sum_{i=0}^{k-1}\sum_{j=0}^{k-1-i} \beta^{k-1-i-j} (\Delta^\frac{j}{2} u)^2
    \end{align*}

    Now let \( k_1, k_2 \) be such that \( k_1 + k_2 = k \) and assume \( k_1 < k_2 \). 
    We write 
    \[ \int_M \eta (\Delta + \beta)^{k_1} u (\Delta + \beta)^{k_2}u = \int_M \eta (\Delta + \beta)^{k_1} u (\Delta + \beta)^{k_2-k_1} (\Delta + \beta)^{k_1} u \]
    and apply the previous special case replacing \( u \) with \( (\Delta + \beta)^{k_1} u\) and \( r \) with \( \frac{r}{3} \). We obtain
    \begin{align} \int_M \eta (\Delta + \beta)^{k_1} u (\Delta + \beta)^{k_2}u &\leq \int_M \eta (\Delta + \beta)^{k_1+1} u (\Delta + \beta)^{k_2-1} \notag \\
    & \quad \quad\quad\quad\quad+ C \sum_{i=0}^{k_2-k_1-1}\sum_{j=0}^{k_2-k_1-1-i} \beta^{k_2-k_1-1-i-j} \int_{B_\frac{r}{3}(supp(\eta))} (\Delta^\frac{j}{2} (\Delta + \beta)^{k_1} u )^2 \label{1.5.end}  \end{align}
Let \( \eta_1 \) be a smooth function equal to \( 1 \) on \( B_\frac{r}{3}(supp(\eta)) \) and equal to \( 0 \) on \( M \setminus B_\frac{2r}{3}(supp(\eta)) \) We fix an arbitrary \( i \) and \( j \) and consider \( \beta^{k_2-k_1-1-i-j} \int_M \eta_1 (\Delta^\frac{j}{2} (\Delta + \beta)^{k_1} u )^2\). We expand
\begin{align} \int_M \eta_1 (\Delta^\frac{j}{2} (\Delta + \beta)^{k_1} u )^2 &= \int_M \eta_1 \left(\sum_{l=0}^{k_1} c_{l,k_1} \beta^{k_1-l} \Delta^{l + \frac{j}{2}}u \right)^2 \notag \\
&=  \sum_{0 \leq l_1, l_2 \leq k_1} \int_M \eta_1 c_{l_1, k_1}c_{l_2, k_1} \beta^{2k_1-l_1-l_2}\Delta^{l_1+\frac{j}{2}}u\Delta^{l_2+\frac{j}{2}}u \label{lemma7eq1} \end{align}
We rewrite this sum as
\begin{equation}\label{6.2}   \sum_{0 \leq l_1, l_2 \leq k_1} \int_M \eta_1 c_{l_1, k_1}c_{l_2, k_1} \beta^{2k_1-l_1-l_2}\Delta^{l_1+\frac{j}{2}}u\Delta^{l_2+\frac{j}{2}}u =   \sum_{l=0}^{2k_1} \sum_{\substack{0 \leq l_1, l_2 \leq k_1 \\ l_1+l_2=l}} \int_M \eta_1 c_{l_1, k_1}c_{l_2, k_1} \beta^{2k_1-l}\Delta^{l_1+\frac{j}{2}}u\Delta^{l_2+\frac{j}{2}}u  \end{equation} 

Fixing some \( l, l_1, l_2 \) such that \( l_1 + l_2 = l \) and applying Lemma A.6 with \( \eta_1 \) replacing \( \eta \), and \( \frac{r}{3} \)  replacing \( r \) we have
\begin{align*}
   \int_M \eta_1 c_{l_1, k_1}c_{l_2, k_1} \beta^{2k_1-l}\Delta^{l_1+\frac{j}{2}}u   \Delta^{l_2+\frac{j}{2}}u    &\leq  c_{l_1, k_1}c_{l_2, k_1} \beta^{2k_1-l}\left(\int_M \eta_1 \Delta^{\frac{j+l}{2}}u\Delta^{\frac{j+l}{2}}u + C\sum_{p=0}^{l+j-1} \int_{B_r(supp(\eta))} (\Delta^\frac{p}{2} u)^2 \right) \\
&\leq C \beta^{2k_1-l} \sum_{p=0}^{l+j} \int_{B_r(supp(\eta))} (\Delta^\frac{p}{2} u)^2  \\
\end{align*}

Applying this inequality in the right hand side of \( (\ref{6.2}) \) and using the fact that \( k_2 - k_1 + 2k_1 = k \) we obtain
\begin{equation}\label{lemma7eq2} \beta^{k_2-k_1-1-i-j} \sum_{l=0}^{2k_1} \sum_{\substack{0 \leq l_1, l_2 \leq k_1\\ l_1+l_2=l}} \int_M \eta_1 c_{l_1, k_1}c_{l_2, k_1} \beta^{2k_1-l}\Delta^{l_1+\frac{j}{2}}u\Delta^{l_2+\frac{j}{2}}u   \leq C  \sum_{l=0}^{2k_1} \beta^{k-1-i-j-l}\sum_{p=0}^{l+j} \int_{B_r(supp(\eta))} (\Delta^\frac{p}{2} u)^2  \end{equation}

Then substituting \( l+j \) with \(l' \), using the fact that \( 2k_1 + j \leq k - 1 - i \), and applying \( (\ref{5.r}) \) we obtain
\begin{align} \sum_{l=0}^{2k_1} \beta^{k-1-i-j-l}\sum_{p=0}^{l+j} \int_{B_r(supp(\eta))} (\Delta^\frac{p}{2} u)^2 &= \sum_{l'=j}^{2k_1+j} \beta^{k-1-i-l'}\sum_{p=0}^{l'}\int_{B_r(supp(\eta))} (\Delta^\frac{p}{2} u)^2 \notag \\
 &\leq   \sum_{l'=0}^{k-1-i} \beta^{k-1-i-l'}\sum_{p=0}^{l'} \int_{B_r(supp(\eta))}(\Delta^\frac{p}{2} u)^2 \notag \\ 
&= \sum_{l'=0}^{k-1-i}\sum_{p=0}^{k-1-i-l'} \beta^{k-1-i-l'-p} \int_{B_r(supp(\eta))}(\Delta^\frac{p}{2}u)^2. \label{lemma7eq3}
\end{align}
Then by substituting \( l'+i \) with \( l'' \) we obtain
\begin{align} \sum_{l'=0}^{k-1-i}\sum_{p=0}^{k-1-i-l'} \beta^{k-1-i-l'-p} \int_{B_r(supp(\eta))}(\Delta^\frac{p}{2}u)^2 &= \sum_{l''=i}^{k-1}\sum_{p=0}^{k-1-l''} \beta^{k-1-l''-p} \int_{B_r(supp(\eta))}(\Delta^\frac{p}{2}u)^2 \notag \\
&\leq \sum_{l''=0}^{k-1}\sum_{p=0}^{k-1-l''} \beta^{k-1-l''-p} \int_{B_r(supp(\eta))}(\Delta^\frac{p}{2}u)^2. \label{lemma7eq4} \end{align}
Therefore, putting together \( (\ref{lemma7eq1}), (\ref{lemma7eq2}), (\ref{lemma7eq3}), (\ref{lemma7eq4})\) we obtain \[\beta^{k_2-k_1-1-i-j} \int_M \eta_1 (\Delta^\frac{j}{2} (\Delta + \beta)^{k_1} u )^2 \leq \sum_{l''=0}^{k-1}\sum_{p=0}^{k-1-l''} \beta^{k-1-l''-p} \int_{B_r(supp(\eta))}(\Delta^\frac{p}{2}u)^2 \]
Becuase \( i, j \) were arbitrary, we apply this bound to each term of \( (\ref{1.5.end}) \) to conclude
\[ \sum_{i=0}^{k_2-k_1-1}\sum_{j=0}^{k_2-k_1-1-i} \beta^{k_2-k_1-1-i-j} \int_{B_\frac{r}{3}(supp(\eta))} (\Delta^\frac{j}{2} (\Delta + \beta)^{k_1} u )^2 \leq C \int_{B_r(supp(\eta))}  \sum_{i=0}^{k-1}\sum_{j=0}^{k-1-i} \beta^{k-1-i-j} (\Delta^\frac{j}{2} u)^2 \]
and we are done.
\end{proof}

\begin{lem}
   Let \( k \geq 1 \) and \( r > 0 \) be arbitrary and let \( \eta \) be a smooth compactly supported function. Let \( i_1, i_2 \) and \( j_1, j_2 \) be such that \( i_1 + i_2 = j_1 + j_2 = k \). Additionally, for each \( 0 \leq l \leq k-1 \) let \( p_l, q_l \) be an arbitrary pair of nonnegative integers satisfying \( p_l + q_l = l \). Then there exists \( C \) based on \( n, k, r \), bounds for \( R \) and finitely many of its derivatives, and bounds for \( \eta \) and finitely many of its derivatives such that for all \( u \in H_k^2(M) \) and \( \beta \geq 0 \) such that \( (\Delta + \beta)^l u \geq 0 \) for all \( 0 \leq l \leq k \),

    \[  \int_M \eta (\Delta + \beta)^{i_1}u (\Delta + \beta)^{i_2} u \leq \int_M \eta (\Delta + \beta)^{j_1}u (\Delta + \beta)^{j_2} u + C \sum_{l=0}^{k-1} \int_{B_r(supp(\eta))} (\Delta + \beta)^{p_l} u (\Delta + \beta)^{q_l} u \]
\end{lem}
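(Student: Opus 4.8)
The plan is to deduce Lemma A.8 from Lemma A.7 by translating the error term of the latter into the required form, the new ingredient being the sign hypothesis $(\Delta+\beta)^l u\geq 0$. We argue by induction on $k$ (the cases $k=0,1$ are trivial, since then the two left-hand products coincide). Lemma A.7, applied with $r/2$ in place of $r$, already gives the inequality of Lemma A.8 with the error term replaced by
\[ E \;=\; C\int_{B_{r/2}(supp(\eta))}\sum_{i=0}^{k-1}\sum_{j=0}^{k-1-i}\beta^{\,k-1-i-j}\,(\Delta^{\frac{j}{2}}u)^2 , \]
so it suffices to bound $E$ by $C\sum_{l=0}^{k-1}\int_{B_{r}(supp(\eta))}(\Delta+\beta)^{p_l}u\,(\Delta+\beta)^{q_l}u$. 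Write $v^{(l)}=(\Delta+\beta)^l u$, so all $v^{(l)}\geq 0$ for $0\leq l\leq k$, and assign to a quantity of the shape $\beta^{s}\int_M\zeta\,(\Delta^{\frac{j}{2}}u)^2$ (with $0\leq\zeta\leq 1$ a smooth cutoff) or $\int_M\zeta\, v^{(a)}v^{(b)}$ the \emph{total order} $s+j$, resp.\ $a+b$; after inserting a cutoff equal to $1$ on $B_{r/2}(supp(\eta))$, every summand of $E$ is of the first shape and has total order $k-1-i\leq k-1$.

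The key step is the following claim, proved by induction on the total order $t=0,1,\dots,k-1$: for every nonnegative cutoff $\zeta$ and every $\rho>0$ there is a constant $C$, depending only on $n$, $k$, $\rho$, and bounds on $R$ and on $\zeta$ and finitely many of their derivatives, independent of $u$ and $\beta$, such that every quantity of total order $\leq t$ of the two shapes above is $\leq C\sum_{l=0}^{t}\int_{B_{\rho}(supp(\zeta))}v^{(p_l)}v^{(q_l)}$. For a term $\beta^{s}\int_M\zeta\,(\Delta^{\frac{j}{2}}u)^2$ with $s+j=t$, expand $(\Delta+\beta)^t$ binomially and apply Lemma A.6 to each $\int_M\zeta\,u\,\Delta^a u$ to obtain
\[ \sum_{a=0}^{t}\binom{t}{a}\beta^{\,t-a}\!\int_M\zeta\,(\Delta^{\frac{a}{2}}u)^2 \;=\; \int_M\zeta\,u\,v^{(t)} \;+\; \mathcal{R}, \]
where $|\mathcal{R}|$ is a positive combination of terms of total order $\leq t-1$. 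Since $\zeta\geq 0$ and each square is nonnegative, the single summand $\binom{t}{j}\beta^{s}\int_M\zeta\,(\Delta^{\frac{j}{2}}u)^2$ (the term $a=j$) is dominated by the whole sum, hence by $\int_M\zeta\,u\,v^{(t)}+|\mathcal{R}|$. Now $\int_M\zeta\,u\,v^{(t)}=\int_M\zeta\,v^{(0)}v^{(t)}$, and since $t<k$ the outer induction hypothesis (Lemma A.8 at level $t$, with splittings $(0,t)$ and $(p_t,q_t)$) bounds this by $\int_M\zeta\,v^{(p_t)}v^{(q_t)}$ plus terms of total order $\leq t-1$; using $v^{(p_t)},v^{(q_t)}\geq 0$ and $\zeta\leq 1$, the first of these is $\leq\int_{B_\rho(supp(\zeta))}v^{(p_t)}v^{(q_t)}$. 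A term $\int_M\zeta\,v^{(a)}v^{(b)}$ with $a+b\leq t$ is reduced the same way, via Lemma A.8 at level $a+b\leq t<k$. In every case the residual terms have total order $\leq t-1$ and are absorbed by the total-order induction hypothesis; the base case $t=0$ is immediate since then the only term is $\int_M\zeta\,u^2=\int_M\zeta\,v^{(0)}v^{(0)}$. Applying the claim to each summand of $E$ completes the proof, the finitely many radius losses incurred by Lemmas A.6 and A.8 being absorbed into a fixed finite chain of nested cutoffs by choosing the innermost radius a small enough fraction of $r$.

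The argument is largely bookkeeping: one must lay out the double induction — outer on $k$ through Lemma A.8 at lower levels, inner on the total order $t$ — so that it is manifestly well founded, and check that each error term produced by Lemmas A.6 and A.8 strictly decreases the total order, so the inner induction closes. The one genuinely substantive point, and the place where the hypothesis $(\Delta+\beta)^l u\geq 0$ is essential, is the passage from a $\beta$-weighted square $\beta^{s}\int_M\zeta\,(\Delta^{\frac{j}{2}}u)^2$ to the mixed product $\int_M\zeta\,u\,v^{(t)}=\int_M\zeta\,v^{(0)}v^{(t)}$: it is precisely the positivity of the $v^{(l)}$ that lets a single weighted square be controlled by the full positive binomial combination.
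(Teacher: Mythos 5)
Your proposal is correct in substance and reaches the same conclusion, but the route differs from the paper's. Both proofs begin identically: reduce via Lemma A.7 to controlling the $\beta$-weighted squares $\beta^{k-1-i-j}\int \eta_1 (\Delta^{\frac{j}{2}}u)^2$. The paper then runs a \emph{single} induction (on the sum index $k$) within an auxiliary claim: it splits off the $i=0$ block, expands $\int_{B}(\Delta+\beta)^{p_{k+1}}u(\Delta+\beta)^{q_{k+1}}u \geq \int\eta_1(\Delta+\beta)^{p_{k+1}}u(\Delta+\beta)^{q_{k+1}}u$ binomially, applies Lemma A.6 termwise, and rearranges. Its only ingredients are Lemmas A.6, A.7, and positivity; it does \emph{not} invoke Lemma A.8 at lower levels. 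You instead run a double induction — outer on $k$, inner on a ``total order'' grading — and within the inner step you call on Lemma A.8 itself at levels $t<k$ to trade $\int\zeta\,v^{(0)}v^{(t)}$ for $\int\zeta\,v^{(p_t)}v^{(q_t)}$. This is logically sound (the outer induction closes, since only strictly smaller $k$'s of the lemma are invoked), but it is less economical than the paper's self-contained argument; note that the same trade could already be done with Lemma A.7 alone, so the appeal to Lemma A.8 at lower levels is not actually needed. Your bookkeeping of the finitely many radius losses through a nested cutoff chain is fine and matches the paper's spirit.

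One imprecision worth flagging in your closing discussion: the step ``single weighted square is controlled by the full positive binomial combination'' uses only $\zeta\geq 0$ (each summand $\beta^{t-a}\int\zeta(\Delta^{a/2}u)^2$ is nonnegative because $\zeta\geq 0$, not because of the sign of $v^{(l)}$). The hypothesis $(\Delta+\beta)^l u\geq 0$ is actually used where you drop the cutoff, i.e.\ in the inequality $\int_M\zeta\,v^{(p_t)}v^{(q_t)}\leq\int_{B_\rho(supp(\zeta))}v^{(p_t)}v^{(q_t)}$ (and implicitly again whenever you invoke Lemma A.8 at lower levels, whose statement requires it). You apply the positivity correctly inside the argument, but the summary sentence misattributes where it bites.
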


\begin{proof}
Let \( \eta \in C_c^\infty(M) \), \( k \geq 0 \), \( p_l, q_l \) such that \( p_l + q_l = l \) for \( 0 \leq l \leq k \), \( r > 0 \) be arbitrary. Let \( \eta_1 \) be a function such that \( \eta_1 = 1 \) on \( B_\frac{r}{3}(supp(\eta)) \) and \( \eta_1 = 0 \) on \( M \setminus B_{\frac{2}{3}r} (supp(\eta)) \). By applying Lemma A.7 with \( \frac{r}{3} \) replacing \( r \) we obtain
\begin{align*} \left| \int_M \eta (\Delta + \beta)^{i_1}u (\Delta + \beta)^{i_2} u - \int_M \eta (\Delta + \beta)^{j_1}u (\Delta + \beta)^{j_2} u \right| &\leq C \int_{B_\frac{r}{3} (supp(\eta))} \sum_{i=0}^{k-1}\sum_{j=0}^{k-1-i} \beta^{k-1-i-j} (\Delta^\frac{j}{2} u)^2  \\
& \leq C\int_M \sum_{i=0}^{k-1}\sum_{j=0}^{k-1-i} \eta_1 \beta^{k-1-i-j} (\Delta^\frac{j}{2} u)^2 
\end{align*}
It therefore suffices to prove for \( k \geq 0 \), \( \eta_1 \in C_c^\infty(M) \), \( s > 0 \) there exists \( C \) such that
\[ \int_M \sum_{i=0}^{k}\sum_{j=0}^{k-i} \eta_1 \beta^{k-i-j} (\Delta^\frac{j}{2} u)^2 \leq C \sum_{l=0}^{k} \int_{B_s(supp(\eta_1))} (\Delta + \beta)^{p_l} u (\Delta + \beta)^{q_l} u  \]
and letting \( s = \frac{r}{3} \) will allow us to conclude.

 The base case \( k=0 \) is immediate. Assume the statement is true for some \( k \) and let \( p_l, q_l \) satisfy \( p_l + q_l = l \) for \( 1 \leq l \leq k+1 \). Then we have after substituting \( i' = i-1\)
 \begin{align} \int_M \sum_{i=0}^{k+1}\sum_{j=0}^{k+1-i} \eta_1 \beta^{k+1-i-j} (\Delta^\frac{j}{2} u)^2 &= \int_M \sum_{j=0}^{k+1} \eta_1 \beta^{k+1-j} (\Delta^\frac{j}{2}u)^2 + \sum_{i=1}^{k+1}\sum_{j=0}^{k+1-i} \eta_1 \beta^{k+1-i-j} (\Delta^\frac{j}{2} u)^2 \notag \\
 &= \int_M \sum_{j=0}^{k+1} \eta_1 \beta^{k+1-j} (\Delta^\frac{j}{2}u)^2 + \sum_{i'=0}^{k}\sum_{j=0}^{k-i'} \eta_1 \beta^{k-i'-j} (\Delta^\frac{j}{2} u)^2 \label{lemma8} 
 \end{align}
By the induction hypothesis  \[ \sum_{i'=0}^{k}\sum_{j=0}^{k-i'} \int_M \eta_1 \beta^{k-i'-j} (\Delta^\frac{j}{2} u)^2 \leq C \sum_{l=0}^{k} \int_{B_s (supp(\eta_1))} (\Delta + \beta)^{p_l} u (\Delta + \beta)^{q_l} u .\]
 Now to bound the the first term in \( (\ref{lemma8})\), using the positivity assumption on \( (\Delta + \beta)^l u \) we expand
 \begin{align*} 
 \int_{B_\frac{s}{3}(supp(\eta_1))} (\Delta + \beta)^{p_{k+1}} u (\Delta + \beta)^{q_{k+1}} u &\geq \int_M \eta_1 (\Delta + \beta)^{p_{k+1}} u (\Delta + \beta)^{q_{k+1}} u \\
 &= \int_M \eta_1 \left( \sum_{l=0}^{p_{k+1}} c_{l,p_{k+1}} \beta^{p_{k+1}-l}  \Delta^l u\right)\left( \sum_{l=0}^{q_{k+1}} c_{l,q_{k+1}} \beta^{q_{k+1}-l} \Delta^l u \right) \\
 &= \sum_{l=0}^{k+1} \sum_{\substack{0\leq l_1 \leq p_{k+1} \\ \,0\leq l_2 \leq q_{k+1} \\  l_1 + l_2 = l}} \int_M \eta_1 c_{l_1,p_{k+1}}c_{l_2,q_{k+1}}  \beta^{k+1-l}  \Delta^{l_1} u   \Delta^{l_2} u dv_g
 \end{align*}
 Then applying Lemma A.6 to each term we obtain
\[ \sum_{l=0}^{k+1}  \sum_{\substack{0\leq l_1 \leq p_{k+1} \\ \,0\leq l_2 \leq q_{k+1} \\  l_1 + l_2 = l}} \int_M \eta_1 c_{l_1,p_{k+1}}c_{l_2,q_{k+1}}  \beta^{k+1-l}  \Delta^{l_1} u   \Delta^{l_2} u  \geq \sum_{l=0}^{k+1} d_{l,k+1}   \beta^{k+1-l} \left(  \int_M \eta_1(\Delta^\frac{l}{2} u)^2 - C\sum_{p=0}^{l-1} \int_{B_\frac{s}{3}(supp(\eta_1))} (\Delta^\frac{p}{2} u)^2\right) \]
where \[ d_{l,k+1} =  \sum_{\substack{0\leq l_1 \leq p_{k+1} \\ 0\leq l_2 \leq q_{k+1} \\  l_1 + l_2 = l}} c_{l_1,p_{k+1}}c_{l_2, q_{k+1}} \geq 1. \] 
We therefore obtain
\[ \int_{B_\frac{s}{3}(supp(\eta_1))} (\Delta + \beta)^{p_{k+1}} u (\Delta + \beta)^{q_{k+1}}  \geq \sum_{l=0}^{k+1} \beta^{k+1-l} \left(  \int_M \eta_1(\Delta^\frac{l}{2} u)^2 - C\sum_{p=0}^{l-1} \int_{B_\frac{s}{3}(supp(\eta_1))} (\Delta^\frac{p}{2} u)^2\right) . \]
Adding the remainder terms onto the left hand side and substituting \( l' = l-1\) results in
\[ \sum_{l=0}^{k+1}\beta^{k+1-l}\int_M \eta_1 (\Delta^\frac{l}{2} u)^2 \leq \int_{B_\frac{s}{3}(supp(\eta_1))} (\Delta + \beta)^{p_{k+1}} u (\Delta + \beta)^{q_{k+1}} u + C \sum_{l'=0}^{k}   \beta^{k-l'} \sum_{p=0}^{l'} \int_{B_\frac{s}{3}(supp(\eta_1))} (\Delta^\frac{p}{2} u)^2 \]

Let \( \eta_2 \) be such that \( \eta_2 = 1 \) on \( B_{\frac{s}{3}}(supp(\eta_1)) \) and \( \eta_2 = 0 \) on \( M \setminus B_{\frac{2}{3}s} (supp(\eta_1)) \). By \( (\ref{5.r}) \) and the induction hypothesis (applied to \( \eta_2 \) and \( \frac{s}{3} \)) we continue

\begin{align*}
    &\int_{B_\frac{s}{3}(supp(\eta_1))} (\Delta +\beta)^{p_{k+1}} u (\Delta + \beta)^{q_{k+1}} u + C \sum_{l'=0}^{k}   \beta^{k-l'} \sum_{p=0}^{l'} \int_M \eta_2 (\Delta^\frac{p}{2} u)^2 \\
    = &\int_{B_\frac{s}{3}(supp(\eta_1))} (\Delta + \beta)^{p_{k+1}} u (\Delta + \beta)^{q_{k+1}} u + C \sum_{l'=0}^{k}  \sum_{p=0}^{k-l'}   \int_M \beta^{k-l'-p}  \eta_2 (\Delta^\frac{p}{2}u)^2 \\
    \leq &\int_{B_s(supp(\eta_1))} (\Delta + \beta)^{p_{k+1}} u (\Delta + \beta)^{q_{k+1}} u +C \sum_{l=0}^{k} \int_{B_s(supp(\eta_1))} (\Delta + \beta)^{p_l} u (\Delta + \beta)^{q_l} u
\end{align*}
and the claim is proven.

\end{proof}

\begin{lem}

Let \( k \geq 0 \) be an integer and \( u \in H_k^2(M) \) be supported inside a geodesic ball \( B \subset M \) centered around \( x_0 \in M \). Let \( \xi \) represent the Euclidean metric on \( B \) defined through a geodesic normal coordinate chart and \( dx \) the corresponding Euclidean volume element.  Let \( r_g \) be the geodesic distance function to \( x_0 \). Then there exists \( C \) independent of \( u \) such that
    \[  \int_M |\nabla^k_\xi u|^2 dx \leq \int_M (1 + C r_g^2)\left(|\nabla^k_g u|^2 + C \sum_{i=0}^{k-1}|\nabla^i_g u|^2 \right) dv_g \]
\end{lem}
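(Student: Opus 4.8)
The plan is to work entirely in the fixed geodesic normal coordinate chart on $B$ and to argue by induction on $k$. The cases $k=0$ and $k=1$ are immediate: for $k=0$ the integrand is $u^2$, and for $k=1$ the differential $du$ is insensitive to the choice of connection, so in both cases the inequality reduces to the volume comparison $dx\le(1+Cr_g^2)\,dv_g$, which is a consequence of the Cartan expansion $\sqrt{\det g}=1+O(r_g^2)$. Throughout I would use three standard consequences of the Cartan expansion valid on $\overline B$ (which is compact, so all the implied constants depend only on the geometry of $\overline B$): $g_{ij}=\delta_{ij}+O(r_g^2)$ together with $g^{ij}=\delta^{ij}+O(r_g^2)$, from which $\bigl|\,|T|_\xi^2-|T|_g^2\,\bigr|\le Cr_g^2|T|_g^2$ for every coordinate array $T$; $\sqrt{\det g}=1+O(r_g^2)$, so $dx=(1+O(r_g^2))\,dv_g$; and $\Gamma^k_{ij}(g)=O(r_g)$ with all coordinate derivatives of the $\Gamma^k_{ij}(g)$ bounded.

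The first ingredient is the tensorial identity relating $\nabla_\xi^k u$ and $\nabla_g^k u$. Since $\xi$ is flat in these coordinates, the difference of the two Levi-Civita connections is the tensor $D$ with $D^k_{ij}=\Gamma^k_{ij}(g)=O(r_g)$, and iterating the Leibniz rule gives, for each $k\ge1$, a schematic identity
\[ \nabla_\xi^k u-\nabla_g^k u \;=\; D\star_k\nabla_\xi^{k-1} u \;+\;\sum_{j=0}^{k-2} A_j\star_k\nabla_\xi^{j} u , \]
where each $A_j$ is a contraction of a product of $D$ and finitely many of its covariant derivatives (hence $A_j$ and $\nabla_g A_j$ are bounded on $\overline B$), and --- this is the point --- the coefficient of the top surviving term $\nabla_\xi^{k-1}u$ carries exactly one \emph{undifferentiated} factor of $D$ and is therefore $O(r_g)$. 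I would prove this by induction on $k$: differentiating the identity at level $k-1$ and commuting $\nabla_g$ past $\nabla_\xi$, which costs one undifferentiated factor of $D$, produces the level-$k$ identity; any second factor of $D$, or any derivative landing on a $D$, lowers the order of the accompanying derivative of $u$ to at most $k-2$.

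Then I would assemble the estimate. By the array-norm comparison and the volume comparison,
\[ \int_M|\nabla_\xi^k u|_\xi^2\,dx \;\le\; \int_M (1+Cr_g^2)\,|\nabla_\xi^k u|_g^2\,dv_g , \]
and writing $F=\nabla_\xi^k u-\nabla_g^k u$ and expanding $|\nabla_\xi^k u|_g^2=|\nabla_g^k u|_g^2+2\langle\nabla_g^k u,F\rangle_g+|F|_g^2$ splits the right-hand side into three pieces. The piece $\int_M(1+Cr_g^2)|\nabla_g^k u|_g^2$ is the main term. In $|F|_g^2\le Cr_g^2|\nabla_\xi^{k-1}u|_g^2+C\sum_{j\le k-2}|\nabla_\xi^{j}u|_g^2$ every summand has order $\le k-1$; passing back to $\nabla_\xi$-free quantities via $dv_g\le C\,dx$, $|\cdot|_g\le C|\cdot|_\xi$, and then invoking the lemma at orders $<k$ bounds its integral by $C\int_M\sum_{i=0}^{k-1}|\nabla_g^i u|_g^2\,dv_g$. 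In the cross term I would use the splitting of $F$ above: the contribution of $D\star_k\nabla_\xi^{k-1}u$ is estimated by the weighted Young inequality
\[ 2(1+Cr_g^2)\,|\nabla_g^k u|_g\cdot Cr_g\,|\nabla_\xi^{k-1}u|_g \;\le\; \tfrac12 r_g^2|\nabla_g^k u|_g^2 + C|\nabla_\xi^{k-1}u|_g^2 , \]
whose first summand is absorbed into the main coefficient $1+Cr_g^2$ and whose second summand is lower order; the contributions of $A_j\star_k\nabla_\xi^{j}u$ with $j\le k-2$ would instead be integrated by parts once, writing $\nabla_g^k u=\nabla_g(\nabla_g^{k-1}u)$ and transferring the derivative onto the smooth factor $(1+Cr_g^2)A_j$ (producing an $O(r_g)$ or bounded coefficient) and onto $\nabla_\xi^{j}u$ (raising its order to at most $k-1$, since $\nabla_g$ of the array $\nabla_\xi^{j}u$ is $\nabla_\xi^{j+1}u$ plus an $O(r_g)\,\nabla_\xi^{j}u$ term coming from the Christoffel symbols); every resulting term has order $\le k-1$ and is disposed of by an ordinary Young inequality and the inductive hypothesis. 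Re-collecting the $\tfrac12 r_g^2|\nabla_g^k u|_g^2$ into $(1+Cr_g^2)|\nabla_g^k u|_g^2$ and using $1+Cr_g^2\ge1$ to slip the leftover $C\int_M\sum_{i<k}|\nabla_g^i u|_g^2$ under the factor $1+Cr_g^2$ gives the claimed inequality.

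The main obstacle is producing the \emph{clean} factor $1+Cr_g^2$ --- rather than $1+\epsilon$ with $\epsilon$ a fixed constant --- in front of the top-order term $|\nabla_g^k u|_g^2$; a naive expansion of $|\nabla_\xi^k u|_g^2$ spends an uncontrolled multiple of $|\nabla_g^k u|_g^2$ through the cross terms. The two devices above are exactly what prevents this: the $O(r_g)$ vanishing of the coefficient of $\nabla_\xi^{k-1}u$ lets the weighted Young inequality channel its error into $Cr_g^2|\nabla_g^k u|_g^2$, and the single integration by parts converts the dangerous $O(1)$ cross terms into genuinely lower-order ones before any constant is committed. The remainder of the work is bookkeeping: establishing the tensor identity by induction and checking at each step that every stray term indeed has order at most $k-1$, so that the inductive hypothesis is applicable.
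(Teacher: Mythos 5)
Your proof is correct and follows essentially the same approach as the paper: both proceed by induction on $k$, expand $\nabla_\xi^k u - \nabla_g^k u$ via the Christoffel-symbol correction identity (with the crucial $O(r_g)$ coefficient on the order-$(k-1)$ piece), use a weighted Young inequality to absorb that cross term into the $(1+Cr_g^2)$ factor, and integrate by parts once to dispose of the order-$\le k-2$ cross terms. The only differences are organizational --- you phrase the correction in schematic $\star$-notation where the paper writes explicit components $f_\alpha \partial_\alpha u + g_\beta \partial_\beta u$, you pass from $\xi$-norms to $g$-norms at the outset rather than at the end, and you integrate by parts against $dv_g$ rather than $dx$.
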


\begin{proof}
    The estimate \( dx \leq (1 + Cr_g^2) dv_g \) implies it is sufficient to prove \[  \int_M |\nabla^k_\xi u|^2 dx \leq \int_M \left( (1 + C r_g^2)|\nabla^k_g u|^2 + C \sum_{i=0}^{k-1}|\nabla^i_g u|^2  \right) dx \]

    We prove by strong induction on \( k \). The base case \( k=0 \) is immediate. Assume the statement holds for all values strictly smaller than same \( k \geq 1 \). We recall the formula for the components of the covariant derivative of a \( p \) covariant tensor \( T \) is 
\[ \nabla_i T_{i_1 \dots i_p} = \frac{\partial T_{i_1 \dots i_p}}{\partial x_i} - \sum_{k=1}^p \Gamma_{i i_k}^\alpha T_{i_1 \dots i_{k-1} \alpha i_{k+1} \dots i_p}. \]
Therefore, applying this to the \( k \)th covariant derivative we obtain
\begin{equation}\label{8.1} \partial_{i_1 \dots i_k}u = {\nabla}^{(g)}_{i_1 \dots i_k}u + \sum_{\alpha \in \{1, \dots, n\}^{k-1}} f_\alpha \partial_{\alpha} u + \sum_{j=0}^{k-2} \sum_{\beta \in \{1, \dots, n\}^{j} }g_\beta \partial_\beta u \end{equation}
where the left hand side represents the Euclidean partial derivative in a geodesic chart, \(  {\nabla}^{(g)}_{i_1 \dots i_k}u \) represents the \( i_1 \dots i_k \) component of the tensor \( {\nabla^k} u \) in the same chart, \( f_\alpha \) represents a sum of positive and negative Christoffel symbols and \( g_\beta \) is a sum of positive and negative derivatives of Christoffel symbols. We have \( |f_\alpha| \leq Cr_g \) and \( |g_\alpha| \leq C \) in our chart. We take the equation
\[ \int_M (\partial_{i_1 \dots i_k}u)^2 dx = \int_M \left(\nabla^{(g)}_{i_1 \dots i_k}u + \sum_{\alpha \in \{1, \dots, n\}^{k-1}} f_\alpha \partial_{\alpha} u + \sum_{j=0}^{k-2} \sum_{\beta \in \{1, \dots, n\}^{j} }g_\beta \partial_\beta u \right)^2 dx \] and consider the resulting terms from the right hand side.

Terms of the form \( \int_M f_{\alpha_1} f_{\alpha_2} \partial_{\alpha_1} u \partial_{\alpha_2} u dx \) where \( |\alpha_1| = |\alpha_2| = k-1 \) can be bounded by taking
\begin{align*} \int_M f_{\alpha_1} f_{\alpha_2} \partial_{\alpha_1} u \partial_{\alpha_2} u dx &\leq C \int_M \partial_{\alpha_1} u \partial_{\alpha_2} u dx \\
& \leq C \int_M |\partial_{\alpha_1} u|^2 + |\partial_{\alpha_2} u|^2 dx \\
& \leq C \int_M |\nabla^{k-1}_\xi u|^2 dx \\
& \leq C \int_M \sum_{i=0}^{k-1}|\nabla^i_g U_\alpha|^2  dx
\end{align*}
where the final inequality is by the induction hypothesis (although the full strength of the estimate is not used).
Terms of the form \( \int_M g_{\beta_1} g_{\beta_2} \partial_{\beta_1} u \partial_{\beta_2} u dx \) and \( \int_M f_\alpha g_\beta \partial_\alpha u \partial_\beta u \) can be bounded through the same argument.

For terms of the form \( \int_M g_\beta \nabla^{(g)}_{i_1 \dots i_k}u \partial_\beta u  \), we first substitute
\[ \nabla^{(g)}_{i_1 \dots i_k}u = \partial_{i_1 \dots i_k}u- \sum_{\alpha \in \{1, \dots, n\}^{k-1}} f_\alpha \partial_{\alpha} u - \sum_{j=0}^{k-2} \sum_{\beta \in \{1, \dots, n\}^{j} } g_\beta \partial_\beta u \]
After making this substitution and expanding, we only need to bound the term \( \int_M g_\beta \partial_{i_1 \dots i_k}u \partial_\beta u dx \), the others can be bounded by the above computations. Euclidean integration by parts gives
\begin{align*}  \int_M g_\beta \partial_{i_1 \dots i_k}u \partial_\beta u dx &= -\int_M \partial_{i_1} g_\beta \partial_{i_2 \dots i_k}u \partial_\beta u dx - \int_M g_\beta  \partial_{i_2 \dots i_k}u \partial_{i_1} \partial_\beta u dx \\
&\leq C \int_M |\nabla^{k-1}_\xi u|^2 dx + C\int_M |\nabla^{|\beta|}_\xi u|^2  dx + C\int_M |\nabla^{|\beta|+1}_\xi u|^2  dx.
\end{align*}
Because \( |\beta| + 1 \leq k-1 \), by the induction hypothesis \[ \int_M |\nabla^{k-1}_\xi u|^2 dx + \int_M |\nabla^{|\beta|}_\xi u|^2  dx + \int_M |\nabla^{|\beta|+1}_\xi u|^2  dx \leq C \int_M \sum_{i=0}^{k-1}|\nabla^i_g U_\alpha|^2  dx \]

For terms of the form \( \int_M f_\alpha \nabla^{(g)}_{i_1 \dots i_k}u \partial_\alpha u dx \), we calculate
\begin{align*}
    \int_M f_\alpha \nabla^{(g)}_{i_1 \dots i_k}u \partial_\alpha u &\leq C \int_M r_g \nabla^{(g)}_{i_1 \dots i_k}u \partial_\alpha u \\
    &\leq C \int_M r_g^2 (\nabla^{(g)}_{i_1 \dots i_k}u)^2 dx + C \int_M ( \partial_\alpha u)^2 dx
    \end{align*}
We have \[ \int_M ( \partial_\alpha u)^2 dx \leq C \int_M \sum_{i=0}^{k-1}|\nabla^i_g u|^2  dx \] by the induction hypothesis. Combining all of our previous computations we obtain
\[ \int_M (\partial_{i_1 \dots i_k}u)^2 dx \leq \int_M (1 + Cr_g^2)(\nabla^{(g)}_{i_1 \dots i_k}u)^2 + C \int_M \sum_{i=0}^{k-1}|\nabla^i_g u|^2 \]
To complete the proof of the lemma, we first recall \( \delta^{ij} \leq (1 + r_g^2) g^{ij} \) where \( \delta^{ij} \) represents the Kronecker delta and calculate
\begin{align*} \int_M |\nabla^k_\xi U_\alpha|^2 dx &\leq \int_M \sum_{1\leq i_1,\dots, i_k\leq n} (1 + Cr_g^2)(\nabla^{(g)}_{i_1 \dots i_k}u)^2 dx + C \int_M \sum_{i=0}^{k-1}|\nabla^i_g u|^2 \\
&=  \int_M (1 + Cr_g^2)\delta^{i_1 j_1} \dots \delta^{i_k j_k} \nabla^{(g)}_{j_1 \dots j_k}u \nabla^{(g)}_{i_1 \dots i_k}u dx + C \int_M \sum_{i=0}^{k-1}|\nabla^i_g u|^2 \\
& \leq \int_M (1 + C r_g^2)|\nabla^k_g U_\alpha|^2 + C \sum_{i=0}^{k-1}|\nabla^i_g U_\alpha|^2  dx
\end{align*}

\end{proof}

\section*{Acknowledgements}
The author would like to thank his supervisor, Jérôme Vétois, for helpful comments and discussions throughout the construction of this work.

\printbibliography

\end{document}